\newcommand{\R}{\mathbb{R}}
\newcommand{\T}{\mathbb{T}}
\newcommand{\NN}{\mathbb{N}}
\newcommand{\ZZ}{\mathbb{Z}}
\newcommand{\nn}{\mathrm{n}}
\newcommand{\bd}{\partial}
\newcommand{\mix}{\mathrm{mix}}
\newcommand{\Til}{\mathcal{T}}
\newcommand{\Qone}{\mathscr{Q}}
\newcommand{\dive}{\mathop{\mathrm{div}}}
\newcommand{\ove}[1]{\smash{\overline{#1}}}
\newcommand{\longtilde}[1]{\smash{\widetilde{#1}}}
\def\mint{\mkern12mu\hbox{\vrule height4pt depth-3.2pt width5pt}\mkern-16.5mu\int}
\numberwithin{equation}{section}
\newtheoremstyle{mytheorem}
{}% measure of space to leave above the theorem. E.g.: 3pt
{}% measure of space to leave below the theorem. E.g.: 3pt
{\it}% name of font to use in the body of the theorem
{\parindent}% measure of space to indent
{\bf}% name of head font
{.}% punctuation between head and body
{ }% space after theorem head; " " = normal interword space
{\thmnumber{#2.~}\thmname{#1}\thmnote{~\rm#3}}% Manually specify head
\newtheoremstyle{myremark}
{}% measure of space to leave above the theorem. E.g.: 3pt
{}% measure of space to leave below the theorem. E.g.: 3pt
{\rm}% name of font to use in the body of the theorem
{\parindent}% measure of space to indent
{\bf}% name of head font
{.}% punctuation between head and body
{ }% space after theorem head; " " = normal interword space
{\thmnumber{#2.~}\thmname{#1}\thmnote{~\rm#3}}% Manually specify head
\newtheoremstyle{myparagraph}
{}% measure of space to leave above the theorem. E.g.: 3pt
{}% measure of space to leave below the theorem. E.g.: 3pt
{\rm}% name of font to use in the body of the theorem
{\parindent}% measure of space to indent
{\bf}% name of head font
{.}% punctuation between head and body
{ }% space after theorem head; " " = normal interword space
{\thmnumber{#2.~}\thmname{#1}\thmnote{#3}}% Manually specify head
\theoremstyle{mytheorem}
\newtheorem{theorem}[subsection]{Theorem}
\newtheorem{lemma}[subsection]{Lemma}
\newtheorem{proposition}[subsection]{Proposition}
\theoremstyle{myremark}
\newtheorem{remark}[subsection]{Remark}
\newtheorem{definition}[subsection]{Definition}
\theoremstyle{myparagraph}
\newtheorem{parag}[subsection]{}
\newtheorem*{parag*}{}
\def\@secnumfont{\sc}
\def\section{\@startsection%
{section}	% name: section/subsection/etc.
{1}			% level: 1 for section/2 for subsection/etc.
\z@{1.5\linespacing\@plus .2\linespacing}	% vertical skip before
  {.7\linespacing}	% vertical skip after
  {\normalfont\sc\centering}}	% style
\renewenvironment{proof}[1][\proofname]{\par
  \pushQED{\qed}%
  \normalfont \topsep6\p@\@plus6\p@\relax
  \trivlist
  \itemindent\normalparindent
  \item[\hskip\labelsep
    \bfseries
    #1\@addpunct{.}]\ignorespaces
}{%
  \popQED\endtrivlist\@endpefalse
}
\providecommand{\proofname}{Proof}
\newcommand{\footnoteb}[1]{\footnote{~#1}}
\begin{document}

\title[Exponential self-similar mixing]{ Exponential self-similar mixing by incompressible flows}

\author[G. Alberti]{Giovanni Alberti}
\address{Dipartimento di Matematica,
Universit\`a di Pisa, largo Pontecorvo 5, I-56127 Pisa, Italy}
\email{giovanni.alberti@unipi.it}

\author[G. Crippa]{Gianluca Crippa}
\address{Departement Mathematik und Informatik,
Universit\"at Basel, Spiegelgasse 1, CH-4051 Basel,
Switzerland}
\email{gianluca.crippa@unibas.ch}

\author[A. L. Mazzucato]{Anna L. Mazzucato}
\address{Department of Mathematics, Penn State University, University Park, PA, 16802, U.S.A.}
\email{alm24@psu.edu}

\dedicatory{Dedicated to Alberto Bressan and Charles R.~Doering\\
on the occasion of their 60\,$^{th}$ birthday}

\date{\today}

\keywords{mixing, continuity equation, negative Sobolev norms, incompressible 
flows, self-similarity, potentials, regular Lagrangian flows.}

\subjclass[2010]{35Q35, 76F25}

\begin{abstract}
We study the problem of the optimal mixing of a passive scalar under 
the action of an incompressible flow in two space dimensions. 
The scalar solves the continuity equation with a divergence-free 
velocity field, which satisfies a bound in the Sobolev space $W^{s,p}$, 
where $s \geq 0$ and $1\leq p\leq \infty$.
The mixing properties are given in terms of a characteristic length
scale, called the mixing scale.
We consider two notions of mixing scale, one functional, expressed
in terms of the  homogeneous Sobolev norm~$\dot H^{-1}$, the other
geometric, related to rearrangements of sets.
We study rates of decay in time of both scales under self-similar
mixing. 
For the case~$s=1$ and $1 \leq p \leq \infty$ (including the case 
of Lipschitz continuous velocities, and the case of physical interest
of enstrophy-constrained flows), we present examples of velocity
fields and initial configurations for the scalar that saturate
the exponential lower bound, established in previous works, on the
time decay of both scales.
We also present several consequences for the geometry of regular
Lagrangian flows associated to Sobolev velocity fields.
\end{abstract}
\maketitle

\tableofcontents

%
%	INTRODUCTION
%

\section{Introduction}
\label{intro}
We study the problem of optimal mixing of scalar, passive tracers 
by incompressible flows.
How well a quantity transported by a flow is mixed is an important 
problem in fluid mechanics and in many applied fields, for instance
in atmospheric and oceanographic science, in biology, and in chemistry. 
In combustion, for example, fuel and air need to be well mixed for an
efficient reaction to take place. 
In many situations, the interaction between the tracer and the flow
can be neglected: mathematically, this results in the fact that the
tracer solves a linear continuity equation with a given velocity 
field (see~\eqref{e:continuity}).
This problem is also a surprisingly rich source of questions in
analysis, in particular relating partial differential equations
and dynamical systems with geometric measure theory.

\medskip

There is a well-established fluid mechanics literature concerning 
mixing and turbulence, especially with respect to statistical
properties (see e.g.~\cite{BCCLV00,GW12} and references therein).
It is known, in fact, that turbulent advection enhances mixing,
which in turn can enhance diffusion and suppress concentration
(see \cite{CKRZ} for steady ``relaxation enhancing" flows and 
\cite{KXchemo} for an application to chemotaxis, for instance). 
Enhanced dissipation occurs also in Euler flows as an effect 
of inviscid Landau damping (see \cite{BMV} and references therein).
Mixing has also long been studied in the context of chaotic 
dynamics \cite{Aref,Ottino,liverani}. 
Indeed the decay to zero of the mixing scale defined in terms of
negative Sobolev norms corresponds to ergodic mixing by the flow 
(as shown in~\cite{MMGVP}), and several well-known examples of 
discrete dynamical systems exhibit an exponential decay of
correlations, which essentially means exponential mixing (however, 
these examples cannot be easily adapted to our context).

Recently there has been a renewed interest in quantifying the 
degree of mixing under an incompressible flow, and in producing 
examples that achieve optimal mixing. On the analytic side, 
progress has been possible in part due to the development of 
new tools to study transport and continuity equations under 
non-Lipschitz velocities~\cite{DPL,Amb, HW}, in particular
quantitative estimates on regular Lagrangian flows \cite{CDL}. 
On the applied and computational side, optimal mixing has been 
approached from the point of view of homogenization and control 
with more realistic models~\cite{Liu08,FCS14}. 
Experiments have also been performed (see for 
example~\cite{GDTR09,JCT00,Jul03}).

\begin{parag}[The continuity equation]
\label{ss:cont}
We consider mixing in two space dimensions, as 2D is the first 
dimension with non-trivial, divergence-free fields and also for 
comparison with computational and experimental studies. 
Generally, dimension will not play a crucial role in what follows, 
except in setting scaling laws.
However, it is technically more difficult to construct optimal 
mixers in two space dimensions, informally speaking for topological
reasons. In fact, all our results can be extended to higher 
dimensions in a straightforward manner by making all quantities 
constant with respect to the additional independent variables. 
The divergence-free condition is a strong constraint that can be 
somewhat relaxed, but it is physically motivated in applications 
of mixing, and it is essential for the definition of the mixing 
scales we adopt.
In fact, since we aim at producing examples of optimal mixing, 
the divergence-free condition is a more restrictive requirement 
that must be satisfied in our constructions.

\medskip
We work on the two-dimensional torus $\T^2 := \R^2 / \ZZ^2$ 
or on the plane $\R^2$.
When considering the plane $\R^2$, both velocity fields and 
solutions eventually resulting from our constructions will 
be supported in a fixed compact set.

Given a divergence-free, time-dependent velocity field $u = u(t,x)$,
we consider a  scalar $\rho = \rho(t,x) \in L^\infty$ that is passively 
advected by $u$, i.e., a solution of the transport equation:
\begin{equation}\label{e:transport}
  \bd_t \rho + u \cdot \nabla \rho = 0.
\end{equation}
Under the divergence-free assumption on the velocity $u$, 
the scalar $\rho$ is also a solution of the continuity equation:
\begin{equation}
\label{e:continuity}
  \bd_t \rho + \dive (u \rho) = 0 \,.
\end{equation}
We  prescribe an initial datum $\rho(0,\cdot) = \bar \rho$ at time $t=0$.

In the following we will always assume 
that the initial datum $\bar \rho$ has integral equal to $0$.
Since the continuity equation~\eqref{e:continuity} preserves the integral 
of the solution over the spatial domain along the time evolution, 
it follows that $\rho(t,\cdot)$ has zero integral for any time $t$.
This fact is relevant when using negative Sobolev norms to measure mixing 
(see Definition~\ref{def:Functscale}, \S\ref{ss:defH}, and 
Remarks~\ref{r:fctsinH}\ref{r:fctsinH:1},~\ref{r:fctsinH}\ref{r:fctsinH:2}). 

\end{parag}

\begin{parag}[Functional and geometric mixing scales]
In order to discuss the mixing properties of solutions to the continuity
equation~\eqref{e:continuity} we need to define a notion of mixing scale 
that can quantify the ``level of mixedness'' of the solution $\rho(t,\cdot)$
at time $t$. 
At least at a formal level, the continuity equation preserves  all $L^p$
norms of the solution, which as a result are not a suitable measurement of
mixing in our setting.%
\footnoteb{In fact, $L^p$ norms of the solutions are frequently used as a
measurement of the mixing scale for solutions of advection-diffusion
equations, i.e., in the case when $\rho$ solves $\bd_t \rho + \dive(u\rho)
= \Delta \rho$. Due to the viscosity, $L^p$ norms of the solution are 
dissipated along the time evolution.}
Though, it is still possible for $\rho(t,\cdot)$ to converge to zero weakly.%
\footnoteb{Using characteristic functions of sets as test functions,
it is not difficult to prove that this will be the case for instance
if the flow of $u$ is strongly mixing in the ergodic sense.}
This is the mixing process we want to quantify and analyze in this paper. 
\medskip

We will employ and compare two notions of mixing scales that are
considered in the literature. The first one is based on a negative
Sobolev norm of the solution~$\rho(t,\cdot)$, more precisely the norm 
in the homogeneous Sobolev space  $\dot{H}^{-1}$ following~\cite{doering}
(see Definition~\ref{def:Functscale} below, and see~\S\ref{ss:defH} 
for the definition of homogeneous Sobolev norms), and will be referred 
to as the {\em functional mixing scale}.%
\footnoteb{From a mathematical point of view there is nothing special 
with the order $-1$ that has been chosen in the definition of functional 
mixing scale: every negative Sobolev norm would behave in a similar way. 
However, from a physical point of view, this choice is the most convenient,
since the norm in $\dot{H}^{-1}$ scales as a length on the two-dimensional
torus.}
In fact, it can be proven that the vanishing of the homogeneous
negative Sobolev~$\dot H^{-1}$ norm of $\rho(t,\cdot)$ is equivalent to 
the convergence of $\rho(t,\cdot)$ to $0$ weakly in $L^2$ (see for 
instance~\cite{doering}). 
The use of negative norms to measure mixing was proposed in~\cite{MMP}, 
where the equivalence between the decay of the $\dot H^{-1/2}$
norm and mixing in the ergodic sense was established.
The second mixing scale arises from a conjecture of Bressan~\cite{bressan}
on the cost of rearrangements of sets  and brings in a connection with 
geometric measure theory.
This second notion of scale is expresses in terms of how small the mean 
of the solution $\rho(t,\cdot)$  is on suitably small balls (see 
Definition~\ref{d:geomix} below), and it will be referred to as the
{\em geometric mixing scale}. 
The two scales are related though generally not equivalent.

\medskip
\emph{In the rest of this introduction, we informally denote
any of the two mixing scales of the solution $\rho$ at time $t$ by
$\mix\big( \rho(t,\cdot) \big)$}.

\medskip
Ideally, a flow that ``mixes optimally'' will achieve the largest
decay rate in time for $\mix\big( \rho(t,\cdot) \big)$.
How fast $\mix\big( \rho(t,\cdot) \big)$ can decay in time depends
on properties of the flow.
These, in turn, are in practice given in terms of constraints
on certain quantities of physical interest, typically energy, 
enstrophy, and palenstrophy.
These correspond respectively to uniform-in-time bounds on the 
$L^2$, $H^1$, and $H^2$ norms of the velocity field $u$.
\end{parag}

\begin{parag}[Main results]
\label{ss:intromain}
As described in detail in \S\ref{ss:literature} below, it has been
recently proven~\cite{CDL,ikx,seis} that the (functional or geometric) 
mixing scale can decay at most exponentially in time:
\begin{equation}
\label{e:introbound}
\mix\big( \rho(t,\cdot) \big) \geq C \exp ( -ct) \,,
\end{equation}
if the velocity field satisfies a constraint on the Sobolev 
norm $W^{1,p}$  for some $1 < p \leq \infty$, uniformly in time.
Above, $C>0$ and $c>0$ are constant depending on the initial datum
$\bar \rho$ and on the given bounds on the velocity field. 

The primary goal of this work is to show the optimality of the bound
\eqref{e:introbound} for all $1<p\leq \infty$, which  was previously
unknown (see however~\cite{zlatos} and the brief description 
in~\S\ref{ss:literature} below).
Our strongest result concerning the decay of the mixing scale can be 
stated as follows:

\smallskip
\emph{There exist a smooth, bounded, divergence-free velocity field
$u$ which is Lipschitz uniformly in time, and a smooth, bounded, nontrivial
solution $\rho$ of the continuity equation~\eqref{e:continuity} such that
the (functional or geometric) mixing scale of the solution decays exponentially
in time:
\begin{equation}\label{e:intromain}
\mix\big( \rho(t,\cdot) \big) \leq C \exp (-ct).
\end{equation}
%
%which shows that the lower bound~\eqref{e:introbound} is optimal.
}
\end{parag}

\begin{remark}\label{r:remmain}
\quad
%%%%%
\begin{enumerate}
[label=(\roman*),ref=(\roman*), 
leftmargin=0pt, itemsep=2pt, itemindent=30pt]

\item 
It is very important to keep in mind the difference between the regularity
allowed on the velocity field itself and  the regularity spaces where the
velocity satisfies bounds uniformly in time. 
In the above statement, describing our strongest result, the velocity field
is smooth in space and time.
However, the velocity is uniformly bounded in time only in the Sobolev spaces
$W^{s,p}$ with $0 \leq s \leq 1$ and $1 \leq p \leq \infty$.
If $s>1$, the velocity is bounded in $W^{s,p}$ on any finite time interval
$[0,T]$, $0<T<\infty$, 
but  the norm blows up when~$t \to \infty$.

\item 
\label{r:remmain:ii} 
The construction that leads to the main result stated above also yields 
examples of regular velocity fields and smooth solutions exhibiting different 
rates of decay for the mixing scale which depend on the uniform-in-time 
bounds for the Sobolev norms of the velocity field. 
More precisely, if we  ask that the velocity field is uniformly 
bounded in time  in the Sobolev space $W^{s,p}$ for some $s\geq 0$ 
and some $1\leq p\leq \infty$, then we can construct examples such that:
%%%%%
\begin{itemize}
[itemsep=2pt]
\item 
If $s<1$, there exists a time $t^*$ such that 
$\mix\big(\rho(t^*,\cdot)\big)=0$, that is, 
perfect mixing is achieved in finite time;
\item
If $s=1$, the mixing scale decays exponentially, that is, \eqref{e:intromain} holds;
\item
If $s>1$, the mixing scale decays polynomially, that is, there exists an exponent
$\alpha=\alpha(s)>0$ such that $\mix\big( \rho(t,\cdot) \big) \leq C t^{-\alpha}$.
\end{itemize}
\end{enumerate}
\end{remark}

Further remarks are detailed in \S\ref{ss:introself} 
and \S\ref{ss:introrem} below. The results presented in this article 
were announced in \cite{ACM}.

Before making further observations on our results and techniques we make a
digression about the past literature on this topic.

\begin{parag}[Past literature]
\label{ss:literature}
Mixing phenomena are studied in the literature under energetic
constraints on the velocity field, that is, assuming that the
velocity field is bounded with respect to some spatial norm, 
uniformly in time. 
This research area is related in a very natural way to the study
of transport and continuity equations under non-Lipschitz
velocities (see~\cite{HW} for a recent survey).
We survey key results in the literature on both areas
(most of the results hold in any space dimension):
%%%%%
\begin{enumerate}
[label=(\alph*), ref=(\alph*),
leftmargin=0pt, itemindent=30pt, itemsep=3pt]
%leftmargin=30pt, itemsep=2pt]
%\itemsep 2 pt\leftskip -10 pt
\item
The velocity field $u$ is bounded in $W^{s,p}$ uniformly in time for
some $s<1$ and $1\leq p\leq \infty$ (the case $s=0$, $p=2$, relevant 
for applications, is often referred to as energy-constrained flow).
In this case, in general there is  no uniqueness for the solution 
to the Cauchy problem for the continuity 
equation~\eqref{e:continuity} (see~\cite{ABC2,ABC1}). 
Hence, one can find a velocity field and a bounded solution which
is non-zero at the initial time, but is identically zero at some
later time.
Therefore it is possible to have perfect mixing in finite time,
as already observed in \cite{doering} and established in~\cite{llnmd}
for $s=0$, building on examples from~\cite{depauw,bressan}.

\item
The velocity field $u$ is bounded in $W^{1,p}$ uniformly in time
for some $1\leq p\leq \infty$ (the case $p=2$, relevant for 
applications, is often referred to as enstrophy-constrained flow).
The theory in \cite{DPL} guarantees uniqueness for the Cauchy 
problem~\eqref{e:continuity}, which in particular excludes perfect 
mixing in finite time. 
A quantification of the maximal decay rate for the mixing scale 
has been achieved thanks to the quantitative estimates for regular
Lagrangian flows in~\cite{CDL}.
In detail, for $p>1$, the theory in~\cite{CDL} provides an exponential
lower bound on the geometric mixing scale (see~\eqref{e:introbound}).
The extension to the borderline case $p=1$ is still open
(see, however, \cite{BC}).
The same exponential lower bound~\eqref{e:introbound}
has been proved for the functional mixing scale in~\cite{ikx,seis}.
See also ~\cite{leger, sharploss} for further results on these bounds. 
More recently, in~\cite{bresch} the authors were able to prove regularity
estimates for the solution of the continuity equation by studying the
propagation of a weighted norm of the solution, without the assumption
of bounded divergence on the velocity.

\item
The theory in~\cite{Amb} provides  uniqueness for the Cauchy 
problem~\eqref{e:continuity} for velocity fields bounded in $BV$
uniformly in time (see also~\cite{bonicatto}, in which the
divergence-free assumption is replaced by the more general
condition of near incompressibility).
Again, uniqueness  excludes perfect mixing in finite time.
The validity of the bound~\eqref{e:introbound} is still
unknown in this context.
However, in~\cite{bressan} it is observed that such an exponential
decay of the geometric mixing scale can indeed be attained for
velocity fields bounded in $BV$ uniformly in time.
The same example works also for the functional mixing scale.

\item \label{ss:literature:d}
The velocity field $u$ is bounded in $W^{s,p}$ uniformly in
time for some $s>1$ and $1\leq p\leq \infty$ (the case $s=2$
and $p=2$, relevant for applications, goes under the name of
palenstrophy-constrained flow).
In this case, Estimate~\eqref{e:introbound} gives immediately
an exponential lower bound for both mixing scales.
However, it is still open whether such a bound is sharp or not.
Numerical simulations, such as those in~\cite{llnmd,ikx},
and heuristic arguments support the optimality of the exponential decay.
(See also the discussion in~\S\ref{ss:introrem}.) 
\end{enumerate}

The  constants $C$ and $c$ in~\eqref{e:introbound} depend 
not only on the given bounds for the velocity field, but
also on the initial datum $\bar\rho$ (not simply through 
its mixing scale). 
In fact, it is not clear that an estimate of the form
\[
\mix\big(  \rho(t,\cdot) \big) 
\geq C \, \mix( \bar\rho  ) \exp (-ct),
\]
with $C$ and $c$ constants depending only on the given
bounds on the velocity field, can be achieved.
It is then natural to ask whether it is possible 
to obtain bounds on the rate of decay with constants 
that only depend on the mixing scale of the initial datum, 
and not on its geometry. Unfortunately, direct PDE methods, 
such as energy estimate, do not seem to yield sharp bounds: 
for instance, they yield a Gaussian bound for 
palenstrophy-constrained flows, while the optimal
bound is at least exponential (see~\cite{llnmd}).

\medskip

There are  examples in the literature of enstrophy-constrained 
flows that saturate the exponential decay rate complementary 
to those presented in this work~(\S\ref{ss:intromain}). 
Yao and Zlato\v s~\cite{zlatos} utilize a cellular flow to 
obtain decay of the mixing scale for any  bounded initial 
datum $\Bar{\rho}$  (where the flow depends on $\Bar\rho$), 
under a $W^{1,p}$ constraint on the velocity field for any 
$1 \leq p\leq\infty$. The decay rate is optimal in the range 
$1 < p < \bar p$ for some explicit $\bar p >2$. 
They also give an interesting result on ``unmixing" a given 
configuration.
(See also \S\ref{ss:introrem} for a comparison of our 
results with those from~\cite{zlatos}.)

\medskip
Before these recent analytic results, numerical experiments 
were performed that supported an exponential  rate of decay
for $\mix\big( \rho(t,\cdot) \big)$ under an enstrophy 
constraint.
For instance, a numerical scheme to compute an instantaneous
optimizer was given in~\cite{doering}, and numerical tests
performed for a sinusoidal initial configuration.
A global optimizer was computed numerically in~\cite{MMGVP}.

\medskip
Our approach to finding optimal mixers is constructive and
is essentially based on a self-similar scheme.
We actually present two related, but distinct constructions: 
the first one, referred to as the \emph{self-similar construction}, 
is simpler and self-similar in a strict sense.
This first construction allows us to obtain only examples where the
velocity field is neither smooth nor uniformly bounded in $W^{1,\infty}$.
The second construction, which we refer to as the 
\emph{quasi-self-similar construction}, is more involved and allows
us to construct examples where the velocity is smooth, and uniformly
bounded in $W^{1,\infty}$.

We do not claim that a self-similar evolution is more physical 
(however, see~\cite{RHG99}) or preferred over other types.
The only reason for choosing (quasi) self-similar constructions
is that it makes the mixing scale of (some) solutions easier
to estimate.
\end{parag}

\begin{parag}[Self-similar construction]
\label{ss:introself}

Briefly, the construction starts from a ``basic move", which 
is just a pair of a velocity field and a weak solution of the 
associated continuity equation (quite often the characteristic 
function of a set) and consists in combining  infinitely many 
copies of this basic move, suitably rescaled in time and space, 
so to obtain a velocity field and a solution with the desired 
features (the reader can glimpse at the self-similar construction 
in Figure~\ref{f:self}).
More precisely, the construction is divided in three steps:
%%%%%
\begin{enumerate}
[label=Step~\Roman*., ref=\Roman*,
itemsep=2pt, leftmargin=50pt]

\item\label{step1} 
Scaling analysis (Section~\ref{selfsimilar}): 
we assume the existence of a basic move (velocity field and solution) 
defined for the times $0 \le t \le 1$  with a certain regularity, 
and describe the self-similar construction that gives a new velocity 
field and solution defined for all times $t\ge 0$. 
We then analyze the decay of the mixing scale of this new solution 
and the behavior of the Sobolev norms of the new velocity field.

\item\label{step2} 
Geometric tools (Section~\ref{basic}): 
we establish a series of geometric lemmas that guarantee 
the existence of smooth, divergence-free velocity fields with 
the property that the associated flows deform smooth sets
according to a prescribed evolution in time.

\item \label{step3} 
Construction of the basic move  (Section~\ref{pinching}):   
we use such geometric tools to construct the basic move we need in 
Step~\ref{step1}. 
The solution is the characteristic function of a regular set that evolves 
smoothly for all times except finitely many singular times.
The main technical point here is to deal with the possible singularities present.
\end{enumerate}

We stress that the regularity of (and the bounds on) the velocity 
field and the regularity of the solution constructed in Step~\ref{step1} 
above depend only on the regularity of (and the bounds on) the basic move. 
So far, using  a strictly self-similar approach, we have only been able
to construct a basic move with velocity of class $W^{1,p}$ with $p<\infty$.
\end{parag}

\begin{parag}[Quasi-self-similar construction]
To construct examples of exponential decay
of the mixing scale of the solution with a velocity field 
which is bounded in $W^{1,\infty}$ uniformly in time
(as claimed in~\S\ref{ss:intromain}), we use a construction which is 
not exactly self-similar. 
The main difference is that we combine rescaled copies not of 
just one basic move, but of a (finite) family of basic moves.
Using this more flexible setting we can actually 
construct velocity fields and solutions that are
smooth in both space and time.
	
Of the three steps mentioned above, Step~\ref{step1} is 
unchanged except that we assume the existence of a family
of (smooth) basic moves (see Section~\ref{quasiselfsimilar}). 
Step~\ref{step2} consists of some improvements of the 
geometric lemmas established in Section~\ref{basic}
(see Section~\ref{geo2}).
Finally Step~\ref{step3} consists as before of the 
construction of the basic moves (Section~\ref{snake}). 
In this second construction, the regularity of the velocity
field does not depend just on the regularity of the basic moves,
but also on the details of how the rescaled moves are glued
together and on the overall combinatorial aspects of the
construction (see for instance Figure~\ref{f:gamma-i+f}).  

Given that the basic moves are smooth and hence in $W^{s,p}$ 
for any $s\geq 0$ and $1\leq p\leq \infty$, the analysis in 
Step~\ref{step1} can be performed for any such $s$ and $p$. 
However, the bounds on the Sobolev norms of the velocity 
field and the decay of the mixing scale depend on the specific
rescaling of the basic moves.

It turns out that an exponential decay of the
mixing scale of the solution can only be obtained with 
a uniform bound on the Sobolev norms of the velocity with $s=1$
(regardless of $p$). 
Vice versa, examples with a uniform bound on the Sobolev norms 
of the velocity with $s>1$ can only be obtained with a polynomial
decay of the mixing scale of the solution
(see the discussion in Remark~\ref{r:remmain}\ref{r:remmain:ii}). 
In particular, in our examples of palenstrophy-constrained velocity 
the mixing scale decays only at a polynomial rate
rather than the expected exponential rate
(recall~\S\ref{ss:literature}\ref{ss:literature:d}). 
Whether exponential rate can be obtained is open in this case.
\end{parag}

\begin{parag}[Further remarks and open problems]
\label{ss:introrem}
Our main result is a proof that the bound~\eqref{e:introbound} is optimal.
In order to do so, we construct one velocity field which is bounded in 
$W^{1,p}$ uniformly in time for any~$1 \leq p \leq \infty$,
and one solution, the mixing scale of which decays exponentially. 
In fact, following the strategy described above, it is possible to
construct a large class of initial data for which the mixing scale
decays exponentially.
However, it is unclear whether this is the case for every initial datum.
The following questions about the existence of ``universal mixers''
are therefore  natural.
%%%%%
\begin{enumerate}
[label=(\alph*),ref=(\alph*),
itemsep=2pt, leftmargin=30 pt]

\item\label{q.1} 
Given any bounded initial datum $\bar \rho$, is there a velocity
field, bounded in~$W^{1,p}$ uniformly in time and possibly 
dependent on  $\bar \rho$, such that $\mix\big( \rho(t,\cdot) \big)$
decays to zero? 

\item\label{q.2} 
If the answer to Question~\ref{q.1}  is positive, does this velocity field
drive the mixing scale of $\rho$ to zero exponentially in time? 
Since, in principle, answers to Question~\ref{q.1}  may not be unique
given $\Bar\rho$, we are seeking at least one such velocity field. 

\item\label{q.3}
Does there  exist one velocity field, bounded in~$W^{1,p}$ uniformly in time,
such that $\mix\big( \rho(t,\cdot) \big)$ decays to zero for every bounded
initial datum $\bar \rho$? 

\item \label{q.4}
If the answer to Question~\ref{q.3} is positive, does this velocity field
drive the mixing scale of $\rho$ to zero exponentially in time? 
The same comment about uniqueness in Question~\ref{q.2} applies here.  
\end{enumerate}

We observed in Remark~\ref{r:remmain}\ref{r:remmain:ii} 
that our construction provides an example of palenstrophy-constrained 
flow such that $\mix\big( \rho(t,\cdot) \big)$
decays polynomially in time. In fact, the self-similarity ansatz 
implies polynomial decay of $\mix\big( \rho(t,\cdot) \big)$
under the assumption that the velocity field is bounded in $W^{s,p}$ 
uniformly in time for some $s>1$. 
However, the numerical results mentioned 
in~\S\ref{ss:literature}\ref{ss:literature:d} 
support an exponential decay also for $s>1$, but the optimal 
bound is still unknown in this case.
If the optimal decay were indeed exponential, we would then 
deduce that for $s>1$ self-similarity is too restrictive and only 
allows for sub-optimal decay rates. 
Such a result  would be in stark contrast with the case $s=1$,
for which the optimal decay rate can be achieved with  
a self-similar evolution.
We therefore formulate the following important
question:
%%%%%
\begin{enumerate}
[start=5,label=(\alph*),ref=(\alph*),
itemsep=2pt, leftmargin=30 pt]
\item 
Do a bounded initial datum $\bar \rho$ and a velocity field, 
which is bounded in $W^{s,p}$ uniformly in time for some $s>1$, 
exist such that $\mix\big( \rho(t,\cdot) \big)$ 
decays exponentially in time?
\end{enumerate}
Such an example could not then be self-similar. 
In addition, the analysis in~\cite{schulze} implies that it 
cannot be realized with a ``localized'' flow: roughly speaking,
once the solution has been mixed to a certain scale, it can
be more convenient to let the flow act again at larger scales 
before reaching a lower mixing scale.

\medskip

As mentioned in~\S\ref{ss:literature}, examples of 
enstrophy-constrained flows that saturate the exponential decay
rate~\eqref{e:introbound} have been constructed in~\cite{zlatos}.
There, the authors utilize a cellular flow consisting of 
pseudo-rotations on a family of nested tilings of the square,
and are able to obtain exponential mixing of every bounded
initial datum $\Bar\rho$ by means of a velocity field, which
depends on $\Bar\rho$ in general,  bounded in $W^{1,p}$ uniformly
in time in the range $1 \leq p< \bar p$ for some $\bar p > 2$. 
Therefore, this construction provides a partial answer to
Question~\ref{q.2} above.
Their nice geometric argument is based on a ``stopping time'' 
for the pseudo-rotation, which is determined by a clever
application of the intermediate value theorem for continuous
functions.
Their construction also applies in the range 
$\bar p \leq p \leq \infty$, giving a mixing rate which is 
slightly slower than exponential, thus answering Question~\ref{q.1}
above.

In comparison with~\cite{zlatos}, while we obtain exponential 
mixing of the solution in the full range $1 \leq p \leq \infty$,
including the Lipschitz case, our construction applies only to 
certain specific initial data.
Our strategy has a geometric flavor and generates velocity fields
and solutions that are smooth.
This last fact is relevant for the full scaling analysis (recall 
Remark~\ref{r:remmain}\ref{r:remmain:ii}) and for the application to
the study of the loss of regularity for continuity equations, which 
is addressed in the companion paper~\cite{loss} (see~\S\ref{ss:loss} 
for a brief discussion). 
In addition, our examples provide an important insight
into the geometrical properties of regular Lagrangian flows.
\end{parag}

\begin{parag}[Geometry of regular Lagrangian flows]
\label{ss:geomrlf} 
When the velocity field is Lipschitz (as in the quasi-self-similar 
examples) then the associated flow is well-defined in the classical
sense, and there is not much to add.
However, when the velocity field has singularities and belongs
only to some Sobolev class (as in the self-similar examples) 
then the flow is no longer well defined in the classical sense, 
and one should instead consider the notion of regular Lagrangian flow.%
\footnoteb{The notion of  regular Lagrangian flow is the appropriate 
one for the flow generated by an ordinary differential equation
(ODE for short) for which the velocity field has low regularity. 
A regular Lagrangian flow in $\R^d$ solves the ODE for almost every
initial point, and additionally preserves the $d$-dimensional Lebesgue
measure up to a bounded factor. 
The theory in~\cite{DPL,Amb} guarantees that, if the velocity field
is Sobolev or $BV$ and has bounded divergence, then there exists
a unique regular Lagrangian flow associated to it. Regular Lagrangian flows associated
to vector fields with zero 
divergence, as in the examples we construct, are volume-preserving.}
We stress that the regular Lagrangian flow associated to our
self-similar examples has the following additional properties:
%%%%%
\begin{itemize}
[leftmargin=30pt, itemsep=2pt]

\item 
the associated regular Lagrangian flow does not preserve the 
property of a set of being connected;
\item
there exists a segment that is collapsed to a point and, subsequently, 
inflated back to a full segment in finite time under this regular 
Lagrangian flow;%
\footnoteb{By definition, a regular Lagrangian flow in $\R^d$
does not compress $d$-dimensional sets to null set. We see here
that it can compress $1$-dimensional sets to $0$-dimensional sets.}
\item 
as a consequence, the trajectories of the velocity field
(that is, the solutions of the associated ODE) which start 
at a point in this segment are non unique.
\end{itemize}
\end{parag}

\begin{parag}[Loss of regularity for continuity equations]
\label{ss:loss}
Mixing leads to growth of positive Sobolev norms of the solution
$\rho$, saturating the exponential growth which follows from the
classical Gr\"onwall inequality. Analytically, this result is a 
consequence of the preservation of the $L^2$-norm of the solution 
and of the exponential decay of the negative Sobolev norms
in~\eqref{e:intromain} by an interpolation argument.

In the companion paper~\cite{loss},  we  present an example of a 
velocity field in~$W^{1,p}$ for any $1 \leq p < \infty$ that is 
regular except at a point and of a smooth $\bar\rho$, such that the
corresponding solution of~\eqref{e:continuity} leaves any Sobolev
space~$H^s$ with~$s>0$ instantaneously for~$t>0$.
Extensions of this construction to non-Lipschitz fields with Sobolev
regularity of order higher than $1$ are also possible.

Lack of propagation of $C^0$ and of $BV$ regularity for solutions 
of the continuity equation was already observed in~\cite{ferruccio}.
More recently, in~\cite{Jab15} it was observed that Sobolev
regularity of order one does not transfer from a velocity field
to its associated flow, using a different construction that exploits 
a randomization procedure on certain basic elements of the flow.
\end{parag}

%
%  ACKNOWLEDGMENTS
%
\begin{parag*}[Acknowledgments]
The first and third authors acknowledge the hospitality of the
Department of Mathematics and Computer Science at the University
of Basel, where this work was started.
Their stay was partially supported by the Swiss National Science
Foundation grants 140232 and 156112.
The visits of the second author to Pisa were supported by the
University of Pisa PRA project ``Metodi variazionali per problemi
geometrici [Variational Methods for Geometric Problems]''.
The second author was partially supported by the ERC Starting 
Grant 676675 FLIRT and third author by the US~National Science
Foundation grants DMS~1312727 and 1615457.
\end{parag*}

%
%	SECTION 2
%
\section{Preliminaries}\label{s:prelim}

Throughout the paper, we will make extensive use of homogeneous
Sobolev spaces with real order of differentiability and of their properties. We
present here their definition and main properties of interest for our
work, namely those regarding scaling, interpolation, and embeddings.
For a systematic exposition we refer the reader to~\cite{bergh,chemin,valdinoci,grafakos,triebel}.
In addition, in the last part of this section, we define the two notions of mixing scale
that we will use in our work.

\medskip

We limit our presentation to the two-dimensional case, however all definitions and
results can be extended with obvious changes to the case of higher space
dimensions.
We work both on the plane $\R^2$ and on the two-dimensional flat torus
$\T^2 := \R^2 / \ZZ^2$.
The Fourier transform of a
tempered distribution $f$ on $\R^2$ is denoted by $\hat f(\xi)$;
the Fourier coefficients of a distribution $f$ on $\T^2$
are denoted by $\hat{f}(k)$.

\begin{parag}[Homogeneous Sobolev spaces $\boldsymbol{\dot{H}^s}$]
\label{ss:defH}
For $s \in \R$, we say that a distribution $f$ on $\T^2$
belongs to the homogeneous Sobolev space $\dot{H}^s(\T^2)$ if
\begin{equation}\label{e:defHT}
\| f \|^2_{\dot{H}^s(\T^2)} 
:= \sum_{k \in \ZZ^2} |k|^{2s} |\hat{f}(k)|^2 
< \infty 
\,.
\end{equation}

We say that a tempered distribution $f$ on $\R^2$
belongs to the homogeneous Sobolev space
$\dot{H}^s(\R^2)$ if $\hat f \in L^1_{\rm loc}(\R^2)$ and
\begin{equation}\label{e:defH}
\| f \|^2_{\dot{H}^s(\R^2)} 
:= \int_{\R^2} | \xi |^{2s} | \hat{f}(\xi) |^2 \, d\xi 
< \infty 
\,.
\end{equation}
This definition is suitable for our purposes, 
though it is different from the standard definition, 
which requires the use of equivalence classes modulo polynomials 
(see e.g.~\cite{grafakos,triebel}).
\end{parag}

We remark that homogeneous Sobolev spaces do not form a scale, due to the singularity
of the multiplier at the origin in frequency space. In particular, it is
generally not true that any square integrable function is automatically in
$\dot{H}^s$, for~$s<0$.

\begin{remark}\label{r:fctsinH}
\quad
%%%%%
\begin{enumerate}
[label=(\roman*), ref=(\roman*),
leftmargin=0pt, itemsep=2pt, itemindent=30pt]

\item
\label{r:fctsinH:1} 
From \eqref{e:defHT} we immediately recognize that, in order for 
a function $f \in L^2(\T^2)$ to belong to some $\dot{H}^{s}(\T^2)$ 
with $s<0$, it is necessary that~$\hat{f}(0)=0$. 
This corresponds to the zero-integral condition $\int_{\T^2} f = 0$. 
Conversely, let $f \in L^2(\T^2)$ be a function with zero integral: 
since the sequence of its Fourier coefficients 
$\{ \hat{f}(k) \}_{k \in \ZZ^2}$ belongs to $\ell^2$, 
we deduce that such a function necessarily belongs to 
$\dot{H}^{s}(\T^2)$ for every $s \leq 0$. 

\item 
\label{r:fctsinH:2} 
Given a function $f \in L^1(\R^2)$, its Fourier transform
$\hat{f}$ is continuous.
If~${s \leq -1}$ the singularity at $\xi=0$ in \eqref{e:defH}
is not integrable, unless $\hat{f}(0)=0$. 
This means that the  condition $\int_{\R^2} f = 0$ is 
a necessary condition for $f$ to belong to 
$\dot{H}^{s}(\R^2)$ with $s \leq -1$. 

\item 
\label{r:fctsinH:3} 
Let $f \in L^2(\R^2)$ have compact support and zero integral. 
Paley-Wiener theorem implies that the Fourier transform $\hat{f}$ 
is analytic. 
In particular, there is a constant $C>0$ for which 
$|\hat{f}(\xi)|=| \hat{f}(\xi) - \hat{f}(0)| \leq C |\xi|$
for any~${|\xi|\leq 1}$, therefore the singularity
at $\xi=0$ in \eqref{e:defH} is integrable for every~${s > -2}$. 
Since~${\hat{f} \in L^2(\R^2)}$, we conclude 
that~$f \in \dot{H}^s(\R^2)$ for any $-2<s \leq 0$.
\end{enumerate}
\end{remark}

\begin{parag}[Homogeneous Sobolev spaces $\boldsymbol{\dot{W}^{s,p}}$]
In the particular case $s \geq 0$ we extend the definition in 
\S\ref{ss:defH} to an arbitrary summability exponent $1<p<\infty$.%
\footnoteb{Only the spaces $\dot{W}^{s,p}$ with $s\geq 0$ will 
be needed for our scopes (see again~\cite{grafakos,triebel} 
for a discussion of these spaces with regularity index $s\in\R$).}
We say that a distribution $f$ on $\T^2$
belongs to the homogeneous Sobolev space $\dot{W}^{s,p} (\T^2)$ if
\begin{equation}\label{e:defWT}
\sum_{k \in \ZZ^2} |k|^s \hat{f}(k) e^{ikx} \in L^p(\T^2) \,,
\end{equation}
and we let $\smash{ \| f \|_{\dot{W}^{s,p}(\T^2)} }$ be the $L^p(\T^2)$ 
norm of the function in~\eqref{e:defWT}. 
We observe that this definition gives a seminorm and not a norm, in general.

We say that a tempered distribution $f$ on $\R^2$
belongs to the homogeneous Sobolev space $\dot{W}^{s,p}(\R^2)$ 
if $\hat{f} \in L^1_{\rm loc}(\R^2)$ and
\begin{equation}\label{e:defW}
\mathscr{F}^{-1} \big( | \xi |^s \hat{f}(\xi) \big) \in L^p(\R^2)
\end{equation}
where $\mathscr{F}^{-1}$ is the inverse of the Fourier transform,
and we let $\smash{ \| f \|_{\dot{W}^{s,p}(\R^2)} }$ be the $L^p(\R^2)$ 
norm of the function in~\eqref{e:defW}. 

The condition that $\smash{\hat{f} \in L^1_{\rm loc}(\R^2)}$
guarantees that this quantity is a norm.  
We have the obvious identification $\dot W^{s,2}\equiv \dot H^s$.
\end{parag}

In our work, homogeneous spaces will be used only to measure the 
``size'' of given functions and velocity fields, which will be 
typically regular. 
We can avoid to give a rigorous and complete definition of these 
spaces, which again is based on equivalence of distributions modulo
polynomials, as we can just rely on the seminorms defined above. 
(We refer to e.g.~\cite{grafakos,triebel} for a more detailed 
discussion of these spaces.)

\begin{remark}\label{r:tay}
\quad
%%%%%
\begin{enumerate}
[label=(\roman*), ref=(\roman*), 
leftmargin=0pt, itemsep=2pt, itemindent=30 pt]

\item
%\label{r:tay:1} 
The non-homogeneous Sobolev spaces $H^s$ and $W^{s,p}$ are defined 
by replacing in~\eqref{e:defHT}, \eqref{e:defH}, \eqref{e:defWT}, 
and~\eqref{e:defW} the $|k|$ and $|\xi|$ by the symbols 
$\langle k \rangle := \sqrt{1+|k|^2}$ and 
$\langle \xi \rangle := \sqrt{1+|\xi|^2}$, respectively.
If $s\in\NN$ then~$H^s$ and $W^{s,p}$ coincide with the usual 
Sobolev spaces defined using weak derivatives. 

\item
%\label{r:tay:2} 
In the case of the plane $\R^2$, if we consider functions that 
are supported in a fixed compact set then the homogeneous and 
the non-homogeneous norms are equivalent, and therefore the 
homogeneous and the non-homogeneous spaces coincide. 
In the case of the torus $\T^2$, the homogeneous and the 
non-homogeneous spaces always coincide.
The non-homogeneous norm is equivalent to the sum of the 
homogeneous norm and  the $L^2$ norm. 

\item
%\label{r:tay:3} 
In contrast to the case of negative spaces (see
Remarks~\ref{r:fctsinH}\ref{r:fctsinH:1}, \ref{r:fctsinH}\ref{r:fctsinH:2}), 
it is not necessary for functions to have zero
average to belong to $\dot{W}^{s,p}$ with $s \geq 0$. 

\item
\label{r:tay:4} 
Let $K$ be a  compact set contained in the open square~$\Qone = (-1/2,1/2)^2$. 
Let $\sigma$ be the canonical projection of the plane $\R^2$ onto 
the torus~$\T^2 = \R^2 / \ZZ^2$. 
The restriction of $\sigma$ to $(-1/2,1/2)^2$ is a diffeomorphism. 
A Sobolev function $f$ on $\R^2$ with support contained in $K$ can be 
identified with a Sobolev function $\tilde f$ on $\T^2$ via the formula 
$\tilde f = f \circ \sigma^{-1}$.
It can be shown that, if~$s \geq 0$ and $1< p <\infty$, then
\[
C^{-1} \| f \|_{\dot{W}^{s,p}(\R^2)}
\leq \| \tilde f \|_{\dot{W}^{s,p}(\T^2)} 
\leq C \| f \|_{\dot{W}^{s,p}(\R^2)} 
\, ,
\]
where the constant $C$ depends on $s$, $p$, and on the compact set $K$
(see~\cite{taylor}).
\end{enumerate}
\end{remark}

\begin{parag}[Lipschitz-H\"older spaces]
\label{ss:lhs}
For notational convenience, in this paper we denote 
by $\dot{W}^{s,\infty}(\T^2)$ and $\dot{W}^{s,\infty}(\R^2)$ 
the homogeneous Lipschitz-H\"older spaces
defined as follows (we do not write explicitly the domain).

If $s$ is a positive integer,
we say that a function $f$ belongs to 
$\dot{W}^{s,\infty}$
if $f\in C^{s-1}$ and there is a constant $C>0$ such that
\begin{equation}
\label{e:LIP}
| f^{(s-1)}(x) - f^{(s-1)}(y)| \leq C |x-y|
\quad\text{for every $x$ and $y$.}
\end{equation}
We let $\| f \|_{\dot{W}^{k,\infty}}$ be
the minimal constant $C$ for which \eqref{e:LIP} holds.

If $s\geq 0$ is not an integer, we let $\lfloor s \rfloor$ 
be the largest integer smaller than $s$, 
and we say that a function $f$ belongs to $\dot{W}^{s,\infty}$
if $f \in C^{\lfloor s \rfloor}$ and there is a constant $C>0$
such that
\begin{equation}\label{e:HOLD}
| f^{(\lfloor s \rfloor)}(x) - f^{(\lfloor s \rfloor)}(y) | 
\leq C |x-y|^{s - \lfloor s \rfloor}
\quad\text{for every $x$ and $y$.}
\end{equation}
We let $\| f \|_{\dot{W}^{s,\infty}}$  be
the minimal constant $C$ for which \eqref{e:HOLD} holds.
\end{parag}

\begin{remark}
\label{r:embedding}
In two space dimensions, the Sobolev space $\dot{W}^{s,p}$ embeds in the
Lipschitz space $\dot{W}^{1,\infty}$ if $s>1$ and $p > \frac{2}{s-1}$.
\end{remark}

\begin{parag}[Scaling properties]
We will be frequently interested in the behavior of homogeneous Sobolev norms
under rescaling. Given $\lambda>0$ and a function $f$, we set
\begin{equation}\label{e:frisc}
f_\lambda(x) := f \left( \frac{x}{\lambda} \right) \,.
\end{equation}

If $f$ is defined on the torus and $1/\lambda$ is an integer, then the function
$f_\lambda$ in \eqref{e:frisc} is well defined on the torus and it holds
\begin{equation}\label{e:scaleT}
\| f_\lambda \|_{\dot{W}^{s,p}(\T^2)}
=
\lambda^{-s} \, \| f \|_{\dot{W}^{s,p}(\T^2)} \,.
\end{equation}

If $f$ is defined on the plane, then the function $f_\lambda$ in \eqref{e:frisc}
is well defined for any~$\lambda>0$ and it holds
\begin{equation}\label{e:scaleR}
\| f_\lambda \|_{\dot{W}^{s,p}(\R^2)}
=
\lambda^{\frac{2}{p}-s} \, \| f \|_{\dot{W}^{s,p}(\R^2)} \,.
\end{equation}

When $p=2$, both formulas~\eqref{e:scaleT} and~\eqref{e:scaleR}
hold also for $s<0$.
\end{parag}

\begin{remark}
The difference between the exponents in formulas~\eqref{e:scaleT}
and~\eqref{e:scaleR} is due to the fact that, in the case of the 
torus, we are not changing the period, hence the measure of the 
torus, when rescaling.
In particular, the rescaling of a single bump on the plane remains 
a single bump, while on the torus $1 / \lambda^2$ rescaled copies 
of the bump are produced.
\end{remark}

\begin{parag}[Interpolation]
We will frequently rely on the following standard interpolation inequality.
If $s_1 < s < s_2$ and 
$s = \vartheta s_1 + (1-\vartheta) s_2$, $\vartheta \in (0,1)$, then
\begin{equation}\label{e:interpH}
\| f \|_{\dot{H}^s} \leq \| f \|_{\dot{H}^{s_1}}^\vartheta \| f
\|_{\dot{H}^{s_2}}^{1-\vartheta}
\end{equation}
and
\begin{equation}\label{e:interpW}
\| f \|_{\dot{W}^{s,p}} \leq \| f \|_{\dot{W}^{s_1,p}}^\vartheta \| f
\|_{\dot{W}^{s_2,p}}^{1-\vartheta} \,,
\end{equation}
where both inequalities hold both with domain $\T^2$ and $\R^2$.
The same holds in the case $p=\infty$ in the context of Lipschitz-H\"older
spaces
(recall~\S\ref{ss:lhs}) and can be proven with a simple direct argument.
\end{parag}

We next introduce the two notions of mixing scale that will be employed 
in this paper to quantify the level of mixedness of the solution $\rho$. 
Both definitions can be given on the torus and on the plane, and will 
be stated for functions of space variables only with zero integral.
In our framework, the mixing scales of the solution will of course depend 
on time, due to the fact that the solution is dependent on time.

\begin{parag}[Functional mixing scale~\cite{doering,MMP}]
\label{def:Functscale}
Assume that $\rho$ has zero integral. 
The functional mixing scale of $\rho$ is $\| \rho\|_{\dot{H}^{-1}}$.
\end{parag}

\begin{parag}[Geometric mixing scale~\cite{bressan}]
\label{d:geomix}
Assume that $\rho$ has zero integral. 
Given $0 < \kappa < 1$, the geometric mixing scale of $\rho$ is the
infimum of all $\varepsilon>0$ such that, for every $x \in
{\mathbb T}^2$, there holds
\begin{equation}\label{e:geom}
\frac{1}{\| \rho \|_\infty}
\left| \mint_{B_\varepsilon(x)} \rho \, dy \right| \leq \kappa \,.
\end{equation}
\end{parag}
The parameter $\kappa$ is fixed and plays a minor role in the definition.
Informally, in order for $\rho$ to have geometric mixing scale 
$\varepsilon$, the average of the solution on every ball of radius 
$\varepsilon$ is essentially zero.
Alternatively, the property of having (approximately) zero
average needs to be localizable to balls of radius $\varepsilon$.

\begin{remark} 
The geometric mixing scale has been originally introduced
in~\cite{bressan} for solutions with value $\pm 1$: 
given $0<\tilde \kappa<1/2$, \eqref{e:geom} is replaced by the
requirement that
\begin{equation}\label{e:geomgeom}
\tilde \kappa \leq \frac{|\{ \rho = 1 \} \cap B_\varepsilon(x) |}{|
B_\varepsilon(x)|}
\leq 1 - \tilde \kappa \,.
\end{equation}
Informally,  in order for $\rho$ to have geometric mixing scale~$\varepsilon$,
every ball of radius $\varepsilon$ contains a ``substantial portion'' 
of both level sets $\{ \rho = 1 \}$ and $\{ \rho = -1 \}$.
The more general definition we adopt (see Definition~\ref{d:geomix}) 
has been introduced in~\cite{zlatos} and it applies to every bounded 
solution $\rho$, without any constraint on its values.
It is easily seen that~\eqref{e:geom} and~\eqref{e:geomgeom}
correspond if $\kappa = 1 - 2 \tilde \kappa$.
\end{remark}

As previously mentioned, the two notions of mixing scale are not equivalent,
though they are strongly related (we refer to \cite{llnmd,zillinger} 
for a further discussion on this point; see also Lemma~\ref{l:fg}).

%
%	SECTION 3
%
\section{Scaling analysis in a self-similar construction}
\label{selfsimilar}

A conceivable procedure for mixing consists of a self-similar evolution.
Such a procedure, together with the related scaling analysis, 
has been presented in~\cite{ACM}.
We work on the torus $\T^2$. We let $s \geq 0$ and $1 \leq p \leq \infty$
be fixed and we make the following assumption.

\begin{parag}[Assumption: self-similar base element]
\label{ass}
There exist a velocity field~$u_0$ and a (not identically zero) solution
$\rho_0$ to \eqref{e:continuity}, both defined for $0 \leq t \leq 1$ and~$x \in
\T^2$, such that:
%%%%%
\begin{enumerate}
[label=(\roman*), ref=(\roman*),
itemsep=2pt, leftmargin=30 pt]

\item 
$u_0$ is  bounded, bounded in $\dot{W}^{s,p} ({\mathbb T}^2)$ uniformly in
time, and divergence-free;

\item 
$\rho_0$ is bounded and has mean zero for all times;

\item\label{ass:iii} 
there exists a positive constant $\lambda$, with $1/\lambda$ an
integer greater or equal than $2$, such that
\[
\rho_0(1, x) = \rho_0 \left( 0, \frac{x}{\lambda} \right) \,.
\]
\end{enumerate}
\end{parag}

An explicit example of a $u_0$ and a $\rho_0$ satisfying these 
assumption will be given in Section~\ref{pinching} 
for $s=1$ and arbitrary $1\leq p<\infty$.
In fact, the range of indices for this example is slightly larger 
(see~\eqref{e:range}). 
However, it is not evident to us  how to construct an example that 
satisfies Assumption~\ref{ass} outside the range in~\eqref{e:range},
in particular for the case $s=1$ and $p=\infty$. 
This limitation leads us to introduce the second geometric construction
in~\S\ref{quasiselfsimilar}.

\medskip

For later use, we introduce the following definition.

\begin{definition}\label{d:tiling}
Given $\lambda > 0$, with $1/\lambda$ an integer, 
we denote by $\Til_\lambda$ the tiling of $\mathbb{T}^2$ 
consisting of $1/\lambda^2$ open squares of side-length 
$\lambda$ in $\mathbb{T}^2$  of the form
\[
\left\{ (x,y) \in {\mathbb T}^2 \, : \, 
\text{$(k-1)\lambda < x < k\lambda$ and $(h-1)\lambda < y < h\lambda$} 
\right\}
\,,
\]
with $k,h = 1,2,\ldots, 1 / \lambda$.

Denoting by $\Qone$ the unit open square
$(-1/2,1/2)^2 \subset \mathbb{R}^2$, the tiling $\Til_\lambda$ 
of $\Qone$ is defined in a similar way. 
Given any square $Q \in \Til_{\lambda}$, we denote by $r_Q$ its center, 
so that $Q = \lambda \Qone + r_Q$.
\end{definition}

\begin{parag}[A self-similar construction]\label{ss:ssc}
We begin by fixing a positive number $\tau$ (to be determined later).
Under Assumption \ref{ass}, for each integer $n=1,2,\ldots$ and 
for $t \in [0,\tau^n]$ we set
\[
u_n(t,x) := \frac{\lambda^n}{\tau^n} \; u_0 \left( \frac{t}{\tau^n} ,
\frac{x}{\lambda^n} \right) \,,
\quad \rho_n(t,x) := \rho_0 \left( \frac{t}{\tau^n} , \frac{x}{\lambda^n}
\right) \,.
\] 
Then $\rho_n$ is a solution of~\eqref{e:continuity} corresponding
to the velocity field $u_n$. 
Moreover, because of Assumption~\ref{ass}\ref{ass:iii},
\begin{equation}\label{e:glue}
\rho_n (\tau^n , x) = \rho_{n+1} (0,x) \,.
\end{equation}

We now define $u$ and $\rho$ by concatenating the velocity fields
$u_0, u_1, \ldots$ and the corresponding solutions
$\rho_0, \rho_1, \ldots$. In detail, we let
\[
u(t,x) := u_n (t - T_n , x) 
\,, \quad 
\rho(t,x) := \rho_n (t - T_n , x)
\]
for $T_n \leq t < T_{n+1}$, and $n=1,2,\ldots$, where
\[
T_n := \sum_{i=0}^{n-1} \tau^i 
\quad \text{for $n=1, 2, \ldots, \infty$.}
\]
With this choice,  $u$ and $\rho$ are defined for $0 \leq t < T_\infty$. 
Moreover, it follows from Equation~\eqref{e:glue} that $\rho$ is a weak 
solution on $(0,T_\infty)$ of the Cauchy
problem for~\eqref{e:continuity} with velocity field $u$ and
initial condition $\bar{\rho}(x) = \rho_0(0,x)$.

\begin{figure}[ht]
\begin{center}
  \includegraphics[scale=1]{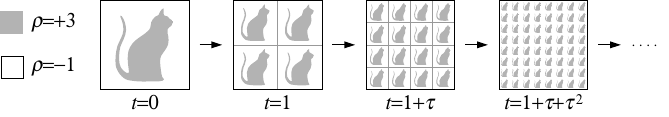}
  \caption{Example of self-similar evolution for a function
$\rho$ taking only two values.}
  \label{f:self}
\end{center}
\end{figure}

Using~\eqref{e:scaleT}, we compute
\begin{equation}\label{e:fsc}
\left\| u_n (t,\cdot) \right\|_{\dot{W}^{s,p}(\mathbb{T}^2)}
= \left( \frac{\lambda^{1-s}}{\tau} \right)^n \left\| u_0 \left(
\frac{t}{\tau^n} , \cdot \right) \right\|_{\dot{W}^{s,p}(\mathbb{T}^2)} \,.
\end{equation}
Next we choose
\[ 
\tau = \lambda^{1-s}
\, , 
\]
so that $u$
is bounded in $\dot{W}^{s,p}({\mathbb T}^2)$ uniformly in time.
Moreover,
\begin{equation}
\label{e:ssc}
\| \rho_n (t,\cdot) \|_{\dot{H}^{-1}(\mathbb{T}^2)} =  \lambda^n \left\| \rho_0
\left( \frac{t}{\tau^n} , \cdot \right) \right\|_{\dot{H}^{-1}(\mathbb{T}^2)}
\leq M \lambda^n 
\,,
\end{equation}
where we have set
\[
M := \sup_{0 \leq t \leq 1} \| \rho_0 (t,\cdot) \|_{\dot{H}^{-1}(\mathbb{T}^2)}
\, .
\]
Equivalently, 
\begin{equation}\label{e:decay}
\| \rho(t,\cdot) \|_{\dot{H}^{-1}(\mathbb{T}^2)} \leq M \lambda^n 
\quad\text{for $T_n \leq t < T_{n+1}$.}
\end{equation}
We have three possible cases (recall that $s\geq0$):
%%%%%
\begin{enumerate}
[label=(\alph*),
leftmargin=30pt, itemsep=2pt]

\item 
$s<1$, hence $\tau < 1$: In this case, $T_\infty$ is
finite and
\[
\| \rho(t,\cdot) \|_{\dot{H}^{-1}(\T^2)} \to 0 
\,,
\]
as $t \to T_\infty$. That is, we have perfect mixing in finite time.

\item 
$s=1$, hence $\tau = 1$. In this case, $T_\infty = \infty$, $T_n =
n$, and the inequality $t < T_{n+1}$ in~\eqref{e:decay} becomes $t-1<n$.
The estimate in~\eqref{e:decay} then  yields the following exponential decay of
the functional mixing scale:
\[
\| \rho(t,\cdot) \|_{\dot{H}^{-1}(\mathbb{T}^2)} \leq M \lambda^{t-1} \,.
\]

\item 
$s>1$, hence $\tau>1$. In this case $T_\infty = \infty$ and
\[
T_n = \frac{\tau^n-1}{\tau - 1} = \frac{\lambda^{(1-s)n}-1}{\lambda^{1-s} - 1}.
\]
By the same argument as above, \eqref{e:decay} implies the following polynomial
decay of the functional mixing scale:
\[
\| \rho(t,\cdot) \|_{\dot{H}^{-1}(\mathbb{T}^2)} \leq M \frac{\left[ 1 + t
(\lambda^{1-s} - 1) \right]^{-\frac{1}{s-1}}}{\lambda}
\simeq C(M,\lambda,s) \, t^{-\frac{1}{s-1}} \,.
\]
\end{enumerate}
\end{parag}

We formalize the above discussion in the following theorem.

\begin{theorem}\label{t:main}
Given $s\geq 0$ and $1 \leq p \leq \infty$, under Assumption~\ref{ass}, there
exist a bounded divergence-free velocity field $u$ and a weak solution $\rho$ of
the Cauchy problem for~\eqref{e:continuity}, such that $u$ is bounded in
$\dot{W}^{s,p}(\mathbb{T}^2)$ uniformly in time and the functional mixing scale
of $\rho$ exhibits the following behavior depending on $s$:
%%%%%
\begin{itemize}
[leftmargin=30pt, itemsep=0pt]

\item 
case $s<1$: perfect mixing in finite time;
\item 
case $s=1$: exponential decay;
\item 
case $s>1$: polynomial decay.
\end{itemize}
\end{theorem}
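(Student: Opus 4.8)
The plan is to build $u$ and $\rho$ by gluing together successively rescaled copies of the base element supplied by Assumption~\ref{ass}, and then to read off the three regimes by a pure scaling computation. I would fix a time-scale parameter $\tau>0$ (to be chosen) and, for each $n\ge 1$ and $t\in[0,\tau^n]$, set
\[
u_n(t,x):=\frac{\lambda^n}{\tau^n}\,u_0\!\left(\frac{t}{\tau^n},\frac{x}{\lambda^n}\right),\qquad
\rho_n(t,x):=\rho_0\!\left(\frac{t}{\tau^n},\frac{x}{\lambda^n}\right).
\]
The natural space–time scaling of the continuity equation (one time derivative balanced against one space derivative, with the prefactor $\lambda^n/\tau^n$ precisely compensating) shows that $\rho_n$ is a weak solution of~\eqref{e:continuity} with velocity $u_n$; since $1/\lambda$ is an integer, $u_n$ and $\rho_n$ descend to $\mathbb{T}^2$. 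Assumption~\ref{ass}(iii) yields the matching identity $\rho_n(\tau^n,\cdot)=\rho_{n+1}(0,\cdot)$, so, putting $T_n:=\sum_{i=0}^{n-1}\tau^i$ and defining $u(t,\cdot):=u_n(t-T_n,\cdot)$, $\rho(t,\cdot):=\rho_n(t-T_n,\cdot)$ on $[T_n,T_{n+1})$, the resulting $\rho$ is a global weak solution of~\eqref{e:continuity} on $[0,T_\infty)$ with datum $\bar\rho=\rho_0(0,\cdot)$, which is bounded, of zero average, and nontrivial because $\rho_0$ is. The one delicate point here is the weak formulation \emph{across} the gluing times: testing~\eqref{e:continuity} against $\varphi\in C^\infty_c$ on each subinterval $[T_n,T_{n+1}]$ and summing, the boundary contributions telescope exactly because $t\mapsto\langle\rho(t,\cdot),\varphi\rangle$ is continuous at each $T_n$, which is precisely the matching identity.

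Next I would estimate norms. By the torus scaling law~\eqref{e:scaleT} with exponents $(s,p)$,
\[
\|u_n(t,\cdot)\|_{\dot W^{s,p}(\mathbb{T}^2)}=\Big(\tfrac{\lambda^{1-s}}{\tau}\Big)^{n}\big\|u_0(\tfrac{t}{\tau^n},\cdot)\big\|_{\dot W^{s,p}(\mathbb{T}^2)},
\]
so the choice $\tau:=\lambda^{1-s}$ makes $\|u(t,\cdot)\|_{\dot W^{s,p}(\mathbb{T}^2)}$ bounded uniformly in time, by $\sup_{[0,1]}\|u_0(t,\cdot)\|_{\dot W^{s,p}}<\infty$ from Assumption~\ref{ass}(i); $u$ is bounded and divergence-free for the same reasons. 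Applying~\eqref{e:scaleT} with exponents $(-1,2)$ — legitimate since for $p=2$ the formula holds for all real $s$ — gives, with $M:=\sup_{[0,1]}\|\rho_0(t,\cdot)\|_{\dot H^{-1}(\mathbb{T}^2)}<\infty$,
\[
\|\rho(t,\cdot)\|_{\dot H^{-1}(\mathbb{T}^2)}=\lambda^{n}\,\big\|\rho_0(\tfrac{t}{\tau^n},\cdot)\big\|_{\dot H^{-1}(\mathbb{T}^2)}\le M\lambda^{n},\qquad T_n\le t<T_{n+1},
\]
which is the basic decay estimate feeding the functional mixing scale of Definition~\ref{def:Functscale}.

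It then remains to trade $\lambda^n$ for a rate in $t$, using $0<\lambda\le 1/2$ and the inequality $t<T_{n+1}$. If $s<1$ then $\tau<1$, $T_\infty=1/(1-\tau)<\infty$ and $\lambda^n\to 0$ as $t\to T_\infty$; extending $\rho\equiv 0$, $u\equiv 0$ for $t\ge T_\infty$ produces perfect mixing in finite time — this extension is still a weak solution because $\rho$ is bounded in $L^\infty$, hence in $L^2(\mathbb{T}^2)$, so $\|\rho(t,\cdot)\|_{\dot H^{-1}}\to 0$ forces $\rho(t,\cdot)\rightharpoonup 0$. If $s=1$ then $\tau=1$, $T_n=n$, so $n>t-1$ and $\|\rho(t,\cdot)\|_{\dot H^{-1}}\le M\lambda^{t-1}$: exponential decay. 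If $s>1$ then $\tau>1$, $T_n=(\tau^n-1)/(\tau-1)$, and inverting via $t<T_{n+1}$ to write $\lambda^n=(\tau^n)^{-1/(s-1)}$ in terms of $t$ yields $\|\rho(t,\cdot)\|_{\dot H^{-1}}\le C(M,\lambda,s)\,t^{-1/(s-1)}$: polynomial decay.

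The genuinely hard part of the overall program is not this theorem but the construction of a base element satisfying Assumption~\ref{ass} for $s=1$ (carried out later, in Section~\ref{pinching}); granting the assumption, the argument above is essentially bookkeeping, the only subtle points being the continuity of the distributional pairing at the gluing times and, in the regime $s<1$, the passage to the limit $t\to T_\infty$.
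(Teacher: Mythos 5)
Your proposal is correct and follows essentially the same route as the paper's own argument in Section~\ref{selfsimilar}: the same rescaled family $u_n,\rho_n$, the same gluing via the matching identity from Assumption~\ref{ass}(iii), the same choice $\tau=\lambda^{1-s}$ dictated by~\eqref{e:scaleT}, and the same case analysis on $T_n$. The extra remarks you add (telescoping of the boundary terms in the weak formulation at the gluing times, and the extension by zero past $T_\infty$ when $s<1$) are consistent with what the paper relegates to Footnote~\ref{fn:glue} and to its discussion of non-uniqueness for $s<1$.
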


In fact, all homogeneous negative Sobolev norms $\|\rho(t)\|_{\dot{H}^{-r}}$
(with $r>0$) would exhibit the same behavior, the only difference being in the constant
for the exponential decay and the exponent for the polynomial decay, which
depend on~$r$.
We observe that, in the case $s>1$, such self-similar scaling analysis does
not match the exponential lower bound for the (geometric and functional)
mixing scale, which is expected to be optimal 
(recall the discussion in~\S\ref{ss:literature}\ref{ss:literature:d}).

\medskip

The following lemma shows that the geometric mixing scale exhibits 
the same behavior as the functional mixing scale, as established 
in Theorem~\ref{t:main} above.

\begin{lemma}\label{l:fg}
Fix $n\in\NN$, and let $\rho$ be a bounded function
such that
\begin{equation}\label{e:zerotil}
\mint_{Q} \rho \, dy = 0,
\end{equation}
for every square $Q \in \Til_{\lambda^n}$. Then the geometric mixing scale of $\rho$, 
introduced  in Definition~\ref{d:geomix}, is at most
\begin{equation}\label{e:peri}
\frac{4 \sqrt{2} \, \lambda^n}{\kappa}\,.
\end{equation}
\end{lemma}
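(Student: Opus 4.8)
The plan is to estimate the average of $\rho$ over an arbitrary ball $B_\varepsilon(x)$ by comparing it with the collection of squares of the tiling $\Til_{\lambda^n}$ that the ball meets. Write $\ell := \lambda^n$ for the sidelength of the tiles. First I would fix $x \in \T^2$ and a radius $\varepsilon > 0$, and let $\mathcal{C}$ be the family of all squares $Q \in \Til_\ell$ such that $Q \cap B_\varepsilon(x) \neq \emptyset$. Since each such $Q$ is contained in the slightly enlarged ball $B_{\varepsilon + \sqrt 2\,\ell}(x)$ (the diameter of a tile being $\sqrt 2\,\ell$), the union $U := \bigcup_{Q \in \mathcal{C}} Q$ satisfies $B_\varepsilon(x) \subseteq U \subseteq B_{\varepsilon + \sqrt 2\,\ell}(x)$.

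The key point is the cancellation hypothesis: by~\eqref{e:zerotil}, $\int_Q \rho\,dy = 0$ for every $Q \in \Til_\ell$, hence $\int_U \rho\,dy = 0$. Therefore
\[
\left| \int_{B_\varepsilon(x)} \rho\,dy \right|
= \left| \int_{B_\varepsilon(x)} \rho\,dy - \int_U \rho\,dy \right|
= \left| \int_{U \setminus B_\varepsilon(x)} \rho\,dy \right|
\le \|\rho\|_\infty \, |U \setminus B_\varepsilon(x)|
\le \|\rho\|_\infty \, |B_{\varepsilon + \sqrt 2\,\ell}(x) \setminus B_\varepsilon(x)|.
\]
The annular region has area $\pi\big((\varepsilon + \sqrt 2\,\ell)^2 - \varepsilon^2\big) = \pi\big(2\sqrt 2\,\ell\,\varepsilon + 2\ell^2\big)$. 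Dividing by $|B_\varepsilon(x)| = \pi \varepsilon^2$ gives
\[
\frac{1}{\|\rho\|_\infty}\left| \mint_{B_\varepsilon(x)} \rho\,dy \right|
\le \frac{2\sqrt 2\,\ell}{\varepsilon} + \frac{2\ell^2}{\varepsilon^2}.
\]
Now I would impose $\varepsilon \ge 4\sqrt 2\,\ell/\kappa$. Since $\kappa < 1$ we have $\varepsilon \ge 4\sqrt 2\,\ell > \ell$, so $\ell/\varepsilon < 1$ and in particular $2\ell^2/\varepsilon^2 \le 2\ell/\varepsilon$; thus the right-hand side is at most $(2\sqrt 2 + 2)\,\ell/\varepsilon \le 4\sqrt 2\,\ell/\varepsilon \le \kappa$. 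Hence~\eqref{e:geom} holds for every such $\varepsilon$ and every $x$, so the geometric mixing scale is at most $4\sqrt 2\,\lambda^n/\kappa$, which is~\eqref{e:peri}.

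I do not anticipate a serious obstacle here; the argument is elementary once one has the idea of enlarging the ball to a union of whole tiles and using the exact cancellation. The only mildly delicate point is bookkeeping the constant: one must check that the bound $4\sqrt2\,\ell/\kappa$ really does the job after absorbing the quadratic term $2\ell^2/\varepsilon^2$, which as shown above works precisely because $\kappa<1$ forces $\varepsilon > \ell$. (On the torus one should additionally note that if $\varepsilon$ is so large that $B_\varepsilon(x) = \T^2$ then the average is exactly zero by~\eqref{e:zerotil} summed over all tiles, so the estimate is trivial in that range; and if $\mathcal C$ wraps around, $U$ is still a union of tiles and the identity $\int_U\rho = 0$ still holds, so nothing changes.)
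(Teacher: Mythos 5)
Your argument is correct and is essentially the paper's proof: both rest on replacing the ball by a union of whole tiles (where the integral vanishes exactly by \eqref{e:zerotil}) and bounding the leftover by the area of an annulus of width comparable to $\sqrt{2}\,\lambda^n$. The only cosmetic difference is that the paper sums over the tiles meeting $\partial B_\varepsilon(x)$ and uses the symmetric annulus $B_{\varepsilon+\sqrt2\lambda^n}(x)\setminus B_{\varepsilon-\sqrt2\lambda^n}(x)$, whose area is exactly $4\sqrt2\,\pi\varepsilon\lambda^n$ with no quadratic remainder, so the constant drops out without your extra absorption step using $\kappa<1$.
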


\begin{proof}
We fix an arbitrary ball $B_\varepsilon(x)$. Using \eqref{e:zerotil} we can
estimate
\begin{align*}
\frac{1}{\| \rho\|_\infty} \left| \int_{B_\varepsilon(x)} \rho \, dy \right|
& \leq \frac{1}{\| \rho\|_\infty}
\sum_{\substack{Q \in \Til_{\lambda^n}  \\ \text{ $Q \cap
\bd B_\varepsilon(x) \ne\varnothing$}}}
\int_Q | \rho| \, dy \\
& \leq
\frac{1}{\| \rho\|_\infty}
\int_{B_{\varepsilon+\sqrt{2}\lambda^n}(x)\setminus B_{\varepsilon-\sqrt{2}\lambda^n}(x)} 
|\rho|\, dy \\
&\leq
\pi \big[ (\varepsilon+\sqrt{2}\lambda^n)^2 - (\varepsilon-\sqrt{2}\lambda^n)^2 \big]
=
4 \sqrt{2} \pi  \varepsilon \lambda^n\,,
\end{align*}
where the second inequality follows from elementary geometric considerations.
Hence,
\[
\frac{1}{\| \rho\|_\infty}  \left| \mint_{B_\varepsilon} \rho \, dx \right|
\leq
\frac{4 \sqrt{2} \,\lambda^n }{\varepsilon} 
\,,
\]
and the right-hand side is less or equal than $\kappa$
for every $\varepsilon$ greater or equal than
the quantity in \eqref{e:peri}. Therefore, from \eqref{e:geom} the
desired estimate on the geometric mixing scale follows.
\end{proof}

\begin{parag}[Regularity in time]\label{ss:timereg}
Under Assumption~\ref{ass}, the self-similar construction described above
ensures Sobolev regularity of the velocity field with respect to the
space variable, uniformly in time. No regularity with respect to the 
time variable is provided.

However, in all examples presented in this paper, the velocity field
is smooth in space and piecewise smooth in time.
If the velocity field is smooth in time on two adjacent time intervals,
and if it can be smoothly extended to the closure of each of them,
then the discontinuity across the interface of the two intervals can be eliminated
by a suitable reparametrization of time. More precisely, we replace
in each time interval $u$ and $\rho$ by
\[
\tilde{u}(t,x) := \eta'(t) \, u \big( \eta(t),x \big)
\, , \quad
\tilde{\rho}(t,x) := \rho \big( \eta(t),x \big)
\, ,
\]
where in each interval the smooth function $\eta$ is chosen to be increasing,
surjective, and constant in a
small (left or right) neighborhood of each endpoint of the interval.
It is immediate to check that $\tilde \rho$ solves the Cauchy problem 
for~\eqref{e:continuity} with velocity field $\tilde u$, that $\tilde u$ 
is smooth on the union of the closures of the two time intervals, and that 
the value of the solution at the endpoints of both
intervals has not changed.

We remark  that the argument above does not apply in case the velocity field
lacks a smooth extension to the closure of the time intervals. 
In this case the time discontinuity cannot be eliminated. 
This is indeed the case for the example presented in
Section~\ref{pinching}. The time singularity cannot be avoided there, 
given that the topological properties of smooth sets are not preserved
along the time evolution realized in that example.
\end{parag}

%
%	SECTION 4
%
\section{First geometric construction}
\label{basic}

In this section, we establish a geometric lemma that  is at the core 
of the construction of optimal mixers in our work.
More precisely, in Proposition~\ref{s:basic} below, we show that,
given a regular set $E$ in the plane that evolves smoothly in
time, we can construct a smooth, divergence-free velocity field $u$
such that the characteristic function of $E$ solves the continuity
equation \eqref{e:continuity} associated to $u$.

\medskip
We begin by introducing some notation.
Given a vector $v=(v_1,v_2)\in\R^2$, we denote by
$v^\perp$ the vector obtained by rotating  $v$  counter clockwise by $90^\circ$,
that is,
\[
v^\perp:=(-v_2,v_1)
\, .
\]
Given a set $E$ in $\R^2$ and a point $x\in\R^2$,
we denote  the distance of $x$
from $E$ by $\mathrm{dist}(x,E)$, namely:
\[
     \mathrm{dist}(x,E) := \inf\big\{|x-y| \, \mid \, y \in E\big\}.
\]
If there exists exactly one point $y\in E$
where such infimum is attained, this point will be
called the \emph{projection} of $x$ onto $E$ and
denoted by $p_E(x)$.
For every $r>0$, we shall also denote
 the open $r$-neighborhood of $E$ by $B(E,r)$:
\[
B(E,r) := \big\{ x\in\R^2 \, : \ \mathrm{dist}(x,E)<r \big\}
\, .
\]
We discuss next various notions of paths, which will be needed 
for the geometric construction. We consider only two kinds of paths.

\begin{parag}[Paths and curves]
\label{s:paths}

A \emph{closed path} is a continuous map
$\gamma=\gamma(s)$ from the circle,
which we identify with the one-dimensional
torus $\T^1 := \R/\mathbb{Z}$,
to the plane~$\R^2$. We require
that $\gamma$ is injective, of class $C^1$,
and satisfies $\dot\gamma(s)\ne 0$ for all~$s\in\T^1$.
A \emph{closed (oriented) curve} is the image $\Gamma=\gamma(\T^1)$
of a closed path $\gamma$.

A \emph{proper path} is a continuous map
$\gamma=\gamma(s)$ from the the real line $\R$
to the plane $\R^2$ which is proper, that is,
$|\gamma(s)|$ tends to $+\infty$ as $s\to\pm\infty$.
As before, we require that $\gamma$ is injective,
of class $C^1$, and satisfies $\dot\gamma(s)\ne 0$
for all $s\in\R$.
A~\emph{proper (oriented) curve} is the image $\Gamma=\gamma(\R)$
of a proper path $\gamma$.

When it is not necessary to distinguish between closed
and proper paths (or curves), we will simply refer to them 
as a path (or a curve), and denote the parametrization domain, 
which is either $\T^1$ or $\R$, by the letter $J$.
As usual, the regularity of a curve $\Gamma$
refers to the regularity of the parametrization $\gamma$.

Let $\Gamma$ be a curve parametrized by $\gamma$.
A \emph{sub-arc} of $\Gamma$ is any set of the form
$\gamma(J')$ where $J'$ is an interval contained in $J$;
a sub-arc is \emph{proper} if it is strictly
contained in $\Gamma$.

The unit tangent vector $\tau(x)$ and
the unit normal vector $\eta(x)$
at a point $x=\gamma(s)$ in $\Gamma$ are given by\,%
\footnoteb{Thanks to the minus sign in the definition of the unit normal
vector, if $\gamma$ is a counter-clockwise
parametrization of the boundary of an open set then $\eta$ coincides with
the outer normal to the boundary of the set. }
\[
\tau:= \dot\gamma / |\dot\gamma|
\, , \quad
\eta:= -\tau^\perp = - \dot\gamma^\perp / |\dot\gamma|
\, .
\]
In particular if $|\dot\gamma(s)|$ is equal to
the constant $\ell$ for all $s$ then
$\tau= \dot\gamma / \ell$,
$\eta= -\dot\gamma^\perp / \ell$, and
 the curvature $\kappa(x)$ of $\Gamma$ at
the point $x=\gamma(s)$ satisfies the equation
\[
-\kappa\, \eta = \ddot\gamma/\ell^2
\, .
\]

The \emph{tubular radius} of $\Gamma$
is the largest $r\ge 0$
such that the map $\Psi$ given by\,%
\footnoteb{With a slight abuse of notation, we sometimes
write the geometric quantities $\tau$, $\eta$ and $\kappa$
as functions of the parametrization variable $s$ instead of $x$.}
\begin{equation}
\label{e:param}
\Psi: (s,y) \mapsto \gamma(s) + y \, \eta(s)
\end{equation}
is injective on $J\times(-r,r)$.

If $\Gamma$ is of class $C^2$ then the tubular radius
is smaller than the \emph{curvature radius}
$1/|\kappa(x)|$ for every $x\in\Gamma$.

If $\Gamma$ is of class $C^k$ with
$k\ge 2$ and the tubular radius $r$ is strictly
positive, the map~$\Psi$ is a diffeomorphism
of class $C^{k-1}$ from $J\times(-r,r)$
to the tubular neighborhood~$B(\Gamma,r)$,
the projection $p_\Gamma(x)$ is well-defined
for every point~${x=\Psi(s,y)}$ in $B(\Gamma,r)$
and agrees with $\gamma(s)$.

If $\Gamma$ is closed and of class $C^2$, then the
tubular radius is strictly positive.
\end{parag}

\begin{parag}[Time-dependent paths and curves]
\label{s:tpaths}
Throughout the paper,  we often consider paths and curves that 
depend on time.
In this case, $\gamma$ is a map from the product $I\times J$
to the plane $\R^2$, where $I$ is a time interval (which could be 
open, closed, or neither), and $\Gamma$ is a map that assigns 
a curve $\Gamma(t)$ in $\R^2$ to every $t\in I$. 
The regularity of these paths and curves is then intended 
as the regularity of $\gamma$ in both variables.

In what follows, we reserve the letter~$t$ for the time
variable in $I$ and the letter~$s$ for the parametrization 
variable in $J$. 
Correspondingly, we write  $\bd_t\gamma$ for the partial derivative
with respect to~$t$ and $\dot\gamma$ for the partial derivative
with respect to~$s$.

The \emph{normal velocity} $v_\nn=v_\nn(t,x)$ of $\Gamma$
at time $t$ and at the point $x=\gamma(t,s)$ is the
normal component of the vector $\bd_t\gamma(t,s)$,
that is,
\[
v_\nn := \bd_t\gamma \cdot \eta
\, .
\]
We note that the normal velocity does not change under
strictly increasing reparametrizations of $\gamma$ 
in the variable $s$.

\end{parag}

\begin{parag}[Time-dependent domains]
\label{s:geomevol}
A time-dependent domain is a map $E$ that assigns an open 
subset $E(t)$ of $\R^2$ to every time $t$ in the interval $I$.
We say that~$E$ is of class $C^k$, if there exist
finitely many time-dependent curves $\Gamma_i$,
parametrized by paths
$\gamma_i:I\times J_i\to\R^2$ of class $C^k$,
such that for every  $t\in I$ the boundary $\bd E(t)$
can be written as disjoint union of
the curves $\Gamma_i(t)$.

We also require
that each parametrization is \emph{counter-clockwise},
which means that the normal vector $\eta$
defined in \S\ref{s:paths} agrees with the \emph{outer normal}
to the boundary $\bd E(t)$ at every time $t$ and at every
point $x=\gamma_i(t,s)$.
Thus the normal velocity $v_\nn$ defined in \S\ref{s:tpaths}
agrees with the \emph{outer normal velocity} of $\bd E(t)$.
\end{parag}

\begin{parag}[Compatible velocity fields]
\label{s:compatible}
Let $u$ be a time-dependent velocity field on~$\R^2$ of class $C^1$. 
We say that $u$ is \emph{compatible} with a time-dependent curve
$\Gamma$ if, for every time $t$ and  every
point $x\in\Gamma(t)$, the normal
velocity $v_\nn$ of $\Gamma$ agrees with
the normal component of $u$,
that is
\begin{equation}
\label{e:compat}
v_\nn = u \cdot \eta
\, .
\end{equation}
Accordingly, we say that $u$
is \emph{compatible} with a time-dependent domain $E$ of
class~$C^1$ if the normal component of $u$ agrees with
the outer normal velocity $v_\nn$ of $E$ at every time $t$
and at every point $x\in\bd E(t)$.

Given $t_0\in I$, we let $\{\Phi(t,\cdot): \, t\in I\}$ be the flow
associated to $u$ with initial time~$t_0$,  which means that
each $\Phi(t,\cdot)$ is an homeomorphism from $\R^2$ into $\R^2$,
and that for every $x_0\in\R^2$ the map $t\mapsto \Phi(t,x_0)$
solves the ordinary differential equation $\dot x=u(t,x)$
with initial condition $x(t_0)=x_0$.
Then the compatibility of $u$ and $\Gamma$ implies that
$\Gamma(t)=\Phi(t,\Gamma(t_0))$ for every $t\in I$. 
Similarly,  the compatibility of $u$ and $E$ implies that
$\bd E(t)=\Phi(t,\bd E(t_0))$, and consequently that
\[
E(t)=\Phi(t,E(t_0))
\quad\text{for every $t\in I$.}
\]
It is well-known that this last identity is equivalent to the fact 
that the characteristic function $\rho(t,x):= 1_{E(t)}(x)$
is a weak solution of the transport equation \eqref{e:transport}
and, hence, of the continuity equation~\eqref{e:continuity}.
\end{parag}

%\begin{remark}
%When $u$ is divergence-free,
%equation \eqref{e:transport} agrees with
%the continuity equation~\eqref{e:continuity} and therefore
%the velocity field $u$ is compatible with the time-dependent
%domain $E$ if and only if the characteristic function
%$\rho(t,x):= 1_{E(t)}(x)$ is a weak solution
%of~\eqref{e:continuity}.
%\end{remark}

In the rest of this section, we address the following question:
given a time-dependent curve $\Gamma$ or a time-dependent domain $E$,
characterize under which conditions there exists a compatible, 
divergence-free velocity field $u$.

We begin with a general result, which we then specialize
according to our specific needs.
The proof of this result is postponed until the end of this section. 

\begin{proposition}
\label{s:basic}
Let $\Gamma$ be a time-dependent curve of class $C^k$, $k\ge 2$, in~$\R^2$
on the time interval~$I$, 
and let $\bar r:I\to(0,+\infty)$ be a continuous function.
Assume that, for every $t\in I$, the normal velocity
$v_\nn(t,\cdot)$ has compact support\,%
\footnoteb{This requirement is clearly redundant when $\Gamma$ is closed.}
and satisfies
\begin{equation}
\label{e:constvol}
\int_{\Gamma(t)} v_\nn(t,x) \, d\sigma(x) = 0
\, .
\end{equation}
Then there exists a divergence-free
velocity field $u:I\times\R^2\to\R^2$ of class
$C^{k-2}$ that is compatible with $\Gamma$
and such that the support of $u(t,\cdot)$
is contained in~${B(\Gamma(t),\bar r(t))}$
for every $t\in I$.

If, in addition, for
every $t\in I$ the support of $v_\nn(t,\cdot)$ is
contained in a compact, proper sub-arc $G(t)$ of $\Gamma(t)$,
which depends continuously in $t$,%
\footnoteb{Continuity is defined in terms of the Hausdorff distance between
compact subsets of $\R^2$.}
then $u$ can be chosen in such a way
that the support of $u(t,\cdot)$ is contained in~${B(G(t),\bar r(t))}$
for every $t\in I$.
\end{proposition}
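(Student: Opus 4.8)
The plan is to build $u$ from a stream function (so that divergence-freeness is automatic) supported in the tubular neighbourhood $B(\Gamma(t),\bar r(t))$, and to arrange that its normal trace on $\Gamma(t)$ equals $v_\nn(t,\cdot)$. First I would exploit the tubular-coordinate diffeomorphism $\Psi(s,y)=\gamma(t,s)+y\,\eta(t,s)$ from \S\ref{s:paths}: since $\Gamma$ is of class $C^k$ with $k\ge 2$ and (after shrinking $\bar r$ if necessary, using that the tubular radius is a lower semicontinuous, strictly positive function of $t$, and for closed $\Gamma$ bounded below on compact time intervals; for proper $\Gamma$ one works locally in $t$) the map $\Psi$ is a $C^{k-1}$-diffeomorphism of $J\times(-\bar r(t),\bar r(t))$ onto $B(\Gamma(t),\bar r(t))$. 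In these coordinates I look for a stream function $\psi(t,\cdot)$ supported in the tube and define $u := \nabla^\perp\psi$, so that $u\cdot\eta$ along $\Gamma$ is (up to sign and the Jacobian factor $|\dot\gamma|$) the tangential derivative $\partial_s$ of the boundary values of $\psi$. Concretely, I set $\psi$ on $\Gamma(t)$ to be a fixed primitive of $v_\nn(t,\cdot)$ with respect to arclength, i.e. $\psi(t,\gamma(t,s)) := \int_{s_0}^{s} v_\nn(t,\gamma(t,\sigma))\,|\dot\gamma(t,\sigma)|\,d\sigma$; this is well-defined and periodic (or compactly supported, in the proper case) precisely because of the zero-flux hypothesis \eqref{e:constvol}, which says the total integral of $v_\nn\,d\sigma$ vanishes.

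Next I extend $\psi$ off the curve into the tube by multiplying by a cutoff: choose $\chi\in C^\infty(\R)$ with $\chi\equiv 1$ near $0$ and $\supp\chi\subset(-1,1)$, and set, in tubular coordinates, $\psi(t,\Psi(t,s,y)) := \chi\!\big(y/\bar r(t)\big)\cdot\big(\text{primitive of }v_\nn\big)(t,s)$, and $\psi := 0$ outside $B(\Gamma(t),\bar r(t))$. Then $u(t,\cdot):=\nabla^\perp\psi(t,\cdot)$ is divergence-free, supported in the tube, and one checks that on $\Gamma(t)$ (where $\chi=1$) the normal component $u\cdot\eta$ equals $\partial_s\big[\psi(t,\gamma(t,s))\big]/|\dot\gamma(t,s)| = v_\nn(t,\gamma(t,s))$, which is exactly the compatibility condition \eqref{e:compat}. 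The regularity count works out: $\Psi$ is $C^{k-1}$, $v_\nn$ comes from $\gamma\in C^k$ so its arclength primitive in the $s$-variable is $C^{k-1}$ jointly in $(t,s)$, the cutoff is smooth, hence $\psi$ is $C^{k-1}$ and $u=\nabla^\perp\psi$ is $C^{k-2}$, as claimed.

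For the final, ``localized'' refinement: if $\supp v_\nn(t,\cdot)$ lies in a compact proper sub-arc $G(t)$ depending continuously (in Hausdorff distance) on $t$, I fix one endpoint of $G(t)$ as the base point $s_0=s_0(t)$ of the arclength primitive, so that the primitive of $v_\nn$ vanishes outside $G(t)$ (again using \eqref{e:constvol}); then $\psi(t,\cdot)$ is supported in $B(G(t),\bar r(t))$ and so is $u(t,\cdot)$. Continuity of $G(t)$ and of $\bar r(t)$ is used to guarantee that $s_0(t)$ can be chosen continuously and that the resulting $u$ retains the stated regularity in $t$ away from the discrete set of times where one might have to switch coordinate patches in the proper case.

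The main obstacle I anticipate is purely the coordinate bookkeeping: making sure the tubular diffeomorphism $\Psi$ is valid on the full prescribed neighbourhood $B(\Gamma(t),\bar r(t))$ uniformly enough in $t$ — which may force one to first replace $\bar r$ by a smaller continuous function and note that shrinking the support of $u$ is harmless — and verifying that the normal-component identity survives the change of variables with the Jacobian $|\dot\gamma|$ correctly accounted for (the tangential derivative in the $x$-variable vs. the $s$-variable). Everything else — divergence-freeness, support, and the zero-flux condition being exactly what makes the primitive globally well-defined — is immediate from the stream-function construction.
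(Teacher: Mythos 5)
Your construction is essentially the one in the paper: the fixed-time statement is proved there as Lemma~\ref{s:basiclemma} by taking the stream function $\varphi(\Psi(s,y)) = g(y/r)\int_{s_0}^{s} v(\gamma(s'))\,|\dot\gamma(s')|\,ds'$, i.e.\ a cutoff in the normal variable times the arclength primitive of the normal velocity, which is well defined and vanishes outside the relevant arc thanks to \eqref{e:constvol}; the proposition is then obtained by letting the data depend on $t$. So the stream-function idea, the tubular coordinates, the replacement of $\bar r$ by a smaller radius below the tubular radius, and the role of the zero-flux condition all coincide with the paper's argument.

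The one place where your write-up falls short of the statement is the joint regularity in $t$, which is exactly the point the paper isolates as delicate: in the time-dependent formula $g\big(y/r(t)\big)\int_{s_0(t)}^{s} v_\nn\big(t,\gamma(t,s')\big)\,|\dot\gamma(t,s')|\,ds'$ one needs $r(t)$ and $s_0(t)$ to be (at least) $C^{k-1}$ in $t$, not merely continuous. For $r(t)$ this is easy: choose a smooth positive function below both $\bar r(t)$ and the tubular radius. For the localized version, however, you take $s_0(t)$ to be an endpoint of $G(t)$, which is only continuous in $t$; differentiating the primitive in $t$ produces a term involving $\dot s_0(t)\,v_\nn\big(t,\gamma(t,s_0(t))\big)$ that need not make sense, and your concession that regularity holds only ``away from a discrete set of times'' is weaker than the claimed $C^{k-2}$ regularity on all of $I\times\R^2$. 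The paper instead chooses $s_0(t)$ smoothly in $t$ and \emph{outside} $G(t)$ (this is where properness of the sub-arc is used). Your endpoint choice can in fact be salvaged: locally in $t$ the primitive is unchanged if the base point is frozen at a fixed parameter value just outside $G(t)$, since the integrand vanishes between that point and the endpoint of $G(t')$ for $t'$ near $t$; but this observation (or some equivalent smooth selection of $s_0(t)$) must be made explicit, otherwise the stated time regularity is not justified.
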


\begin{remark}\label{s:rembasic}
\quad
%%%%%
\begin{enumerate}
[label=(\roman*),ref=(\roman*), 
leftmargin= 0 pt, itemsep=2pt, itemindent=30 pt]

\item 
%\label{s:rembasic.1} 
If $\Gamma$ is closed, Assumption \eqref{e:constvol}
is necessary, in the sense that it is satisfied by every
time-dependent closed path $\Gamma$ compatible
with a divergence-free velocity field $u$.
Indeed, for a fixed $t\in I$, we let $E(t)$
be the bounded open set with boundary $\Gamma(t)$,
and we denote by $\eta_{E(t)}$ the outer normal to $\bd E(t)$.
Then the divergence theorem yields
\[
\int_{\Gamma(t)} v_\nn \, d\sigma
=\pm\int_{\bd E(t)} u\cdot \eta_{E(t)} \, d\sigma
=\pm\int_{E(t)} \dive u \, dx
=0
\, ,
\]
where the sign $\pm$ depends on whether the normal
to $\Gamma(t)$ agrees with $\eta_{E(t)}$ or with~$-\eta_{E(t)}$.

\item 
%\label{s:rembasic.2} 
A modification of the previous argument shows that 
Assumption~\eqref{e:constvol} is necessary if $\Gamma$ is proper
and both $v_\nn$ and $u$ have compact support.
However, if we do not require that $u$ has compact support,
then we can drop both the assumption that $v_\nn$ has compact 
support and \eqref{e:constvol}.

\item 
\label{s:rembasic.3} 
If $\Gamma$ is a closed curve and agrees with the boundary 
of a bounded, time-dependent domain $E$, then it is well-known that
\[
\int_{\Gamma(t)} v_\nn \, d\sigma
= \frac{d}{dt} |E(t)|
\quad\text{for every $t\in I$.}
\]
Thus Assumption \eqref{e:constvol} is equivalent
to say that the area of $E(t)$ is constant in $t$.

\item
%\label{s:rembasic.4} 
Proposition~\ref{s:basic} can be generalized to higher dimensions, 
for instance to time-dependent surfaces with codimension one in $\R^n$, 
but such extensions require quite different proofs. 
\end{enumerate}
\end{remark}

We consider now the special case of a
curve that evolves homothetically in time.
We begin with a definition and a few remarks.

\begin{parag}[Homothetic curves]
\label{s:homcurve}
We say that a time-dependent curve
$\Gamma$ on the time interval $I$ is homothetic
in time if it can be represented as
\begin{equation}
\label{e:omot}
\Gamma(t)
= \lambda(t) \, \bar\Gamma
= \big\{ \lambda(t) \, x \, : \ x\in \bar\Gamma \big\},
\end{equation}
for some fixed curve $\bar\Gamma$ and
some function $\lambda:I\to(0,+\infty)$.

Let $\bar\gamma:J\to\R^2$ be a path that parametrizes $\bar\Gamma$.
Then the time-dependent path $\gamma:I\times J\to\R^2$ given by
\begin{equation}
\label{e:omot1}
\gamma(t,s):=\lambda(t) \, \bar\gamma(s)
\end{equation}
is a parametrization of $\Gamma$. 
Hence $\Gamma$ is of class $C^k$,
when $\bar\Gamma$ and $\lambda$ are of class~$C^k$.

Let $\bar\eta$ be the normal to $\bar\Gamma$ and let
$\bar v:\bar\Gamma\to\R$ be the function defined by
\begin{equation}
\label{e:omot1.1}
\bar v(x):= x\cdot \bar \eta(x)
\, .
\end{equation}
A simple computation starting from \eqref{e:omot1}
shows that the normal vector and the normal velocity
of $\Gamma$ (at $t\in I$ and $x\in\Gamma(t)$) are  given by
\begin{equation}
\label{e:omot2}
\eta(t,x)
= \bar\eta \big( x/\lambda(t) \big)
\, , \quad
v_\nn(t,x)
= \lambda'(t) \, \bar v \big( x/\lambda(t) \big)
\, .
\end{equation}

Finally, let $\bar u$ be any autonomous velocity field on $\R^2$
such that
\begin{equation}
\label{e:compat2}
\bar u(x) \cdot \bar \eta(x) = \bar v(x)
\end{equation}
for every $x\in \bar \Gamma$.
Then, using \eqref{e:omot2} one readily checks that
the time-dependent velocity field $u:I\times\R^2\to\R^2$ 
defined by
\begin{equation}
\label{e:omot3}
u(t,x):=\lambda'(t) \, \bar u \big( x/\lambda(t) \big)
\end{equation}
is compatible with the time-dependent curve $\Gamma$.
\end{parag}

The next result specializes the statement of Proposition~\ref{s:basic}
to the case of homothetic curves.

\begin{proposition}
\label{s:basicomot}
Let the function $\lambda: I\to(0,+\infty)$ and
the proper curve $\bar \Gamma$, both of class $C^k$, $k\ge 2$,
define a homothetic curve $\Gamma$ as in \eqref{e:omot}.
Let $\bar r$ denote a given positive number. 
Assume that there exists a compact sub-arc~$\bar G$ of $\bar\Gamma$
that contains the support of the function $\bar v$ defined
in \eqref{e:omot1.1}.
Assume, in addition, that
\begin{equation}
\label{e:constvol2}
\int_{\bar \Gamma} \bar v \, d\sigma = 0
\, .
\end{equation}
Then the following statements hold:
%%%%%
\begin{enumerate}
[label={\rm(\roman*)},ref=(\roman*),
itemsep=2pt, leftmargin=30 pt]

\item\label{s:basicomot.1}
there exists an autonomous velocity field
$\bar u$ on $\R^2$ of class $C^{k-2}$ which
satisfies~\eqref{e:compat2}, is divergence-free,
and its support is contained in $B(\bar G,\bar r)$;

\item\label{s:basicomot.2}
if $u$ is the time-dependent
velocity field defined in \eqref{e:omot3},
then $u$ is of class $C^{k-2}$, divergence-free, and compatible
with $\Gamma$, and the support of $u(t,\cdot)$
is contained in $B(\lambda(t)\, \bar G, \lambda(t)\, \bar r)$
for every $t\in I$.
\end{enumerate}
\end{proposition}

\begin{remark}\label{s:rembasic3}
\quad
%%%%%
\begin{enumerate}
[label=(\roman*),ref=(\roman*), 
leftmargin=0pt, itemsep=2pt, itemindent=30pt]

\item 
The formula for the normal velocity in \eqref{e:omot2}
shows that assumption~\eqref{e:constvol2} in
Proposition~\ref{s:basicomot} plays the role
of assumption \eqref{e:constvol} in Proposition~\ref{s:basic}.

\item 
Proposition~\ref{s:basicomot} does not apply to closed curves,
because condition \eqref{e:constvol2} is never verified if $\bar \Gamma$
is closed. Let indeed $E$ be the bounded open set with boundary~$\bar \Gamma$;
then the divergence theorem yields
\[
\int_{\bar\Gamma} \bar v \, d\sigma
= \int_{\bd E} x \cdot \bar \eta (x) \, d\sigma(x)
= \pm\int_E \dive(x) \, dx = \pm 2|E| \ne 0
\, ,
\]
where the sign $\pm$ depends on whether $\bar \eta$
is the inner or the outer normal of $E$.

\item
\label{s:rembasic3.3}
It is easy to check that the function $\bar v$ has compact support
if and only if the curve $\bar \Gamma$ agrees out of some  ball
$B=B(0,r)$ with two half-lines $L_-,L_+$ starting from the origin.
If in addition $\bar \Gamma$ is the boundary of an open set $E$
and we denote by $T$ the open set delimited by the half-lines
$L_-,L_+$ which agrees with $E$ outside $B$ (see Figure~\ref{f:commarea}),
then 
\[
\int_{\bar \Gamma} \bar v \, d\sigma
= \pm 2 \big( |E \setminus T| - |T \setminus E| \big)
\, .
\]
In particular assumption \eqref{e:constvol2}
is equivalent to saying that 
$E\setminus T$ and $T\setminus E$
have the same area.
\end{enumerate}
\end{remark}

\begin{figure}[ht]
\begin{center}
  \includegraphics[scale=1]{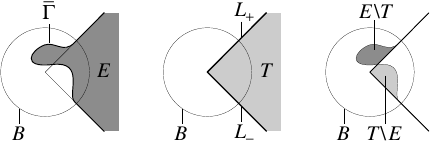}
  \caption{}%The situation described in Remark~\ref{s:rembasic3}\ref{s:rembasic3.3}.
  \label{f:commarea}
\end{center}
\end{figure}

The rest of this section is devoted to the proofs
of Propositions~\ref{s:basic} and \ref{s:basicomot}.
The key step is contained in Lemma~\ref{s:basiclemma} below.

\begin{parag}[Potential of a velocity field]
\label{s:potential}
Let $u:\R^2\to\R^2$ be a continuous velocity field
and let $\varphi:\R^2\to\R$ be a function of class $C^1$.
We say that $\varphi$ is a \emph{potential} for $u$ if
\[
u=\nabla^\perp \varphi
\, ,
\]
where $\nabla^\perp:=(-\bd_2,\bd_1)$.
Note that $u$ admits a potential if and only if it is divergence-free.
In the fluid dynamics literature, such $\varphi$ is called a stream
function for the flow generated by $u$.
\end{parag}

\begin{lemma}
\label{s:basiclemma}
Let  $\Gamma$ be a given curve, $v$ be a given function 
on $\Gamma$, both of class $C^k$ with $k\ge 2$, 
and $\bar r$ a positive number. Assume that the support 
of $v$ is contained in a compact (not necessarily proper) 
sub-arc~$G$ of $\Gamma$ and  that
\begin{equation}
\label{e:constvol3}
\int_\Gamma v \, d\sigma = 0
\, .
\end{equation}
Then there exists a divergence-free, autonomous velocity field
$u$ on $\R^2$ of class $C^{k-2}$, such that the normal component
of $u$ on $\Gamma$, that is, $u\cdot \eta$, agrees with $v$
and such that the support of $u$ is contained in $B(G,\bar r)$.
\end{lemma}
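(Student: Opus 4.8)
The plan is to construct the potential $\varphi$ of the desired velocity field $u$ explicitly in a tubular neighbourhood of $\Gamma$, using the diffeomorphism $\Psi(s,y) = \gamma(s) + y\,\eta(s)$ from \S\ref{s:paths}. First I would fix a parametrization $\gamma: J \to \R^2$ of $\Gamma$ by arclength (so $|\dot\gamma| \equiv \ell$ for a constant $\ell$, or at least smoothly; let me assume unit speed for simplicity), and let $r_0>0$ be a positive lower bound for the tubular radius of $\Gamma$ on a neighbourhood of the compact sub-arc $G$; shrinking $\bar r$ if necessary, assume $\bar r < r_0$, so that $\Psi$ is a $C^{k-1}$ diffeomorphism from $J\times(-\bar r,\bar r)$ onto $B(\Gamma,\bar r)$ (near $G$). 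The idea is that if $\varphi$ is constant along $\Gamma$ then $u = \nabla^\perp\varphi$ is tangent to $\Gamma$ — wrong direction; instead, I want $u\cdot\eta = v$, i.e.\ the tangential derivative of $\varphi$ along $\Gamma$ should encode $v$. Precisely, writing $\Phi(s):=\varphi(\gamma(s))$, a computation gives $u(\gamma(s))\cdot\eta(\gamma(s)) = \nabla^\perp\varphi\cdot\eta = -\nabla\varphi\cdot\tau^{\perp\perp}=\ldots$; the upshot is $u\cdot\eta = -\partial_s\big(\varphi\circ\gamma\big)/\ell$ along $\Gamma$. Hence I should define, along $\Gamma$,
\[
\varphi(\gamma(s)) := -\int_{s_0}^{s} v(\gamma(\sigma))\,d\sigma \cdot \ell^{-1}\cdot(\text{appropriate sign}),
\]
where $s_0$ is a point with $\gamma(s_0)\notin G$; the hypothesis \eqref{e:constvol3} that $\int_\Gamma v\,d\sigma = 0$ is exactly what guarantees this primitive is well-defined (single-valued) on a closed curve and is compactly supported (constant outside $G$) on a proper curve. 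In either case $\varphi\circ\gamma$ is of class $C^{k}$, constant — say equal to $0$ after normalization — outside (a neighbourhood of) $G$.

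Next I would extend $\varphi$ off the curve into the tubular neighbourhood. The natural choice is to make $\varphi$ independent of the normal coordinate near $\Gamma$: set $\varphi(\Psi(s,y)) := \chi(y)\,\varphi(\gamma(s))$, where $\chi:\R\to[0,1]$ is a smooth cutoff with $\chi\equiv 1$ on $(-\bar r/2,\bar r/2)$ and $\supp\chi\subset(-\bar r,\bar r)$; then extend $\varphi$ by $0$ outside $B(\Gamma,\bar r)$. Because $\varphi\circ\gamma$ is constant $(=0)$ outside a neighbourhood of $G$, the resulting $\varphi$ is supported in $B(G,\bar r)$ (after also localizing in the $s$-variable if $G$ is a proper sub-arc — one multiplies by a cutoff in $s$ that is $1$ on $G$, harmless since $\varphi\circ\gamma$ already vanishes there). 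The function $\varphi$ so defined is $C^{k-1}$: it is $C^k$ in the $s$-direction and smooth in the $y$-direction, but the change of variables $\Psi$ costs one derivative, so $\varphi\in C^{k-1}$ and $u := \nabla^\perp\varphi \in C^{k-2}$, matching the claimed regularity. By construction $u$ is divergence-free (it is a perp-gradient), has support in $B(G,\bar r)$, and on $\Gamma$ we have $y=0$, $\chi(0)=1$, so $\varphi|_\Gamma = \varphi\circ\gamma$ as prescribed, whence $u\cdot\eta = v$ on $\Gamma$ by the computation above. One should double-check the normal derivative term: at $y=0$, $\partial_y(\chi(y)\varphi(\gamma(s))) = \chi'(0)\varphi(\gamma(s)) = 0$ since $\chi\equiv1$ near $0$, so the only contribution to $u\cdot\eta$ on $\Gamma$ is the tangential one, as desired.

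The main obstacle, and the place requiring the most care, is the regularity bookkeeping through the coordinate change $\Psi$: one must verify that $\Psi^{-1}$ is $C^{k-1}$ (which is where the hypothesis $k\ge 2$ and the strict positivity of the tubular radius near $G$ enter), and that the resulting $u$ genuinely extends by zero across $\{|y|=\bar r\}$ and, for a proper curve, across the ``ends'' of $G$ — this is where one uses that $\varphi\circ\gamma$ is \emph{constant} (not merely small) outside $G$, a consequence of $v$ being supported in $G$ together with the vanishing-integral condition \eqref{e:constvol3}. A secondary point is the non-constant-speed case: if $\Gamma$ is not parametrized by arclength one simply replaces $d\sigma$ by $|\dot\gamma(\sigma)|\,d\sigma$ in the primitive and carries the factor $|\dot\gamma|$ through; alternatively, reparametrize by arclength at the outset, noting this does not change $\eta$, $v$, or $d\sigma$. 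Everything else — the identity $u\cdot\eta = v$ on $\Gamma$, divergence-freeness, the support bound — then follows from the explicit formula by routine differentiation.
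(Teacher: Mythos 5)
Your proposal is correct and follows essentially the same route as the paper's proof: you build the stream function $\varphi$ in tubular coordinates $\Psi(s,y)$ as the primitive of $v$ along $\Gamma$ (based at a point outside $G$), multiply by a cutoff in the normal variable, extend by zero, and use the zero-integral hypothesis \eqref{e:constvol3} to ensure the primitive vanishes identically outside $G$ so that the support lands in $B(G,\bar r)$. The regularity count ($\varphi\circ\Psi\in C^k$, one derivative lost through $\Psi^{-1}$, hence $u=\nabla^\perp\varphi\in C^{k-2}$) also matches the paper exactly.
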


\begin{proof}
We describe the proof in the case $J=\R$ (recall that  $J$ is the
domain of the parametrization of the curve $\Gamma$); the case $J=\T^1$
requires few straightforward modifications.
In view of \S\ref{s:potential}, it suffices to find a potential
$\varphi:\R^2\to\R$ of class~$C^{k-1}$ with support contained
in $B(G,\bar r)$, such that
\begin{equation}
\label{e:compat4}
\bd_\tau\varphi = v
\quad\text{on $\Gamma$,}
\end{equation}
where $\tau$ is the tangent vector to $\Gamma$,
and then take $u:=-\nabla^\perp\varphi$.

Let $\gamma:\R\to\R^2$ be a parametrization of $\Gamma$.
For the construction of $\varphi$ we choose:
%%%%%
\begin{itemize}
[leftmargin=30pt, itemsep=2pt]

\item
a point $x_0=\gamma(s_0)\in\Gamma$ and,
if $G$ is a proper sub-arc of~$\Gamma$, we
further require that $x_0$ does not belong to $G$;
\item
a smooth function $g:\R\to\R$ with support
contained in $[-1/2,1/2]$ such that $g(0)=1$;
\item
a number $ r \in (0 , \bar r]$ strictly smaller
than the tubular radius of $\Gamma$.
\end{itemize}

\smallskip
Next, we consider the diffeomorphism $\Psi:\R\times(-r,r) \to B(\Gamma,r)$
defined in~\eqref{e:param}, and for every $x=\Psi(s,y)\in B(\Gamma,r)$
we set
\begin{equation}
\label{e:defpot}
\varphi(x)
=\varphi(\Psi(s,y))
:=  g(y/r) \, \int_{s_0}^s v(\gamma(s')) \, |\dot\gamma(s')| \, ds'
\, .
\end{equation}

\smallskip
If $x$ belongs to $\Gamma$, then $x=\gamma(s)=\Psi(s,0)$. 
Therefore, $\varphi(x)$ is the integral of~$v$ along the (oriented)
sub-arc of $\Gamma$ starting from $x_0$ and ending at $x$, so that 
the restriction of $\varphi$ to $\Gamma$ is a primitive of $v$
and  satisfies \eqref{e:compat4}.

Now, Formula~\eqref{e:defpot} shows that $\varphi\circ\Psi$
is a function of class $C^k$ on $\R\times(-r,r)$ with support 
contained in $\R\times[-r/2,r/2]$. 
Since $\Psi$ is a diffeomorphism of class $C^{k-1}$ and maps
$\R\times[-r/2,r/2]$ into the closure of $B(\Gamma,r/2)$, we deduce
that~$\varphi$ is a function of class $C^{k-1}$ on $B(\Gamma,r)$
with support contained in the closure of~$B(\Gamma,r/2)$.
We complete the construction extending $\varphi$ by $0$ to the 
complement of this neighborhood in~$\R^2$.

\smallskip
It remains to check that the support of $\varphi$
is contained in $B(G,\bar r)$. When~${G=\Gamma}$,
this follows from the fact that the support of $\varphi$
is contained in the closure of~$B(\Gamma,r/2)$, which in turn 
is contained in $B(\Gamma, \bar r)$. When $G=\gamma([s_1,s_2])$ 
is instead a proper sub-arc of $\Gamma$, we have that:
%%%%%
\begin{itemize}
[leftmargin=30pt, itemsep=2pt]

\item
$v(\gamma(s))=0$ for $s\notin [s_1,s_2]$ by assumption;
\item
$s_0\notin [s_1,s_2]$ by the choice of $x_0$;
\item
Condition \eqref{e:constvol3} can be re-written as
$\int_{s_1}^{s_2} v(\gamma(s')) \, |\dot\gamma(s')|\, ds'=0$.
\end{itemize}
Putting together these facts and recalling the choice of $g$,
one easily shows that~${\varphi(\Psi(s,y))=0}$,
if $s\notin[s_1,s_2]$ or $y\notin[-r/2,r/2]$,
and then
\[
\mathrm{supp}(\varphi)
\subset \Psi \big( [s_1,s_2]\times[-r/2,r/2] \big)
\subset B(G,\bar r)
\, .
\qedhere
\]
\end{proof}

\smallskip

\begin{proof}[Proof of Proposition~\ref{s:basicomot}]
Statement~\ref{s:basicomot.1} follows from Lemma~\ref{s:basiclemma},
while statement~\ref{s:basicomot.2} is an immediate consequence 
of~\ref{s:basicomot.1} and~\S\ref{s:homcurve}.
\end{proof}

\begin{proof}[Proof of Proposition~\ref{s:basic}]
For every $t\in I$, we use Lemma~\ref{s:basiclemma}
to construct a divergence-free velocity field $u(t,\cdot)$
of class $C^{k-2}$, which satisfies the compatibility condition
\eqref{e:compat} at time $t$, and the support of which is contained
in $B(G(t),\bar r(t))$.

However, this construction gives only that $u$ is of class $C^{k-2}$
in the variable $x$.
To show that $u$ can be taken of class $C^{k-2}$ in $t$ and $x$,
we re-examine the proof of Lemma~\ref{s:basiclemma}.
The key point in that proof is the
regularity of class $C^{k-1}$ in the variables $t,s,y$
of the right-hand side of formula \eqref{e:defpot},
which in our specific case is given by 
\[
g( y/r(t) ) \, \int_{s_0(t)}^s
v_\nn(t,\gamma(t,s')) \, |\dot\gamma(t,s')| \, ds'
\, .
\]
It is clear that this expression has the required regularity
provided that we choose~$r(t)$ and $s_0(t)$ at least of class $C^{k-1}$
in $t$.

Since both $\bar r(t)$ and the tubular radius of $\Gamma(t)$
are continuous, strictly positive functions of $t$,
it is always possible to choose $r(t)$ smaller than both,
strictly positive, and smooth in $t$.

If we only require that the support of $u$ is contained
in $B(\Gamma(t),\bar r(t))$, we can take~$s_0(t)$ constant in $t$.
If we require that the support of $u$ is contained in~${B(G(t),\bar r(t))}$, 
then we can again choose $s_0(t)$ smooth in $t$, but the existence of such 
a choice is more delicate, and relies on the fact that $G(t)$ is
a proper sub-arc for all $t\in I$.
\end{proof}

%
%	SECTION 5
%

\section{First example: pinching}
\label{pinching}

In this section we verify Assumption~\ref{ass} for $s=1$ and for 
every $1 \leq p<\infty$.
In~fact, we obtain slightly more. 
We construct a velocity field  $u_0$ and a solution~$\rho_0$ of the 
continuity equation \eqref{e:continuity} on~$\R^2$, both compactly 
supported in the open unit square $\Qone$, where the velocity has Sobolev 
regularity $W^{s,p}$, and we do so for each $s$ and $p$ such that
$W^{s,p}$ does not embed continuously in the Lipschitz class, that~is,
\begin{equation}
\label{e:range}
\text{
$s<1$ and $p\le \infty$,
or $s\ge 1$ and $\displaystyle p < \frac{2}{s-1}$.
}
\end{equation}
This first construction exploits topological changes to the evolution 
of a certain sets and, therefore, cannot be realized with a Lipschitz 
velocity field.

\smallskip
More precisely, we give an example of $u_0$ and $\rho_0$,
both defined for $0 \le t \le 1$, such that:
%%%%%
\begin{enumerate}
[label=(\alph*),
leftmargin=30pt, itemsep=2pt]

\item 
$u_0$ is a time-dependent, bounded, and divergence-free velocity 
field on $\R^2$, which is compactly supported on the open unit 
square $\Qone$.
The field $u_0$ is smooth in both variables $t$ and $x$ for $t\ne k/8$, 
$k=1,\dots,7$, and bounded in $\dot{W}^{s,p}(\R^2)$
uniformly in $t$ for~$s$ and~$p$ in the range~\eqref{e:range};

\item 
$\rho_0$ is of the form $\rho_0(t,\cdot) = 1_{E(t)} - \pi/16$,
where $E(t)$ is a time-dependent domain in $\R^2$ defined for 
$0\le t \le 1$, with the property that its closure is contained
in $\Qone$ and its area equals $\pi/16$ (thus~$\rho_0(t,\cdot)$ 
has average zero).
The set $E(t)$ is continuous in $t$,%
\footnoteb{Again, continuity is defined in terms of the Hausdorff 
distance between compact subsets.}
and smooth for $t\ne k/8$, $k=1,\dots,7$;

\item 
$E(0)$ is the disk with center $0$ and radius $1/4$,
while $E(1)$ is the union of the four disks with
centers $(\pm 1/4,\pm 1/4)$ and radius $1/8$.
\end{enumerate}
Since  $u_0$ and $\rho_0$ have compact support in the open square 
$\Qone$, we can canonically identify them with fields and functions 
defined on the torus~$\T^2$. 
Remark~\ref{r:tay}\ref{r:tay:4} ensures that $u_0$ is then bounded 
in $\dot{W}^{s,p}(\T^2)$ for the same $s$ and $p$.

Therefore, Assumption~\ref{ass} is satisfied for $s$ and $p$ in the 
range~\eqref{e:range}, in particular for $s=1$ and for 
every~$1 \leq p<\infty$.

\medskip
Specifically, we construct a time-dependent domain $E$, satisfying 
the conditions listed above,  and a velocity field $u_0$ defined for
$t\ne k/8$, $k=1,\dots,7$, which is smooth and compatible with $E$.
As a consequence,  the characteristic function $1_{E(t)}$ is a weak 
solution of the continuity equation \eqref{e:continuity} in the open
time intervals $((k-1)/8,k/8)$, $k=1,\dots,8$.
The fact that it is also a solution on the time interval $[0,1]$
is ensured by the continuity in $t$.
The set $E(t)$ for $t=k/8$ with $k=0,\dots,4$ and $t=1$
is described in Figure~\ref{f:evol1}.

\begin{figure}[ht]
\begin{center}
  \includegraphics[scale=1.2]{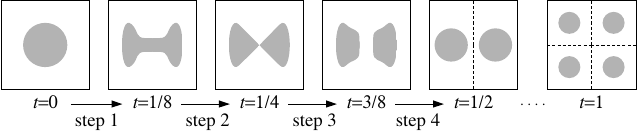}
  \caption{The set $E(t)$ for $t=k/8$ with $k=0,\dots,4$ and $t=1$.}
  \label{f:evol1}
\end{center}
\end{figure}

To describe this construction in more details, we denote by $B$
the open disk with center $0$ and radius $1/4$, and by $T$ the cone
in $\R^2$ such that $|x_2| < |x_1|$.
Next, for $t=1/8$, $t=1/4$, and $t=3/8$ we choose a smooth set $E(t)$
shaped as in~Figure~\ref{f:evol1} making sure that
%%%%%
\begin{enumerate}
[label=(\alph*), ref=(\alph*), start=4,
itemsep=2pt, leftmargin=30pt]

\item
$E(t)$ is symmetric with respect to both axes;
\item
$E(t)$ has area $\pi/16$;
\item
$E(t)\setminus B$ is the same set at the times $t$ chosen above 
(i.e., $t=1/8$, $t=1/4$, and $t=3/8$);
\item
\label{first.7}
$(E(1/8)\setminus T) \cap B$ and $(T\setminus E(1/8))\cap B$ 
have the same area. 
\end{enumerate}

\medskip
In the rest of this section we describe the construction
of $E(t)$ and $u_0(t,\cdot)$ for $t$
in the time intervals $[0,1/8]$ (Step~1 in Figure~\ref{f:evol1})
and $(1/8,1/4)$ (Step~2 in Figure~\ref{f:evol1}).
The construction in the remaining time intervals (steps)
is similar, and is omitted.

\medskip
\emph{Step~1: construction of $E(t)$ and $u_0(t,\cdot)$ 
for $0 \le t \le 1/8$.}
Since the sets $E(0)$ and $E(1/8)$ are both smooth and have area 
$\pi/16$, we can clearly find a time-dependent $E(t)$ for $0<t<1/8$ 
that deforms $E(0)$ to $E(1/8)$ such that $E(t)$ has constant area 
$\pi/16$ and such that the map $t\mapsto E(t)$ is smooth on $[0,1/8]$. 
Then, by Proposition~\ref{s:basic} we can find a smooth velocity 
field $u_0:[0,1/8]\times\R^2\to\R^2$ that is divergence-free and 
compatible with $E$.
Moreover, since $\bd E(t)$ is contained in~$\Qone$, we can assume 
that the support of $u_0$ is contained in $\Qone$ for all $t$.
In particular all positive Sobolev norms of $u_0(t,\cdot)$ are 
uniformly bounded in $t$.

\begin{figure}[h]
\begin{center}
  \includegraphics[scale=1]{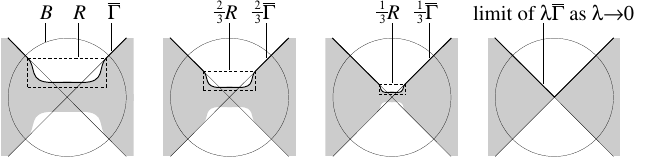}
  \caption{The curve $\ove{\Gamma}$, the homothetic copies $\lambda\,\ove{\Gamma}$
  with $\lambda=2/3$, $\lambda=1/3$, and their limit as $\lambda\to 0$.
  The circle is centered at $0$ and has radius $1/4$. The set $E(t)$ is in gray.}
  \label{f:evol3}
\end{center}
\end{figure}

\emph{Step~2: construction of $E(t)$ and $u_0(t,\cdot)$ for $1/8 < t < 1/4$.}
Let $\ove{\Gamma}$ be the proper curve drawn in Figure~\ref{f:evol3}.
More precisely, $\ove{\Gamma}$ is defined outside $B$ by the equation
$x_2=|x_1|$, and agrees in $B$ with the connected component
of the boundary $\bd E(1/8)\cap B$ that lies in the upper half plane.

We pick a smooth  decreasing function
$\lambda$ on $[1/8,1/4)$ such that $\lambda(1/8)=1$
and $\lambda(t)$ tends to $0$ as $t\to 1/4$. 
The function $\lambda$ will be explicitly defined later
in order to satisfy further requirements.

Then we select the sets $E(t)$, $1/8<t<1/4$, 
satisfying the following requirements in addition to 
preserving area and smoothness in time:
%%%%%
\begin{enumerate}
[label=(\alph*), ref=(\alph*), start=8,
itemsep=2pt, leftmargin=30pt]

\item
$E(t)$ agrees with $E(1/8)$ outside $B$;

\item
\label{first.9}
$\bd E(t) \cap B$ has two connected components, which are symmetric
with respect to both axes, and the component that lies in the upper 
half plane agrees with $\lambda(t) \, \ove{\Gamma}$ in $B$ (the set 
$E(t)$ is drawn in gray in Figure~\ref{f:evol3} for $\lambda(t)=1$, 
$\lambda(t)=2/3$, and $\lambda(t)=1/3$).
\end{enumerate}

By Remark~\ref{s:rembasic3}\ref{s:rembasic3.3}, Property~\ref{first.7} 
above implies that the curve $\ove{\Gamma}$ satisfies \eqref{e:constvol2}
and, therefore, we can apply Proposition~\ref{s:basicomot}\ref{s:basicomot.2},
to obtain a smooth, divergence-free velocity field $w:[1/8,1/4)\times\R^2\to\R^2$ 
that is compatible with the homothetic curve $\lambda(t)\,\ove{\Gamma}$.
Moreover, $w(t,\cdot)$ is compactly supported in the upper half-plane 
$\R\times(0,+\infty)$ for all $t$ (specifically, we can require that the 
support is contained in the dashed rectangle $R$ in Figure~\ref{f:evol3}).

Finally, we take $u_0: [1/8,1/4)\times\R^2\to\R^2$ equal to $w$ in the 
upper half-plane, and we extend it to the lower half-plane by reflection.
In this way,  $u_0$ is still smooth and compactly supported, and by 
Property~\ref{first.9} above it is compatible with the time-dependent
domain $E$ inside the ball $B$.
On the other hand, $u_0$ vanishes outside the ball $B$ and, therefore, 
is compatible with the set $E \setminus B$, which is constant in time 
(recall again Property~\ref{first.7}).
In conclusion, $u_0$ is compatible with~$E$.

It remains to choose $\lambda$ so that $u_0(t,\cdot)$ is bounded in
$\dot{W}^{s,p}(\R^2)$ uniformly in $t\in [1/8,1/4)$ for every $s,p$
as in \eqref{e:range}.To this end, we recall that by 
Proposition~\ref{s:basicomot}\ref{s:basicomot.2},
the field $u_0$ can be written in the form
\[
u_0(t,x) = \lambda'(t) \, \bar u \big( x/\lambda(t) \big),
\]
where $\bar u:\R^2\to\R^2$ is smooth and compactly supported.
Therefore, using~\eqref{e:scaleR}, for every $t$ we have 
\begin{align*}
\| u_0(t,\cdot) \|_{\dot{W}^{s,p}(\R^2)}
= |\lambda'(t)| \, |\lambda(t)|^{2/p-s} \| \bar u \|_{\dot{W}^{s,p}(\R^2)}
\, .
\end{align*}
Now, a simple computation shows that $u_0(t,\cdot)$ is
bounded in $\dot{W}^{s,p}(\R^2)$ uniformly in time
for all~$s$ and~$p$ as in~\eqref{e:range} if we take
\[
\lambda(t) := \exp \Big( 2- \frac{1}{1-4t} \Big)
\, .
\]
In particular $u_0$ is a bounded function in both space and time.

\begin{remark}%\label{r:messrlf}
\quad
%%%%%
\begin{enumerate}
[label=(\roman*),ref=(\roman*), 
leftmargin=0pt, itemsep=2pt, itemindent=30pt]

\item 
The flow of the (non-Lipschitz) velocity field $u_0$ changes the topology
of sets: the ball at time $t=0$ is transformed into two balls at time~${t=1/2}$.

\item
\label{r:messrlf.2} 
Using symmetry considerations, it is possible to check that the flow 
of $u_0$ compresses a vertical segment to a point, namely the origin 
(the center of the circle in Figure~\ref{f:evol3}), from time $t=1/8$
to time $t=1/4$.
Similarly,  the flow of $u_0$ expands a point, the origin, to a horizontal 
segment from time $t=1/4$ to time $t=3/8$. 

\item 
In particular, non-uniqueness holds for the characteristic curves 
of the velocity field $u_0$ starting at any point laying on the 
vertical segment referenced in point~\ref{r:messrlf.2} above at time $t=1/8$.
\end{enumerate}
\end{remark}

%
%	SECTION 6
%

\section{Scaling analysis in a quasi-self-similar construction}
\label{quasiselfsimilar}

As already noted, any velocity field with properties similar to those 
of the field constructed in Section~\ref{pinching} cannot have Lipschitz 
regularity, since sets evolved in time by the associated flow
do not preserve their connectivity. 
Indeed, it is not evident how to build an example that
satisfies Assumption~\ref{ass} in the case when $s=1$ and $p=\infty$.
In this section, we address this case by replacing the
(exactly) self-similar scheme of Section~\ref{selfsimilar} with a
{\em quasi-self-similar} scheme.
That is, instead of replicating rescaled copies of
{\em one} basic element at each step of the evolution
(as in~\S\ref{ss:ssc}), we consider a {\em finite family}
of basic elements, which are rescaled and rearranged at each step
of the evolution according to a certain combinatorial pattern.

For the quasi-self-similar scheme, we work on the full plane~$\R^2$. 
We implement this construction and produce a concrete example 
in~Section~\ref{snake}.

\medskip

Given $s > 0$ and $1 \leq p \leq \infty$, Assumption~\ref{ass} is
replaced by the assumption below, where we denote
by $\lceil s \rceil$ the smallest integer greater or equal than $s$.
We recall that $\dot{W}^{s,\infty}$ is the Lipschitz-H\"older 
space defined in~\S\ref{ss:lhs}.

\begin{parag}[Assumption:~basic family]
\label{ass2}
There exists an integer $N$ such that, for $j = 1,\ldots,N$,
there are velocity fields~$u_j$ and corresponding (not identically zero)
solutions $\rho_j$ to~\eqref{e:continuity}, all defined for
$0 \leq t \leq 1$ and $x \in \R^2$, satisfying:
%%%%%
\begin{enumerate}
[label=(\roman*),ref=(\roman*),
itemsep=2pt, leftmargin=30pt]

\item\label{ass2.1} 
each velocity field $u_j$ is bounded, divergence-free, tangent
to the boundary of the square~$\Qone$,%
\footnoteb{We observe that the normal trace of $u_j$ on 
$\bd\Qone$ (from the interior as well as from the exterior
of the set) is well defined in distributional sense,
because $u_j$ is divergence-free.}
and bounded in $\dot{W}^{\lceil s \rceil,p}(\R^2)$ uniformly in time;

\item\label{ass2.2} 
each solution $\rho_j$ is a bounded function and has zero average 
on $\Qone$ for all times;

\item \label{ass2.3} 
there exists a positive constant $\lambda$, with $1/\lambda$ an
integer greater or equal than~$2$, such that each function $\rho_j(1,\cdot)$
agrees on each square of the tiling~$\Til_\lambda$ (introduced in 
Definition~\ref{d:tiling}) with one of the functions $\rho_i(0,\cdot)$ with 
$1\leq i\leq N$ after rescaling and a possible translation,
that is,  for each $Q \in \Til_\lambda$ and for all $x\in Q$,
\[
\rho_j (1,x) =  \rho_{i(j,Q)} \left( 0 , \frac{x-r_Q}{\lambda} \right)
\, ,
\]
for a suitable $i = i(j,Q) \in \{1,\ldots,N\}$ and for some point $r_Q=r_Q(j)$. 
\end{enumerate}
We remark that we \emph{do not} assume that the supports of $u_j$ and 
$\rho_j$ are contained in the closure of~$\Qone$.
\end{parag}

\begin{parag}[Quasi-self-similar construction]\label{ss:ass}
Under Assumption~\ref{ass2}, we now define inductively a quasi-self-similar 
scheme that will be used to give our second, Lipschitz-continuous, 
example of optimal mixer. 

\smallskip
\emph{Initial step.} We start by choosing
a positive constant $\bar \lambda$, with $1 / \bar\lambda$
an integer greater or equal than~$1$.%
\footnoteb{We allow $\bar \lambda=1$ here, but in the example 
in Section~\ref{snake} we will take $\bar \lambda=2$.}
We define the evolution for
$0 \leq t \leq 1$ by patching together velocity fields and
solutions on the tiling $\Til_{\bar \lambda}$ of $\Qone$.

For every $Q \in \Til_{\bar \lambda}$, we select an index 
$\bar \jmath (Q) \in \{1,\ldots,N\}$
and we set for $x \in Q$ and $0 \leq t \leq 1$:
\begin{equation}\label{e:gluefieldbase}
u(t,x) 
:= \bar \lambda \, u_{\bar \jmath (Q)} \left( t , \frac{x-r_Q}{\bar \lambda} \right)  
\,, \quad
\rho(t,x) 
:= \rho_{\bar \jmath (Q)} \left( t , \frac{x-r_Q}{\bar \lambda} \right) 
\,.
\end{equation}
For $x \not \in \Qone$, we set both $u$ and $\rho$ equal to zero.

We stress that, in this step (as well as  in the iterative step below), 
the resulting field is divergence-free, but it does not necessarily have 
Sobolev regularity, since the derivative may jump at the boundary 
of the patch. 
In what follows, we will temporarily {\em assume} the needed regularity 
(see Assumption~\ref{ass3} below), and show afterwards that it is, in 
fact, fulfilled for the specific example in~Section~\ref{snake}. 

Since by construction the velocity field $u$ in \eqref{e:gluefieldbase}
is tangent to the boundary of all the tiles in~$\Til_{\bar \lambda}$, 
it follows that, for $0<t\leq 1$, the function~$\rho$ in \eqref{e:gluefieldbase}
is a weak solution of  the continuity equation with velocity field $u$ globally
in $\R^2$.
We also note that, by Assumption~\ref{ass2}\ref{ass2.3}, the solution at 
time $1$, $\rho(1,\cdot)$, agrees on each element of the tiling 
$\Til_{\bar \lambda}$ with one of the functions $\rho_i(0,\cdot)$ after
rescaling and possible translation.

\smallskip
\emph{Iterative step.}
For a given positive parameter $\tau$ (to be chosen later), we define
\[
T_n := \sum_{i=0}^{n-1} \tau^i  
\quad\text{for $n=1, 2, \ldots, \infty$.}
\]
We next inductively assume that $u$ and $\rho$ have been defined for 
$0 \leq t \leq T_n$, in such a way that on each square of the tiling 
$\Til_{\bar \lambda \lambda^n}$ the function $\rho(T_n,\cdot)$ agrees 
with a rescaled translation of one of the functions $\rho_i(0,\cdot)$. 
We then show how to define~$u$ and~$\rho$ for $T_n < t \leq T_{n+1}$.

We consider a square $Q \in \Til_{\bar \lambda \lambda^n}$. 
By the inductive assumption, there exists an index $j = j(n,Q)$ such that
\begin{equation}
\label{e:choose}
\rho(T_n,x) 
= \rho_j \left(0 , \frac{x-r_Q}{\bar \lambda \lambda^n} \right) 
\quad\text{for $x \in Q$.}
\end{equation}
Accordingly, for $x \in Q$ and $T_n< t \leq T_{n+1}$ we define 
\begin{equation}
\label{e:gluefield}
u(t,x) 
:= \frac{\bar \lambda \lambda^n}{\tau^n} u_j 
   \left( \frac{t-T_n}{\tau^n} , \frac{x-r_Q}{\bar \lambda \lambda^n} \right)  
\,, \quad
\rho(t,x) 
:= \rho_j \left( \frac{t-T_n}{\tau^n} , \frac{x-r_Q}{\bar \lambda \lambda^n} \right)
\,.
\end{equation}
As before, for $x \not \in \Qone$ we set both $u$ and $\rho$ equal to zero.
By the same argument as in the initial step, we have that,
for $T_n< t \leq T_{n+1}$, the function
$\rho$ in \eqref{e:gluefield} is a weak solution of the continuity equation
with velocity field $u$ globally in $\R^2$.

Again by Assumption~\ref{ass2}\ref{ass2.3}, on each square of the tiling
$\Til_{\bar \lambda \lambda^{n+1}}$ the function~$\rho(T_{n+1},\cdot)$
agrees with a rescaled translation of one of the functions
$\rho_i(0,\cdot)$. This concludes the inductive
procedure, which gives a velocity field $u$ and a weak solution
$\rho$ of \eqref{e:continuity} defined for a.e.~$x \in \R^2$
and for all $0 \leq t < T_\infty$.
\end{parag}

We now make a further assumption on the velocity field $u$ obtained by 
the quasi-self-similar scheme that we have described. 
One drawback of our construction is, in fact, 
that we do not {\em a priori} control  the behavior of derivatives of the field at 
the boundary of each patch. We are therefore forced at this stage to make a further 
assumption on $u$, concerning its regularity. 

\begin{parag}[Assumption: regularity of the patching]\label{ass3}
The velocity field $u(t,\cdot)$ belongs to 
$\dot{W}^{\lceil s \rceil ,p}(\R^2)$ for all $0 \leq t < T_\infty$.
\end{parag}

A few remarks on this delicate point are in order.

\begin{remark}
\quad
%%%%%
\begin{enumerate}
[label=(\roman*), 
leftmargin=0pt, itemsep=2pt, itemindent=30pt]

\item 
The fact that Assumptions~\ref{ass2} and \ref{ass3} 
entail regularity of order $\lceil s \rceil$ (rather than $s$)
is technical and due to the fact that the norm in a Sobolev space
with integer order is local, a property that we will exploit in 
the proof of Lemma~\ref{l:fieldqss}.

\item 
Assumption~\ref{ass3} is, in fact, the key structural condition 
to ensure that  a quasi-self-similar construction yields velocity 
fields with the required regularity, as already observed above. 
It is indeed easy  to construct families of velocity fields and 
solutions that satisfy Assumption~\ref{ass2}, but not 
Assumption~\ref{ass3}. 

\item 
In the relevant case $s=1$ and $p=\infty$, i.e., in the Lipschitz case,
Assumption~\ref{ass3} is equivalent to assume that $u$ has a continuous
representative on~$\R^2$. In fact, it is sufficient to assume the continuity
of $u$ across the boundary of adjacent squares in each tiling.
\end{enumerate}
\end{remark}

We stress that in Assumption~\ref{ass3},  we do not require
the Sobolev norm of $u$ to be bounded uniformly  with respect to time. 
The uniformity of the Sobolev bounds in time is then guaranteed by the
following lemma.

\begin{lemma}\label{l:fieldqss} Let $\tau = \lambda^{1-s}$. 
Under Assumptions~\ref{ass2} and \ref{ass3}, the velocity field~$u$ 
constructed by the quasi-self-similar procedure in~\S\ref{ss:ass} 
is divergence-free and is bounded in $\dot{W}^{s,p}(\R^2)$ uniformly 
in time.
\end{lemma}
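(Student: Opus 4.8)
The plan is to estimate the $\dot W^{s,p}(\R^2)$-norm of $u(t,\cdot)$ on each time interval $[T_n,T_{n+1})$ separately, exploiting the fact that on such an interval $u(t,\cdot)$ is, on each square $Q\in\Til_{\bar\lambda\lambda^n}$, a rescaled translated copy of one of the finitely many fields $u^j\big((t-T_n)/\tau^n,\cdot\big)$, multiplied by the amplitude factor $\bar\lambda\lambda^n/\tau^n$. The key point is that Assumption~\ref{ass3} gives $u(t,\cdot)\in\dot W^{\lceil s\rceil,p}(\R^2)$ as a \emph{global} field for a.e.\ $t$; since the derivative of integer order $\lceil s\rceil$ is a \emph{local} operator, the global $\dot W^{\lceil s\rceil,p}$-seminorm decomposes as a sum (or sup, if $p=\infty$) over the squares $Q$ of the corresponding seminorms of the pieces. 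The divergence-free property is immediate: on each patch $u$ is a rescaling of a divergence-free field, and the patching was arranged (tangency to the interfaces, via Assumption~\ref{ass2}(i) and the construction in~\S\ref{ss:ass}) so that no singular part appears on the interfaces.

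First I would carry out the scaling computation. Fix $t\in[T_n,T_{n+1})$ and write $s'=(t-T_n)/\tau^n\in[0,1)$. On a square $Q=\bar\lambda\lambda^n\,\Qone+r_Q$ one has
\[
u(t,x)=\frac{\bar\lambda\lambda^n}{\tau^n}\,u^{j(n,Q)}\Big(s',\frac{x-r_Q}{\bar\lambda\lambda^n}\Big),
\]
so by the scaling rule \eqref{e:scaleR} (with the exponent $\frac{2}{p}-\lceil s\rceil$), and summing the $p$-th powers over the $1/(\bar\lambda\lambda^n)^2$ squares of $\Til_{\bar\lambda\lambda^n}$,
\[
\|u(t,\cdot)\|_{\dot W^{\lceil s\rceil,p}(\R^2)}
\le \Big(\frac{\bar\lambda\lambda^n}{\tau^n}\Big)\,(\bar\lambda\lambda^n)^{\frac{2}{p}-\lceil s\rceil}\,(\bar\lambda\lambda^n)^{-\frac{2}{p}}\,
\max_j\sup_{0\le t\le1}\|u^j(t,\cdot)\|_{\dot W^{\lceil s\rceil,p}(\R^2)}.
\]
(The two $\frac2p$ powers cancel because on $\R^2$ the single bump is not replicated, so the count of squares exactly compensates the $L^p$-normalization; equivalently one uses $\Qone$-supported pieces and the plane scaling on each of them, then sums.) With $\tau=\lambda^{1-s}$ this gives a bound proportional to $\lambda^{n(1-\lceil s\rceil)}\,\tau^{-n}=\lambda^{n(1-\lceil s\rceil)}\lambda^{-n(1-s)}=\lambda^{n(s-\lceil s\rceil)}$. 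Since $s\le\lceil s\rceil$, the exponent $s-\lceil s\rceil\le0$, so this is uniformly bounded in $n$ — in fact it decays unless $s$ is an integer, in which case $\lceil s\rceil=s$ and it is constant.

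Next I would upgrade from $\dot W^{\lceil s\rceil,p}$ to $\dot W^{s,p}$ by interpolation. The field $u(t,\cdot)$ is bounded (it is a finite amplitude times a bounded profile, with amplitude $\bar\lambda\lambda^n/\tau^n=\bar\lambda\lambda^{ns}$, which is bounded in $n$ since $\lambda<1$ and $s\ge0$) — hence bounded in $L^p$ on the fixed support $\Qone$, i.e.\ in $\dot W^{0,p}(\R^2)$ up to the compact-support caveat — and bounded in $\dot W^{\lceil s\rceil,p}$ by the previous step. The interpolation inequality \eqref{e:interpW} (valid also for $p=\infty$ in the Lipschitz–Hölder scale by \S\ref{ss:lhs}) with $s=\vartheta\cdot0+(1-\vartheta)\lceil s\rceil$ then yields a uniform-in-time bound on $\|u(t,\cdot)\|_{\dot W^{s,p}(\R^2)}$.

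The main obstacle is the locality/decomposition step: one must justify that the global seminorm $\|u(t,\cdot)\|_{\dot W^{\lceil s\rceil,p}(\R^2)}$ really splits as the $\ell^p$-combination of the seminorms of the patched pieces. For integer-order Sobolev spaces this is exactly the statement that the distributional derivative $D^{\lceil s\rceil}u$ has no concentration on the union of interface curves — and this is precisely what Assumption~\ref{ass3} is designed to guarantee (the pieces glue to a genuine $\dot W^{\lceil s\rceil,p}$ function, not merely a piecewise one with jumps in the derivative). Once that is granted, $|D^{\lceil s\rceil}u(t,\cdot)|^p$ restricted to a square $Q$ equals $|D^{\lceil s\rceil}$ of the single piece on $Q|^p$ a.e., and integrating over $\R^2$ gives the sum over $Q$; for $p=\infty$ the sum becomes a maximum. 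This is the place where the argument genuinely uses $\lceil s\rceil$ rather than $s$, as flagged in the Remark after Assumption~\ref{ass3}. The remaining items (divergence-free, finiteness of the profile seminorms) are immediate from Assumption~\ref{ass2}.
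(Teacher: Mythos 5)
Your overall strategy is the same as the paper's (locality of the integer-order derivative to decompose the seminorm over the squares of the tiling, the scaling rule \eqref{e:scaleR} on each square, then interpolation down to order $s$), and your Step~1 computation is correct: with $\tau=\lambda^{1-s}$ one gets
\[
\| u(t,\cdot)\|_{\dot W^{\lceil s\rceil,p}(\R^2)}\ \lesssim\ \bar\lambda^{\,1-\lceil s\rceil}\,\lambda^{\,n(s-\lceil s\rceil)}.
\]
But your assessment of this bound is wrong. Since $1/\lambda$ is an integer $\ge 2$, we have $\lambda<1$, so a \emph{nonpositive} exponent makes $\lambda^{\,n(s-\lceil s\rceil)}=\lambda^{-n(\lceil s\rceil-s)}$ \emph{grow} exponentially in $n$ whenever $s$ is not an integer, rather than stay bounded or decay as you claim. (This is consistent with Remark~\ref{r:qssfinal}(iii): norms of order higher than $s$ blow up exponentially in time.) Because of this, your final interpolation step does not close: you interpolate between ``bounded in $L^p$'' and ``bounded in $\dot W^{\lceil s\rceil,p}$'', but the second premise is false, and ``bounded'' times ``growing'' to positive powers is still growing.

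The fix is exactly what the paper's Step~2 does, and it requires the \emph{rates}, not just boundedness. Step~1 applied with $k=0$ gives the decaying bound $\|u(t,\cdot)\|_{L^p}\lesssim(\lambda/\tau)^n=\lambda^{ns}$, while $k=\lceil s\rceil$ gives the growing bound above. Interpolating with $\vartheta=s/\lceil s\rceil$, i.e.\ $s=(1-\vartheta)\cdot 0+\vartheta\,\lceil s\rceil$, the exponent of $\lambda^n$ becomes
\[
(1-\vartheta)\,s+\vartheta\,(s-\lceil s\rceil)=s-\vartheta\,\lceil s\rceil=0,
\]
so the decay of the low-order norm exactly cancels the growth of the high-order norm and $\|u(t,\cdot)\|_{\dot W^{s,p}}$ is uniformly bounded. (Note also that your convex combination is written with the weights swapped relative to \eqref{e:interpW}; with $s_1=0$, $s_2=\lceil s\rceil$ the weight on the $\lceil s\rceil$-endpoint must be $s/\lceil s\rceil$.) The locality/patching discussion and the divergence-free part of your argument are fine and match the paper.
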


\begin{proof}
First of all, since each velocity field $u_j$ is divergence-free
in $\Qone$ and tangent to the boundary $\bd\Qone$, it follows
that $u$ is globally divergence-free. It remains to prove 
the bound on the $\dot{W}^{s,p}(\R^2)$ norm. 

\emph{Step~1: the case $s=k$ an integer.}
Let $T_n < t \leq T_{n+1}$ for some $n\in \NN$. 
Then $u(t,\cdot)$ is defined as in \eqref{e:gluefield} 
for some function $j = j(n,Q)$. 
Assumption~\ref{ass3} guarantees that the $\dot{W}^{k,p}(\R^2)$ 
norm of $u(t,\cdot)$ is finite, therefore we only need to estimate 
the sum of the $\dot{W}^{k,p}$ norms of the restriction of $u(t,\cdot)$
to the squares $Q$ in $\Til_{\bar \lambda \lambda^n}$: 
\begin{align*}
\| u(t,\cdot) \|_{\dot{W}^{k,p}(\R^2)}^p
& =
\int_{\Qone} |\nabla^k u(t,x)|^p \, dx \\
& =
\sum_{Q \in \Til_{\bar \lambda \lambda^n}} 
\int_{Q} \left| \nabla^k
\left( \frac{\bar \lambda \lambda^n}{\tau^n} \, u_j 
     \left( \frac{t-T_n}{\tau^n} , \frac{x-r_Q}{\bar \lambda \lambda^n} \right) 
\right) \right|^p \, dx \displaybreak[3] \\
& =
\sum_{Q \in \Til_{\bar \lambda \lambda^n}} 
\int_{Q} \left| \frac{\bar \lambda \lambda^n}{\tau^n \bar\lambda^k \lambda^{kn}} 
\left( \nabla^k u_j \right) \left( \frac{t-T_n}{\tau^n} , \frac{x-r_Q}{\bar \lambda \lambda^n} \right)  
\right|^p \, dx \displaybreak[3] \\
&=
\left( \frac{\lambda^{1-k}}{\tau} \right)^{pn} \bar \lambda^{p(1-k)} 
\sum_{Q \in \Til_{\bar \lambda \lambda^n}} 
\int_\Qone \left| \left( \nabla^k u_j \right) \left( \frac{t-T_n}{\tau^n} , y \right) \right|^p  
(\bar \lambda\lambda^{n})^2 \, dy \\
& \leq
\left( \frac{\lambda^{1-k}}{\tau} \right)^{pn} 
\bar \lambda^{p(1-k)} \max_{1 \leq j \leq N} \sup_{0 \leq r \leq 1} 
\| u_j(r,\cdot) \|_{\dot{W}^{k,p}(\R^2)}^p \,.
\end{align*}
The computation for $p=\infty$ is similar and gives
\[
\| u(t,\cdot) \|_{\dot{W}^{k,\infty}(\R^2)}
\leq
\left( \frac{\lambda^{1-k}}{\tau} \right)^n \bar \lambda^{(1-k)} 
\max_{1 \leq j \leq N} \sup_{0 \leq r \leq 1} \| u_j(r,\cdot) \|_{\dot{W}^{k,\infty}(\R^2)} 
\,.
\]
The fact that $\tau=\lambda^{1-s}$ gives the desired bound and 
concludes the proof for $s=k$ integer.

\emph{Step~2: the general case $s \geq 0$ and real.} 
We rely on the previous step and we use~\eqref{e:interpW} with
$s_1 = 0$, $s_2 = \lceil s \rceil$ and 
$\vartheta = 1- s / \lceil s \rceil$, obtaining
\begin{align*}
\| u(t,\cdot) \|_{\dot{W}^{s,p}(\R^2)}
& \leq
\| u(t,\cdot) \|^{\vartheta}_{L^p(\R^2)} \,
\| u(t,\cdot) \|^{1-\vartheta}_{\dot{W}^{\lceil s \rceil,p}(\R^2)} \\
& \leq \left( \frac{\lambda}{\tau} \right)^{\vartheta n} \bar \lambda^{\vartheta}
\left( \frac{\lambda^{1-\lceil s \rceil}}{\tau} \right)^{(1-\vartheta) n} \bar \lambda^{(1-\lceil s \rceil)(1-\vartheta)} \, M_{s,p} \\
& = \left( \frac{\lambda^{1-s}}{\tau} \right)^n \bar \lambda^{1-s} \, M_{s,p} \,,
\end{align*}
where
\[
M_{s,p} 
:= \max_{1 \leq j \leq N} \sup_{0 \leq t \leq 1} \left[ \| u_j(t,\cdot) \|_{L^p(\R^2)}
   + \| u_j(t,\cdot) \|_{\dot{W}^{\lceil s \rceil ,p}(\R^2)} \right] 
\,,
\]
Above we have used the estimate in Step 1 for  $k=0$ and $k = \lceil s \rceil$.

Again, the choice $\tau = \lambda^{1-s}$ allows to conclude.
\end{proof}

\begin{parag}[Decay of the functional mixing scale]
We now analyze the behavior in time of negative Sobolev norms 
of the solution $\rho$ constructed in~\S\ref{ss:ass}. 
For~${T_n \leq t < T_{n+1}}$ we have
\[
\rho(t,x) 
= \sum_{Q \in \Til_{\bar \lambda \lambda^n}}
\rho_j \left( \frac{t-T_n}{\tau^n} , \frac{x-r_Q}{\bar \lambda\lambda^n} \right) \, 1_Q(x)
\quad \text{for $x \in \R^2$,}
\]
for a suitable $j = j(n,Q)$. For any $r>0$, Equation~\eqref{e:scaleR} implies
that
\begin{align}
\| \rho(t,\cdot)  \|_{\dot{H}^{-r}(\mathbb{R}^2)}
& \leq
\sum_{Q \in \Til_{\bar \lambda \lambda^n}} \left\| \rho_j \left( \frac{t-T_n}{\tau^n} , \frac{x-r_Q}{\bar \lambda \lambda^n} \right) \, 1_Q(x) \right\|_{\dot{H}^{-r}(\mathbb{R}^2)} \notag \\
& =
\sum_{Q \in \Til_{\bar \lambda \lambda^n}} \bar \lambda^{1+r} \lambda^{n(1+r)} \left\| \rho_j \left( \frac{t-T_n}{\tau^n} , \cdot \right) \, 1_\Qone \right\|_{\dot{H}^{-r}(\mathbb{R}^2)} \notag \\
& \leq  \frac{\bar \lambda^{1+r} \lambda^{n(1+r)}}{\bar \lambda^2 \lambda^{2n}} M_r = \bar \lambda^{r-1} \big(\lambda^{r-1}\big)^n M_r \,, \label{e:r}
\end{align}
with $M_r$ defined (for all $r \geq 0$) as
\begin{equation}\label{e:Msigma}
M_r := \max_{j=1,\ldots,N} \sup_{0 \leq t \leq 1}  
       \| \rho_j(t,\cdot) \, 1_\Qone \|_{\dot{H}^{-r}(\mathbb{R}^2)} 
\,.
\end{equation}
Since each $\rho_j$ is bounded and has zero average on $\Qone$, 
Remark~\ref{r:fctsinH}\ref{r:fctsinH:3} implies that~$M_r$ is 
finite for $0 \leq r < 2$.

Estimate~\eqref{e:r} gives the correct decay of the 
homogeneous norms $\dot{H}^{-r}(\mathbb{R}^2)$ for all 
$1 < r <2$, since in this case $\lambda^{r-1} < 1$ 
and $M_r < \infty$. In order to prove the 
decay of the homogeneous norm $\dot{H}^{-1}(\mathbb{R}^2)$
we need an interpolation argument. 
Using~\eqref{e:interpH} we find that,
for $1<r<2$,
\[
\| \rho (t,\cdot)  \|_{\dot{H}^{-1}(\mathbb{R}^2)}
\leq \| \rho (t,\cdot)  \|^{1/r}_{\dot{H}^{-r}(\mathbb{R}^2)}
     \| \rho (t,\cdot)  \|^{1-1/r}_{L^2(\mathbb{R}^2)}
\,.
\]
Together with~\eqref{e:r}, this estimates gives,
for $T_n \leq t < T_{n+1}$,
\begin{equation}\label{e:cg}
\| \rho(t,\cdot)  \|_{\dot{H}^{-1}(\mathbb{R}^2)}
\leq \bar \lambda^{1-1/r}  M_0^{1-1/r} M_{r}^{1/r}  \big(\lambda^{1 - 1/r}\big)^n \,.
\end{equation}
Setting
\[
c_r := 1-1/r > 0 
\,, \quad
C_r := \bar \lambda^{1-1/r} M_0^{1-1/r} M_{r}^{1/r} > 0 
\,,
\]
we obtain from~\eqref{e:cg} that, for $T_n \leq t < T_{n+1}$,
\begin{equation}
\label{e:decaysigma}
\| \rho(t,\cdot) \|_{\dot{H}^{-1}(\mathbb{R}^2)}
\leq C_r \big(\lambda^{c_r}\big)^n 
\,.
\end{equation}
In particular, choosing $r=3/2$ yields
\begin{equation}
\label{e:decay2}
\| \rho(t,\cdot) \|_{\dot{H}^{-1}(\mathbb{R}^2)}
\leq \bar \lambda^{1/3}
M^{1/3}_0 M^{2/3}_{3/2} \; \big(\lambda^{1/3}\big)^n 
\quad
\text{for $T_n \leq t < T_{n+1}$.}
\end{equation}
\end{parag}

Estimates \eqref{e:decaysigma} and \eqref{e:decay2} above
correspond to \eqref{e:decay} in~\S\ref{ss:ssc}.
Therefore, arguing as in the final step of the proof
of Theorem~\ref{t:main}, we obtain the following result.

\begin{theorem}\label{t:ass}
Given $s>0$ and $1\leq p\leq \infty$, under Assumptions~\ref{ass2} 
and~\ref{ass3}, there exist a bounded, divergence-free velocity field $u$ and a
solution $\rho$ of the Cauchy problem for~\eqref{e:continuity} in $\R^2$, such
that $u$ is bounded in $\dot{W}^{s,p}(\R^2)$ uniformly in time,~$u$ and $\rho$
are supported in $\Qone$ for all times, and the functional mixing scale of
$\rho$ exhibits the following behavior:
%%%%%
\begin{itemize}
[leftmargin=30pt, itemsep=0pt]
\item
 case $s<1$: perfect mixing in finite time;
\item
case $s=1$: exponential decay;
\item
case $s>1$: polynomial decay.
\end{itemize}
\end{theorem}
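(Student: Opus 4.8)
The plan is to repeat, almost verbatim, the scaling argument used in the proof of Theorem~\ref{t:main}, now feeding in the quasi self-similar construction of \S\ref{ss:ass} together with the two estimates already established in this section. First I would fix $\tau := \lambda^{1-s}$, exactly as in the exactly self-similar case, and let $u$ and $\rho$ be the velocity field and the solution produced by the inductive procedure of \S\ref{ss:ass}. By construction $u$ and $\rho$ are supported in $\Qone$ for all times, $\rho$ solves the Cauchy problem for~\eqref{e:continuity} with datum $\bar\rho=\rho(0,\cdot)$ (which is a rescaled translation of some $\rho^j(0,\cdot)$, hence not identically zero by Assumption~\ref{ass2}), and $u$ is bounded, since only finitely many profiles $u^j$ enter the construction and, once $\tau=\lambda^{1-s}$, the amplitude factors $\bar\lambda\lambda^n/\tau^n$ stay bounded. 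Lemma~\ref{l:fieldqss} --- whose hypotheses are precisely Assumptions~\ref{ass2} and~\ref{ass3} and whose conclusion uses the very same choice of $\tau$ --- then gives that $u$ is divergence-free and bounded in $\dot{W}^{s,p}(\R^2)$ uniformly in time. This settles all the assertions concerning $u$.

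For the functional mixing scale I would start from estimate~\eqref{e:decay2}, i.e.\ $\|\rho(t,\cdot)\|_{\dot{H}^{-1}(\R^2)}\le C\,\lambda^{n/3}$ for $T_n\le t<T_{n+1}$, with $0<\lambda<1$ and $C=\bar\lambda^{5/3}M_0^{1/3}M_{3/2}^{2/3}$; this plays here the role that~\eqref{e:decay} played in \S\ref{ss:ssc}. The three regimes are then extracted exactly as in the proof of Theorem~\ref{t:main}:
\begin{itemizeb}
\item[(1)] if $s<1$ then $\tau<1$, so $T_\infty=\sum_{i\ge0}\tau^i<\infty$; as $t\uparrow T_\infty$ one has $n\to\infty$ and $\lambda^{n/3}\to0$, hence $\|\rho(t,\cdot)\|_{\dot{H}^{-1}}\to0$, i.e.\ perfect mixing at the finite time $T_\infty$;
\item[(2)] if $s=1$ then $\tau=1$, $T_n=n$, so $t<T_{n+1}$ forces $n>t-1$, and since $\lambda<1$ this yields $\|\rho(t,\cdot)\|_{\dot{H}^{-1}}\le C\,\lambda^{(t-1)/3}$, exponential decay;
\item[(3)] if $s>1$ then $\tau>1$ and $T_n=(\tau^n-1)/(\tau-1)$; inverting this relation gives $\lambda^{n/3}\simeq C(M,\lambda,s)\,t^{-\alpha}$ with $\alpha=\alpha(s)=\tfrac{1}{3(s-1)}>0$, hence polynomial decay.
\end{itemizeb}
Although Theorem~\ref{t:ass} is stated only for the functional scale, the geometric one follows as well: by Assumption~\ref{ass2}(ii)--(iii), $\rho(T_n,\cdot)$ has zero average on every square of $\Til_{\bar\lambda\lambda^n}$, so Lemma~\ref{l:fg} bounds its geometric mixing scale by a constant multiple of $\bar\lambda\lambda^n$, which exhibits the same three-regime behavior (extending the bound from $t=T_n$ to all $t\in[T_n,T_{n+1})$ is routine).

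Since every analytic ingredient is already available, I do not expect a genuine obstacle in this theorem itself; the only point needing a little care is the bookkeeping between the discrete times $T_n$ and the continuous time $t$ in regime~(3). The real work --- exhibiting a concrete basic family $\{(u^j,\rho^j)\}_{j=1}^N$ that satisfies Assumptions~\ref{ass2} and, above all, the delicate regularity-of-patching Assumption~\ref{ass3} --- is carried out in Section~\ref{snake} and is not needed here.
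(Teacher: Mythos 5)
Your proposal is correct and follows essentially the same route as the paper: the paper's own proof consists precisely of Lemma~\ref{l:fieldqss} (with $\tau=\lambda^{1-s}$) for the uniform $\dot{W}^{s,p}$ bound, the decay estimates \eqref{e:decaysigma}--\eqref{e:decay2} obtained via rescaling and interpolation, and then the three-regime case analysis ``arguing as in the final step of the proof of Theorem~\ref{t:main}''. Your added remark on the geometric scale via Lemma~\ref{l:fg} matches Remark~\ref{r:qssfinal}(ii), so nothing essential is missing.
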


\pagebreak[0]
\begin{remark}%\label{r:qssfinal}
\quad
%%%%%
\begin{enumerate}
[label=(\roman*), ref=(\roman*), 
leftmargin=0pt, itemsep=2pt, itemindent=30pt]

\item 
Thanks to Lemma~\ref{l:fg} we can deduce that the geometric mixing scale
 of $\rho(t,\cdot)$ exhibits the same behavior as the
functional mixing scale.

\item 
In view of Remark~\ref{r:tay}\ref{r:tay:4}, the fact that the
velocity field and the solution are supported in $\Qone$
implies the validity of Theorem~\ref{t:ass} on 
the torus $\T^2$.

\item 
For later use (in the companion paper~\cite{loss}), we make here
and in \ref{r:qssfinal.4} below some additional observations.
In the case $s=1$, every $\dot{H}^{-r}$ norm of $\rho$ decays
exponentially in time for $0 < r < 2$. 
Moreover,if $M_{\tilde r}$ (defined in~\eqref{e:Msigma}) is 
finite for some $\tilde r \geq 2$, then the $\dot{H}^{-r}$
norm of $\rho$ decays exponentially in time for~$0 < r < \tilde r$.

\item 
\label{r:qssfinal.4}
In Section~\ref{snake} we will verify Assumptions~\ref{ass2} 
and~\ref{ass3} for any~$s$ and~$p$ and construct a velocity field $u$ 
and a solution $\rho$ that are actually smooth in both time and space.
As a consequence of Theorem~\ref{t:ass}, this velocity field is bounded 
in~$\dot{W}^{1,p}(\R^2)$ uniformly in time and the functional mixing 
scale of $\rho$ decays exponentially.
Additionally,  this velocity field satisfies
\[
\| u(t,\cdot) \|_{\dot{W}^{r,p}(\R^2)}
\leq
C_r  \left( \lambda^{1-r} \right)^t 
\,,
\]
for any real number $r \geq 0$.
The estimate above follows from the proof of Lemma~\ref{l:fieldqss}
(recalling that in this case~$\tau=1$).
In particular, the Sobolev norms of $u$ of order higher than one grow
exponentially in time, while the Sobolev norms of order lower than one 
decay exponentially in time.

\item 
Finally, a reparametrization of the time variable in the example
constructed in Section~\ref{snake} gives a bounded, compactly supported,
divergence-free velocity field~$u$ such that~$u(t,\cdot) \in {\rm Lip}(\R^2)$
for almost every $0 \leq t \leq 1$, and such that the Cauchy problem for the
continuity equation associated to this velocity field admits non-unique 
solutions.
Indeed, its Lipschitz norm  blows up as $t \downarrow 0$ in such a way 
that the velocity field fails to belong to $L^1([0,1]; {\rm Lip} (\R^2))$. 
%just below the uniqueness class for the DiPerna-Lions theory
%in its $BV$ extension \cite{Amb}. 
This example improves on the result in~\cite{depauw} in the $BV$ case
(see also \cite{bressan}, \cite{llnmd}).
\end{enumerate}
\end{remark}

%
%	SECTION 7
%
\section{Second geometric construction}
\label{geo2}

In this section we describe another geometric construction of
divergence-free velocity fields $u$ together with
(non-trivial) solutions $\rho$ of the continuity
equation~\eqref{e:continuity}. The main improvement obtained by this 
approach is that we construct solutions that are smooth.
For paths and curves we follow the notation introduced
in Section~\ref{basic}. 

\medskip

We begin with a simple remark. Let $I$ be an open time interval
and $D$ an open subset of $\R^2$, and let
$\{\Phi(t,\cdot): \, t\in I\}$ be an area-preserving
flow on $D$ of class~$C^k$ with~$k\ge 2$. In other words,
$\Phi:I\times D\to\R^2$ is a map of class $C^k$
such that, for every $t\in I$,
$\Phi(t,\cdot)$ is diffeomorphism from $D$
onto an open set $\Omega(t)$,
which satisfies 
\[
J\Phi(t,z) := \det(\nabla\Phi(t,z)) =1.
\]

We denote by $\Omega$ the (open) set of all points
$(t,x)$ with $t\in I$ and $x\in\Omega(t)$.
 is then well known that the velocity field
$w:\Omega\to\R^2$ defined by
\begin{equation}
\label{e:velocity}
w(t,x) := \bd_t \Phi(t,z)
\quad\text{with }
x=\Phi(t,z)
\end{equation}
is of class $C^{k-1}$ and divergence-free.

Moreover, given a bounded function $\bar\rho$ on $D$,
the function $\rho: \Omega\to\R$ obtained by
transporting $\bar\rho$ with the flow $\Phi$, that is,
\[
\rho(t,x) := \bar\rho(z)
\quad\text{with }
x=\Phi(t,z)\,,
\]
is a weak solution of the transport equation \eqref{e:transport} 
(which agrees with the continuity equation \eqref{e:continuity}, 
since  the velocity is divergence-free).

\medskip

In the next proposition, we extend this result
in order to obtain a velocity field and a solution
defined on $I\times\R^2$, rather than on $\Omega$.

\begin{proposition}
\label{e:tronc}
Let $D$ be a simply-connected domain in $\R^2$, and let $\Phi$ 
be an area-preserving flow on $D$ of class~$C^k$,~$k\ge 2$. 
Let $D'$ be a closed subset of $D$.
Then there exists a divergence-free velocity field
$u:I\times\R^2\to\R^2$ of class $C^{k-1}$ such that
\begin{equation}
\label{e:compat8}
u(t,x)
= w(t,x)
= \bd_t \Phi(t,z),
\quad\text{if $x=\Phi(t,z)$ for some $z\in D'$.}
\end{equation}
Given  $\bar\rho:D'\to\R$ bounded,
the function $\rho:I\times\R^2\to\R$ defined by
\begin{equation}
\label{e:solution}
\rho(t,x) :=
\begin{cases}
  \bar\rho(z) & \text{if $x=\Phi(t,z)$ for some $z\in D'$,} \\
            0 & \text{otherwise,}
\end{cases}
\end{equation}
is a weak solution of the continuity equation
\eqref{e:continuity}.
\end{proposition}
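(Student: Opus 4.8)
The plan is to reduce everything to the original construction on $\Omega$ (the image of the flow) together with a cut-off argument, so that the only real work is to build a divergence-free extension of $w$ from $\Phi(t,D')$ to all of $\R^2$, and to verify that truncating the transported density does not destroy the distributional solution property. First I would observe that, since $D'$ is a closed subset of the open set $D$, we may choose an intermediate open set $D''$ with $D'\subset D''$ and $\overline{D''}\subset D$, and (using that $D$ is simply connected) a potential: because $w$ on $\Omega$ is divergence-free, $w=\nabla^\perp\psi$ for some stream function $\psi:\Omega\to\R$ of class $C^{k-1}$ — here I would use that $\Omega(t)=\Phi(t,D)$ is simply connected for each $t$ (homeomorphic image of the simply connected $D$ under the diffeomorphism $\Phi(t,\cdot)$), which is exactly the point where the simple-connectedness hypothesis on $D$ is needed. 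One must check that $\psi$ can be chosen jointly $C^{k-1}$ in $(t,x)$; this follows from integrating $w$ along paths depending smoothly on $t$, or simply from $\psi(t,\Phi(t,z))=\int$ of $w\cdot$ (tangent) along a fixed reference path in $D$ pushed forward by $\Phi(t,\cdot)$.

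Next I would pick a cut-off function $\chi\in C^\infty(\R^2)$ (or rather a time-dependent one, $\chi(t,\cdot)\in C^\infty$ jointly smooth in $(t,x)$) with $\chi\equiv 1$ on $\Phi(t,D')$ and $\chi\equiv 0$ outside $\Phi(t,D'')$; such a $\chi$ exists and is jointly $C^\infty$ because $t\mapsto\Phi(t,D')$ and $t\mapsto\Phi(t,D'')$ vary smoothly and $\overline{\Phi(t,D')}\subset\Phi(t,D'')$. Then I set $u:=\nabla^\perp(\chi\psi)$ on the region where $\chi\psi$ is defined and $u:=0$ elsewhere; since $\chi\psi$ is a globally defined $C^{k-1}$ function on $\R^2$ (it vanishes near $\bd\Omega(t)$ and outside), $u$ is a globally defined divergence-free velocity field of class $C^{k-1}$. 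On the set $\{x=\Phi(t,z):z\in D'\}$ we have $\chi\equiv 1$, hence $\nabla\chi=0$ there and $u=\nabla^\perp\psi=w=\bd_t\Phi$, which is \eqref{e:compat8}.

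For the solution, I would define $\rho$ by \eqref{e:solution} and check \eqref{e:continuity} in the distributional sense by splitting test functions according to the support. On the open set where $x=\Phi(t,z)$ with $z$ in the \emph{interior} relative to $D'$ — more precisely, on $\Omega':=\{(t,x):x\in\Phi(t,\mathrm{int}\,D')\}$ if $D'$ has nonempty interior, and in general one argues by the transport identity directly — $\rho$ is transported by $u=w$, so it solves the continuity equation there by the remark preceding the proposition. Away from $\overline{\{x=\Phi(t,z):z\in D'\}}$ the function $\rho$ vanishes identically, so the equation holds trivially. The delicate point — and the step I expect to be the main obstacle — is the behavior across the ``boundary'' $\partial$ of the transported set $\{x=\Phi(t,z):z\in\bd D'\}$: here $\rho$ may jump, but the key observation is that $u$ is \emph{tangent} to this moving interface. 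Indeed the interface at time $t$ is $\Phi(t,\bd D')$, and its normal velocity equals the normal component of $w=\bd_t\Phi$ by construction (the interface is literally advected by the flow $\Phi$), which in turn equals the normal component of $u$ on that set by \eqref{e:compat8}. Thus $\rho(t,x)=1_{\{x=\Phi(t,z):z\in D'\}}(x)\cdot(\text{transported }\bar\rho)$ has its discontinuity set advected by $u$, and the general principle recorded in \S\ref{s:compatible} (a characteristic function whose boundary is transported by a divergence-free $u$ solves \eqref{e:continuity} distributionally) — combined with the fact that $\bar\rho$ itself is transported on the interior — gives that $\rho$ is a distributional solution on all of $I\times\R^2$. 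Concretely I would verify this by testing against $\varphi\in C_c^\infty$, writing $\int\int\rho(\bd_t\varphi+u\cdot\nabla\varphi)$, changing variables $x=\Phi(t,z)$ on the set $\{z\in D'\}$, and using $\bd_t[\varphi(t,\Phi(t,z))]=\bd_t\varphi+w\cdot\nabla\varphi=\bd_t\varphi+u\cdot\nabla\varphi$ together with $J\Phi\equiv 1$, so the integral collapses to a boundary term in $t$ that vanishes by compact support — the area-preservation $J\Phi\equiv1$ being exactly what makes the change of variables produce no extra Jacobian factor.
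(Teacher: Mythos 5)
Your proposal is correct and follows essentially the same route as the paper: a stream function obtained from simple-connectedness of $\Omega(t)=\Phi(t,D)$, multiplied by a cut-off equal to $1$ near the transported copy of $D'$, with $u:=\nabla^\perp(\text{truncated potential})$, and the solution property deduced from the fact that $u$ agrees with $w$ on a neighborhood of $\supp\rho$. The paper realizes your time-dependent cut-off explicitly as $g(z)$ in the Lagrangian variable $z=\Phi(t,\cdot)^{-1}(x)$, with $g\equiv 1$ on an open \emph{neighborhood} of $D'$ (which is what you need to conclude $\nabla\chi=0$ on $\Phi(t,D')$), and this automatically gives the joint regularity you assert for $\chi$.
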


\begin{remark}
The assumption that $D$ is simply connected
can be weakened, but not entirely removed.
Indeed, take $D:=\R^2\setminus\{0\}$ and let
$\{\Phi(t,\cdot): \, t\ge 0\}$ be the flow on $D$
associated with the (autonomous) velocity field $w(x):=x/|x|^2$.
Since $w$ is divergence-free on~$D$,
the flow is area preserving.
Consider now a curve $\Gamma$ 
that winds around the origin once counterclockwise. 
Then the flux through $\Gamma$ of any divergence-free
velocity field $u$ defined on $\R^2$ must be $0$, while
the flux of $w$ is $2\pi$, since the
distributional divergence of $w$ on $\R^2$ is $2\pi\, \delta_0$,
where~$\delta_0$ is  the Dirac mass at the origin.
This simple example shows that \eqref{e:compat8} cannot hold, 
if $\Phi(t,D')$ contains such a curve $\Gamma$ for some time $t$.
\end{remark}

Informally, $u$ is obtained by truncating $w$ on $D'$ and extending 
by zero. 
The difficulty in doing so is ensuring the divergence-free condition.
As customary to circumvent this problem, we truncate instead a potential
of $w$. We let $w$ be given by \eqref{e:velocity}, and choose a potential
$\phi$ for $w$.
We then multiply this potential by a suitable cut-off function, 
which agrees with $1$ on $D'$, and define $u$ as the velocity associated
to the new potential, which is automatically divergence-free.
We now present the proof in detail.

\begin{proof}[Proof of Proposition~\ref{e:tronc}]
We begin by selecting a smooth cut-off function
$g:\R^2\to [0,1]$ that agrees with $1$ on a open neighborhood of
$D'$ and has support contained in $D$.
We choose a point $z_0\in D$, which will be used to normalize the potential.
Since $D$ is simply connected, $\Omega(t)$ is simply
connected for every $t\in I$, and consequently the divergence-free
velocity field $w(t,\cdot)$ admits a unique potential
$\phi(t,\cdot)$ in the sense of \S\ref{s:potential} that satisfies  the
normalization condition
\begin{equation}
\label{e:renorm}
\phi(t,x_0(t))=0,
\quad\text{where}\quad
x_0(t):=\Phi(t,z_0)
\, .
\end{equation}
We then define the truncated potential
$\varphi(t,\cdot):\R^2\to \R$ by
\begin{equation}
\label{e:truncation-g}
\varphi(t,x) :=
\begin{cases}
  \phi(t,x) \, g(z)
    & \text{if $x=\Phi(t,z)$ for some $z\in D$, } \\
  0 & \text{otherwise,}
\end{cases}
\end{equation}
and finally take $u:= \nabla^\perp\varphi$.

Since $\Phi$ is of class $C^k$, both $w$  and $\phi$ are of class
$C^{k-1}$ in both variables, and $\phi$ is of class $C^k$ in $x$. 
Clearly the same holds for
$\varphi$, which in turn implies that $u$ is of class $C^{k-1}$.
Moreover, $\varphi$ agrees by construction with $\phi$
on an open neighborhood~$U$ of the set
of all points $\Phi(t,z)$ with $t\in I$, $z\in D'$,
and therefore $u$ agrees with $w$ on $U$.
In particular, \eqref{e:compat8} holds.

Next, we observe that  $\rho$ is obtained by transporting $\bar\rho$
with the flow $\Phi$, and hence it solves the continuity
equation $\bd_t\rho +\dive(w\rho)=0$ on $\Omega$.
On the other hand, $u$ and $v$ agree on $U$,
which contains the support of $\rho$, and therefore $\rho$
solves the continuity equation $\bd_t\rho +\dive(u\rho)=0$ in $\R^2$
as well.
\end{proof}

Let $\Gamma$ be a curve in the plane.
In the next lemma, we modify the definition of
the parametrization $\Psi$ of the tubular
neighborhood $B(\Gamma,r)$ given in \eqref{e:param}, in order to obtain
an area-preserving map.

\begin{lemma}
\label{s:modtub}
Let $\Gamma$ be a proper curve
parametrized by a path $\gamma:\R\to\R^2$ of class
$C^k$ with $k\ge 3$, such that
$| \dot\gamma(\cdot) | =\ell $ for some constant~$\ell$
and the tubular radius $\bar r$ of $\Gamma$ is strictly positive.
Let $r$ be a positive number such that $r \leq \ell \bar r / 2$ and
let $\Phi:\R\times(-r,r)\to\R^2$
be the map defined by
\begin{equation}
\label{e:param2}
\Phi(s,y):= \gamma(s) + \alpha(s,y / \ell) \, \eta(s)
\quad\text{with}\quad
\alpha(s,y'):= \frac{ 2 y' }{ 1 + \sqrt{1 -2   y'  \kappa(s)} }
\, .
\end{equation}
Then $\gamma(\cdot)=\Phi(\cdot,0)$ and
$\Phi$ is an area-preserving diffeomorphism of
class $C^{k-2}$, the image of which is contained in the
tubular neighborhood $B(\Gamma,2r/\ell)$ and contains~$B(\Gamma,r/(2\ell))$.
\end{lemma}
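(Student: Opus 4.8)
The plan is to write $\Phi$ as the composition of the standard tubular parametrization with a reparametrization of the normal coordinate, so that every assertion follows from known properties of the former together with elementary one-dimensional properties of the latter. Let $\Psi(s,z):=\gamma(s)+z\,\eta(s)$ be the map of \eqref{e:param}, and set $\Theta(s,y):=\bigl(s,\,\alpha(s,y/\ell)\bigr)$, so that $\Phi=\Psi\circ\Theta$. Two identities underlie everything. Differentiating $\Psi$ and using $\dot\gamma=\ell\tau$ together with $\dot\eta=-\ell\kappa\tau$ (a consequence of $\ddot\gamma=\ell^2\kappa\eta$) gives $\partial_s\Psi(s,z)=\ell\bigl(1-z\kappa(s)\bigr)\tau(s)$ and $\partial_z\Psi(s,z)=\eta(s)$, hence $\det\nabla\Psi(s,z)=\ell\bigl(1-z\,\kappa(s)\bigr)$. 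On the other hand, writing $w(s,y):=\sqrt{1-2(y/\ell)\kappa(s)}$, the explicit formula for $\alpha$ yields after a short manipulation the quadratic identity $\alpha(s,y/\ell)-\tfrac{\kappa(s)}{2}\,\alpha(s,y/\ell)^2=y/\ell$, from which one reads off
\[
1-\kappa(s)\,\alpha(s,y/\ell)=w(s,y),
\qquad
\partial_y\bigl[\alpha(s,y/\ell)\bigr]=\frac{1}{\ell\,w(s,y)}\,,
\]
the second relation being obtained by differentiating the first in $y$.

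Before using these, observe that all quantities are well defined on $\R\times(-r,r)$: since $2r/\ell\le\bar r<1/|\kappa(s)|$ and $|y|<r$ one has $2(|y|/\ell)|\kappa(s)|<1$, so the radicand $1-2(y/\ell)\kappa(s)$ is strictly positive and $w>0$ there. Hence $\alpha(s,y/\ell)$ is a smooth function of $y$ and of $\kappa(s)\in C^{k-2}$ with denominator $1+w\ge1$, so it is of class $C^{k-2}$; since $\Psi\in C^{k-1}$ (see \S\ref{s:paths}), the composition $\Phi=\Psi\circ\Theta$ is of class $C^{k-2}$, which is meaningful because $k\ge3$. Moreover $\alpha(s,0)=0$, so $\Phi(\cdot,0)=\Psi(\cdot,0)=\gamma(\cdot)$. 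The area-preserving property is now immediate from the chain rule and the two displayed identities:
\[
\det\nabla\Phi(s,y)
=\det\nabla\Psi\bigl(s,\alpha(s,y/\ell)\bigr)\cdot\partial_y\bigl[\alpha(s,y/\ell)\bigr]
=\ell\,w(s,y)\cdot\frac{1}{\ell\,w(s,y)}=1\,,
\]
so in particular $\Phi$ is everywhere a local diffeomorphism.

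It remains to prove injectivity and the two inclusions for the image. For each fixed $s$ the map $y\mapsto\alpha(s,y/\ell)$ has positive derivative $1/(\ell w)$, hence is a strictly increasing homeomorphism of $(-r,r)$ onto an open interval $I_s$, and therefore $\Theta$ is injective. From $|\alpha(s,y/\ell)|=\tfrac{2|y|/\ell}{1+w}<2|y|/\ell$ we get $I_s\subset(-2r/\ell,2r/\ell)\subset(-\bar r,\bar r)$, while from $w<\sqrt2$ (which follows from $1-2(y/\ell)\kappa(s)<2$) we get $|\alpha(s,y/\ell)|>\tfrac{2|y|/\ell}{1+\sqrt2}$; letting $|y|\uparrow r$ shows each endpoint of $I_s$ has modulus at least $(2\sqrt2-2)r/\ell>r/(2\ell)$, so $I_s\supset\bigl(-r/(2\ell),r/(2\ell)\bigr)$. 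By \S\ref{s:paths}, $\Psi$ is a $C^{k-1}$ diffeomorphism of $\R\times(-\bar r,\bar r)$ onto $B(\Gamma,\bar r)$ with $p_\Gamma(\Psi(s,y))=\gamma(s)$, and since $|\Psi(s,y)-\gamma(s)|=|y|$ it maps $\R\times(-\rho,\rho)$ onto $B(\Gamma,\rho)$ for every $0<\rho\le\bar r$. Composing with the injective $\Theta$ therefore shows that $\Phi$ is an injective $C^{k-2}$ immersion, hence a $C^{k-2}$ diffeomorphism onto the open set $\Psi\bigl(\Theta(\R\times(-r,r))\bigr)$; and the inclusions $\R\times\bigl(-r/(2\ell),r/(2\ell)\bigr)\subset\Theta(\R\times(-r,r))\subset\R\times(-2r/\ell,2r/\ell)$ yield $B(\Gamma,r/(2\ell))\subset\Phi(\R\times(-r,r))\subset B(\Gamma,2r/\ell)$, as required.

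The one step with genuine content is the identity $\det\nabla\Phi\equiv1$: the formula for $\alpha$ is precisely the solution of the separable equation $\ell\bigl(1-\kappa\,\alpha\bigr)\,\partial_y[\alpha(\cdot,\cdot/\ell)]=1$ with $\alpha(\cdot,0)=0$, tailor-made to cancel the area distortion $\ell(1-z\kappa)$ of the ordinary tube map, so this identity is the reason for the shape of $\alpha$ rather than a coincidence. Everything else is bookkeeping; the only place needing a little care is keeping the radicand positive and extracting the clean constants $2r/\ell$ and $r/(2\ell)$ for the image.
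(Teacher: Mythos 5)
Your proof is correct and follows essentially the same route as the paper's: both factor $\Phi=\Psi\circ\Theta$ through the standard tube map $\Psi$, obtain $\det\nabla\Phi\equiv 1$ by combining the Jacobian $\ell(1-\kappa\alpha)$ of $\Psi$ with $\partial_y[\alpha(s,y/\ell)]=1/(\ell(1-\kappa\alpha))$, deduce the diffeomorphism property from the injectivity of $\Psi$ on $\R\times(-\bar r,\bar r)$ together with the strict monotonicity of $y\mapsto\alpha(s,y/\ell)$, and get the image inclusions from the two-sided bound $|y|/(2\ell)\le|\alpha|\le 2|y|/\ell$. The only difference is that you spell out the ``direct computation'' the paper leaves implicit, via the quadratic identity $\alpha-\tfrac{\kappa}{2}\alpha^{2}=y/\ell$, which is a welcome clarification but not a different argument.
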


\begin{proof}
Using the assumption on $r$ and the fact that the
tubular radius $\bar r$ is no larger than the
curvature radius $1/|\kappa|$ of the curve, it follows 
that $r\le \ell/(2|\kappa|)$, which implies that
$\Phi$ is well defined on $\R\times(-r,r)$.

We observe that $\alpha$ is a function of class $C^{k-2}$
because $\kappa$ is of class $C^{k-2}$, and that
\begin{equation}
\label{e:param3}
\Phi(s,y) = \Psi\big(s,\alpha(s,y/\ell)  \big)
\quad\text{for every $s,y$,}
\end{equation}
where $\Psi$ is defined in \eqref{e:param}.
Since $\Psi$ is a diffeomorphism of class $C^{k-1}$
on $\R\times (-\bar r,\bar r)$ and the function
$y \mapsto \alpha(s,y/\ell)$ has strictly positive
derivative for every $s$ and maps~$(-r,r)$ into
$(-\bar r,\bar r)$,  $\Phi$ is a
diffeomorphism of class $C^{k-2}$.

The fact that $\Phi$ is area-preserving,
that is, $J\Phi=1$ everywhere, can be verified
by a direct computation.
For this purpose, it is convenient to write the gradient of
$\Phi$ at $(s,y)$ using the canonical basis
of $\R^2$ for the domain, and the orthonormal basis
$\tau(s),\eta(s)$, associated to the foliation of the tubular
neighborhood induced by $\Gamma$, for the codomain.
This choice gives that
\[
\nabla \Phi(s,y) =
\begin{pmatrix}
  \ell(1+ \kappa\, \alpha) & 0 \\
  \bd_s\alpha &  \frac{1}{\ell} \bd_{y'}\alpha
\end{pmatrix}
\, ,
\]
where $\kappa = \kappa(s)$ and $\alpha=\alpha(s,y/\ell)$.

Finally, the fact that the image of $\Phi$ is contained
in $B(\Gamma,2r/\ell)$ and contains $B(\Gamma,r/(2\ell))$
follows from Formula~\eqref{e:param3} and the estimate
$r/(2\ell) \le \alpha\le 2r/\ell$.
\end{proof}

In the next subsections, we associate a velocity field $u$
and a solution $\rho$ of the continuity equation \eqref{e:continuity}
to a given time-dependent proper curve $\Gamma$.
This construction will provide the building blocks for the example
described in the next section.

\begin{parag}[Velocity field associated to a time-dependent curve]
\label{s:u}

Let  $\Gamma$ be a time-dependent curve
parametrized by a path $\gamma:I\times\R\to\R^2$
of class~$C^k$ with $k\ge 3$. 
Let  $r$ be a positive number such that
%%%%%
\begin{enumerate}
[label=(\alph*), ref=(\alph*),
leftmargin=30pt, itemsep=2pt]

\item\label{s:u.1}
$|\dot\gamma(t,\cdot)|$ is equal to some $\ell(t)>0$
for every $t\in I$; 

\item\label{s:u.2}
$2r/\ell(t)$ is smaller or equal than the tubular radius 
of $\Gamma(t)$ for every $t\in I$.
\end{enumerate}
For every $t\in I$ we let $\Phi(t,\cdot)$ be the
diffeomorphism on $D:=\R\times(-r,r)$ defined by \eqref{e:param2},
and take the velocity field $u:I\times\R^2\to\R^2$
as in Proposition~\ref{e:tronc},
having set $D':=\R\times[-r/2,r/2]$.

A close inspection of the proof of Proposition~\ref{e:tronc}
shows that the construction of $u$ depends on the choice of the
point $z_0$ in $\R\times(-r,r)$, used in the normalization
condition \eqref{e:renorm}, and on the choice of the cut-off function $g$.
For the construction at hand, we choose: 
%%%%%
\begin{enumerate}
[label=(\alph*), ref=(\alph*), start=3,
leftmargin=30pt, itemsep=2pt]

\item\label{s:u.3}
$z_0:=(0,0)$;

\item\label{s:u.4}
$g(s,y):=\bar g(y/r)$,
where $\bar g:\R\to[0,1]$ is a fixed smooth function
that is \emph{even}, takes value $1$ in a neighborhood of $[-1/2,1/2]$,
and its support is contained in $(-1,1)$.
\end{enumerate}
\end{parag}

\begin{parag}[Canonical solution associated to a time-dependent curve]
\label{s:rho}
We fix an \emph{even} bounded function $\bar\rho:[-1/2,1/2]\to\R$
with zero integral over space and let~${\rho:I\times\R^2\to\R}$ be the solution of
the continuity equation \eqref{e:continuity} obtained by
replacing the function $\bar\rho(z)$ in formula \eqref{e:solution}
with $\bar\rho(y/r)$, that is,
\begin{equation}
\label{e:solution2}
\rho(t,x) :=
\begin{cases}
  \bar\rho(y/r) & \text{if $x=\Phi(t,s,y)$ for some $s\in\R$, $y\in[-r/2,r/2]$,} \\
            0 & \text{otherwise.}
\end{cases}
\end{equation}
\end{parag}

\begin{remark}\label{r:boh}
\quad
%%%%%
\begin{enumerate}
[label=(\roman*), ref=(\roman*), 
leftmargin=0pt, itemsep=2pt, itemindent=30pt]

\item
\label{r:boh.1} 
The velocity field $u$ constructed above is uniquely determined 
by the choice of the parametrization $\gamma$, the number $r$, 
and the function $\bar g$.
Since~$\bar g$ is fixed for the rest of the paper, the relevant 
parameters are therefore $\gamma$ and $r$.

\item
\label{r:boh.2} 
The solution $\rho$ depends only on purely geometric quantities,
and not on the choice of the parametrization $\gamma$.
More precisely, using formulas~\eqref{e:param2} and~\eqref{e:solution2}
one readily checks that the value $\rho(t,x)$ is zero if
$\mathrm{dist} (x,\Gamma(t)) > r/\ell(t)$, and otherwise it depends on:
%%%%%
\begin{itemize}
\item $r$, $t$, and $\ell(t)$;
\item the distance $\mathrm{dist} (x,\Gamma(t))$;
\item the curvature of $\Gamma(t)$ at the projection of $x$ on $\Gamma(t)$.
\end{itemize}

\item 
\label{r:boh.3} 
By construction, for every $t\in I$, the velocity field $u(t,\cdot)$ is 
supported in~$\Phi(t,\R\times(-r,r))$, which is contained in
$B(\Gamma(t),2r/\ell(t))$, while $\rho(t,\cdot)$ is supported in
$\Phi(t,\R\times(-r/2,r/2))$, which is contained in $B(\Gamma(t),r/\ell(t))$
(cf.~Lemma~\ref{s:modtub}).

\item 
\label{r:boh.4}
It follows from Formula~\eqref{e:solution2} that the  solution~$\rho(t,\cdot)$
has zero integral over space, since the initial data $\bar \rho$ is assumed
to have the same property and the change of variable $\Phi$ is area preserving.
This property will be used in Section~\ref{snake}.
\end{enumerate}
\end{remark}

We suppose now that we are given two time-dependent curves $\Gamma$ and
$\longtilde{\Gamma}$, and we let $u, \tilde u$ and $\rho, \tilde\rho$
be, respectively,  the corresponding velocity fields and solutions
constructed in~\S\ref{s:u} and \S\ref{s:rho}.
In the next section, we will exploit a kind of localization principle,
stating that,
if $\Gamma$ and $\longtilde{\Gamma}$ agree in a
neighborhood of some point~$x_0$,
then $u$, $\tilde u$ and
$\rho$, $\tilde\rho$ also agree in a neighborhood of $x_0$.

In Lemma~\ref{s:locality} we give a precise statement of this principle,
specifically designed for the applications described in the next section.

We first introduce some additional notation.

\begin{parag}[Centered sub-arcs and curved rectangles]
Let $\Gamma$ be a curve parametrized by a path
$\gamma$ such that $|\dot\gamma(\cdot)| = \ell$ constant,
 and let  $x_0=\gamma(s_0)$ be a point on~$\Gamma$. For a given $\delta>0$,
we denote by $I(\Gamma,x_0,\delta)$ the
(centered) sub-arc given by all~$x\in\Gamma$ such that their geodesic
distance from $x_0$ is strictly less than $\delta$.  That is,
\[
I(\Gamma,x_0,\delta)
:= \gamma \big( (s_0 - \delta/\ell,s_0 + \delta/\ell) \big)
\, .
\]
Moreover, given a $\delta'>0$ that  is no larger  than the tubular
radius of $\Gamma$, we denote by $R(\Gamma,x_0,\delta,\delta')$
the (open, centered) curved rectangle given by  all $x\in\R^2$ such that
their distance from $\Gamma$ is strictly less than $\delta'$ and their
projection on $\Gamma$ belongs to $I(\Gamma,x_0,\delta)$.
That is,
\[
R(\Gamma,x_0,\delta,\delta')
:= \Psi \big( (s_0-\delta/\ell,s_0+\delta/\ell)\times(-\delta',\delta') \big)
\, ,
\]
where again $\Psi$ is defined in \eqref{e:param}.
\end{parag}

\begin{lemma}
\label{s:locality}
Let $\Gamma$ and $\longtilde\Gamma$ be two time-dependent, proper curves
of class~$C^k$ with $k\ge 3$, parametrized by $\gamma, \tilde\gamma:I\times\R\to\R^2$, 
respectively.
Assume that~\ref{s:u.1} and~\ref{s:u.2} in~\S\ref{s:u}
are verified by $\gamma$ and $\tilde\gamma$
with the same $\ell:I\to(0,+\infty)$ and the same $r>0$. Let
$u, \tilde u$ be defined as in~\S\ref{s:u} and $\rho, \tilde\rho$
be defined as in~\S\ref{s:rho}.
Assume in addition that there exist $\delta>0$ and $s_0 \in \R$ such that,
for every~$t\in I$,
%%%%%
\begin{enumerate}
[label={\rm(\alph*)}, ref=(\alph*),
leftmargin=30pt, itemsep=2pt]

\item%\label{s:locality.1}
$\gamma(t,s_0) = \tilde\gamma(t,0) = : x_0(t)$;

\item\label{s:locality.2}
the sub-arcs $I \big( \Gamma(t),x_0(t),\delta \big)$
and $I \big( \longtilde\Gamma(t),x_0(t),\delta \big)$
coincide and have the same orientation;

\item\label{s:locality.3}
denoting by $v_\nn$  the normal velocity of $\Gamma$, we have
\[
\int\limits_{\gamma(t,[0,s_0])}
\hskip -5 pt v_\nn(t,x) \, d\sigma(x) = 0
\,.
\]
\end{enumerate}

Then $u(t,x) = \tilde u(t,x)$
and $\rho(t,x) = \tilde\rho(t,x)$
for every $t\in I$ and every $x$ in
the curved rectangle
$R(t) := R \big( \Gamma(t),x_0(t),\delta,2r/\ell(t) \big)$.
\end{lemma}

The proof is not difficult, but we must revisit
the entire construction of~$u$ and~$\rho$,
which is divided between \S\ref{s:u}, \S\ref{s:rho},
and the proofs of Proposition~\ref{e:tronc}
and Lemma~\ref{s:modtub}.

\begin{proof} 
We fix $t\in I$.
Using that \ref{s:u.1} and \ref{s:u.2} are assumed verified, and 
the fact that $\gamma(t,\cdot)$ and~$\tilde\gamma(t,\cdot)$
have the same parametrization speed $\ell(t)$, we obtain that
\[
\gamma(t,s+s_0) = \tilde\gamma(t,s),
\quad\text{when $|s|\le\delta/\ell$.}
\]
From  this identity, it readily follows
that the flows $\Phi$ and $\longtilde{\Phi}$
defined by \eqref{e:param2} satisfy
\begin{equation}
\label{e:7.7.0g}
\Phi(t,s+s_0,y) = \longtilde{\Phi}(t,s,y),
\quad\text{when $|s|\le\delta/\ell$, $|y|<r$,}
\end{equation}
which implies that the velocity fields $w$
and $\longtilde w$ defined by \eqref{e:velocity} satisfy
\begin{equation}
\label{e:7.7g}
w(t,x) = \longtilde w(t,x),
\quad\text{when $x\in U(t)$,}
\end{equation}
where
\[
U(t)
:= \big\{ \longtilde \Phi(t,s,y) \, : \ |s|\le\delta/\ell \, , \ |y|<r \big\}
\, .
\]

\smallskip
We let now $\phi(t,\cdot)$ and $\longtilde\phi(t,\cdot)$ be the potentials
of $w(t,\cdot)$ and $\longtilde w(t,\cdot)$, respectively,
constructed in the proof of Proposition~\ref{e:tronc}.
We claim that
\begin{equation}
\label{e:7.8g}
\phi(t,x) = \longtilde\phi(t,x)
\quad\text{when $x\in U(t)$.}
\end{equation}
Since the corresponding fields agree on $U$ and $U$ is connected, 
it suffices to show that these potentials agree at one point in $U$. 
We will show that they both vanish at $x_0(t)$.
Indeed, formula \eqref{e:renorm}, Assumption \ref{s:u.3} in
\S\ref{s:u}, and the identities $\Phi(t,0,0) = \gamma(t,0)$
and $\longtilde\Phi(t,0,0) = \tilde\gamma(t,0)$
yield
\[
\phi \big( t,\gamma(t,0) \big)
= \longtilde\phi \big( t, \tilde\gamma(t,0) \big)
= 0
\,.
\]
Since $\tilde\gamma(t,0) = x_0(t)$,
it follows that
\[
\tilde\phi(t,x_0(t)) = 0
\, .
\]
Finally, since $x_0(t) = \gamma(t,s_0)$,
taking into account again~\ref{s:u.3} and the identity
$v_\nn=w\cdot\eta$, we obtain 
\begin{align*}
\phi(t,x_0(t))
= \phi(t,\gamma(t,s_0))
& = \phi(t,\gamma(t,s_0)) - \phi(t,\gamma(t,0)) \\
& = \hspace{-8pt} \int\limits_{\gamma(t,[0,s_0])}
    \hspace{-8pt} w\cdot\eta \, d\sigma
  = \hspace{-8pt} \int\limits_{\gamma(t,[0,s_0])}
   \hspace{-8pt} v_\nn \, d\sigma
  = 0 \, .
\end{align*}
The proof of \eqref{e:7.8g} is complete.

The rest of the proof is straightforward.
From \eqref{e:7.8g} and the choice of the cut-off function $g$
(see \ref{s:u.4} in \S\ref{s:u}), we have that the
truncated potentials~$\varphi(t,\cdot)$ and $\tilde\varphi(t,\cdot)$,
defined by \eqref{e:truncation-g}, agree on $U(t)$.
Furthermore, one can show that both potentials vanish
on $R(t)\setminus U(t)$, and therefore they agree on
the whole $R(t)$, which implies that the corresponding
velocity fields $u(t,\cdot)$ and $\tilde u(t,\cdot)$
agree on~$R(t)$.

\smallskip
It remains to show that
\begin{equation}
\label{e:7.11g}
\rho(t,x)=\tilde\rho(t,x)
\quad\text{when $x\in R(t)$,}
\end{equation}
but this fact follows from Remark~\ref{r:boh}\ref{r:boh.2}.
\end{proof}

%
%	SECTION 8
%

\section{Second example: Peano snake}
\label{snake}

In this final section, we verify Assumptions~\ref{ass2} and \ref{ass3}
for any $s$ and $p$ and construct a specific example of
quasi-self-similar evolution. 
By doing so, we validate the assumptions of Theorem~\ref{t:ass}. 
In particular, we establish the existence of a bounded, divergence-free 
velocity field supported in the unit square, which is Lipschitz
continuous uniformly in time, and the existence of a solution of the
continuity equation~\eqref{e:continuity} with the property that its 
functional and geometric mixing scales decay exponentially in time.
We call this example  the ``Peano snake", since the construction is 
reminiscent of the iterative construction of the Peano curve 
(cf.~Figure~\ref{f:ideatwo}).

The velocity field and the solution that we construct
are smooth in both time and space (see Remark~\ref{r:smoothC}).
However, any Sobolev norm of order higher than one is not bounded 
uniformly in time.

We proceed as follows. 
In~\S\ref{ss:keycomb}, we first describe the combinatorial
structure of our example. 
Using the tools provided in Section~\ref{geo2}, we  then prove 
in~\S\ref{ss:contruC}, \S\ref{s:solse}, and~\S\ref{s:regfin} that the 
construction of a basic family verifying Assumptions~\ref{ass2} and \ref{ass3}
can be reduced to the construction of two time-dependent, proper curves 
satisfying a certain number of geometric conditions. 
These conditions are given in~\S\ref{p:basiccurvesC} 
and~\S\ref{r:equivalenceC}.
Finally in~\S\ref{s:costruzione-g}, \S\ref{s:costruzione-g1}, 
and \S\ref{s:costruzione-g2}, we present the actual construction 
of the two curves.

\begin{parag}[Combinatorial structure]
\label{ss:keycomb}
We begin by illustrating the combinatorial structure
of this quasi-self-similar example. The complete construction
is rather complex and the purpose of this subsection
is to provide a graphical representation of the solution for 
the first two steps in the iteration, in order to help the reader 
visualize our construction. 
We omit all details that are not needed for this purpose.

The starting point is a basic family (in the sense of Assumption~\ref{ass2}) 
consisting of six pairs of velocity fields~$u_j$ and solutions $\rho_j$.
At this stage, we do not directly define~$u_j$ and~$\rho_j$,
rather we describe the supports of the solutions $\rho_j$,
which we denote by~$E_j$. In fact, we describe the sets $E_j$
only for~$j=1,2$  and only at the 
initial time $t=0$ and at the final time $t=1$ 
(Figure~\ref{f:idea}). 
The sets $E_j$ for  $j=3,\dots,6$ 
are obtained by means of appropriate rotations.

\medskip
\begin{figure}[ht]
\begin{center}
  \includegraphics{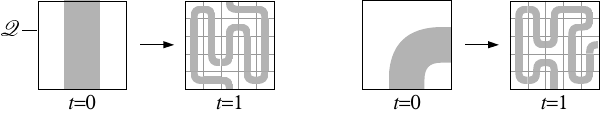}
  \caption{An example of basic family for a quasi-self-similar
  construction: the sets in gray are the supports $E_j$ 
  of the solutions~$\rho_j$ for $j=1$ (left) and $j=2$ (right)
  and at times $t=0$ and $t=1$.}
  \label{f:idea}
\end{center}
\end{figure}

As depicted in Figure~\ref{f:idea}, at time $t=0$ the sets $E_1$
and $E_2$ are, respectively, a straight strip (Figure~\ref{f:idea}, left) 
and a bent strip  (Figure~\ref{f:idea}, right), 
while at time $t=1$ they are composed by $25$ rotated and translated 
copies of the two basic elements, scaled by a factor $\lambda=1/5$.

\medskip

To obtain the velocity $u$ and solution $\rho$, we implement the 
construction in~\S\ref{ss:ass} with $\bar \lambda = 1/2$. 
In the first step of the construction, we must choose a pair $(u_j,\rho_j)$ 
from the basic family for every square in the tiling $\Til_{1/2}$. 
Our choice is such that at time $t=0$ 
the corresponding sets $E_j$ are all bent strips, and 
are patched together to create the (almost round) annulus shown 
in Figure~\ref{f:ideatwo}, left.

This annulus is the support of the initial data $\bar{\rho}$.
Using Figure~\ref{f:idea} we can draw the support of the solution $\rho$ 
at time $t=T_1=1$ (Figure~\ref{f:ideatwo}, middle), then at 
time $t=T_2=1+\tau$ (Figure~\ref{f:ideatwo}, right), and so on.

\begin{figure}[ht]
\begin{center}
  \includegraphics[scale=1]{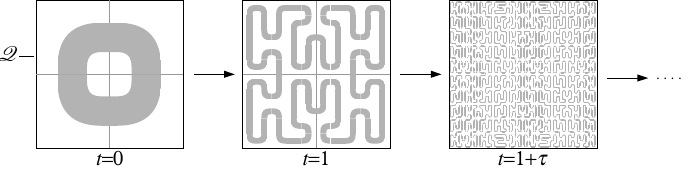}
  \caption{Initial choice of the four basic elements and further steps 
  in the quasi-self-similar evolution: the grayed set is the support
  of the solution at each step.}
  \label{f:ideatwo}
\end{center}
\end{figure}

While Figures~\ref{f:idea} and \ref{f:ideatwo} give an accurate illustration
of the combinatorial structure of the example that will be presented in 
full detail in the rest of this section, a rigorous justification of the
example requires a careful analysis, as  we need to produce a smooth solution
$\rho$ that is transported by a smooth, divergence-free  velocity field $u$.
\end{parag}

\begin{parag}[Conditions on the basic curves]
\label{p:basiccurvesC}
The fundamental ingredient of our construction will be
two time-dependent proper curves $\Gamma_1(t)$ and $\Gamma_2(t)$, 
corresponding to the two sets $E_1$ and $E_2$ described in~\S\ref{ss:keycomb},
with parametrizations~$\gamma_1$,~$\gamma_2 : [0,1] \times \R \to \R^2$ of
class $C^\infty$, such that, for every $t\in [0,1]$:
%%%%%
\begin{enumerate}
[label=(\alph*), ref=(\alph*),
leftmargin=30pt, itemsep=2pt]

\item\label{p:basiccurvesC.1}
$\gamma_1(t,0) = \gamma_2(t,0) = (0,-1/2)$, $\gamma_1(t,1) = (0,1/2)$,
and $\gamma_2(t,1) = (1/2,0)$;

\item\label{p:basiccurvesC.2}
there exists a constant $\delta>0$ such that outside the square
$(1-\delta)\Qone$, homothetic to $\Qone$, each of the curves 
$\Gamma_1(t)$ and $\Gamma_2(t)$ agrees with two unbounded
half-lines, orthogonal to $\bd\Qone$ and passing through 
the points defined in (a);

\item\label{p:basiccurvesC.3}
$| \dot \gamma_1 (t,s)| = | \dot \gamma_2 (t,s)| =: \ell(t)$
for every $s\in\R$, 
and in particular the intersections of the curves $\Gamma_1(t)$
and $\Gamma_2(t)$ with $\Qone$ have length $\ell(t)$;

\item\label{p:basiccurvesC.4}
denoting by $v_\nn^1$ and $v_\nn^2$, the normal velocity of 
$\Gamma_1$ and $\Gamma_2$, respectively, then 
\[
  \int\limits_{\gamma_1(t,[0,1])} \hspace{-8pt} v_\nn^1(t,x) \, d\sigma(x) 
= \hspace{-8pt}
  \int\limits_{\gamma_2(t,[0,1])} \hspace{-8pt} v_\nn^2(t,x) \, d\sigma(x) 
= 0 
\,;
\]

\item\label{p:basiccurvesC.5}
for every square $Q$ in the tiling $\Til_{1/5}$ of $\Qone$, 
the sub-arc $\Gamma_1(1) \cap Q$ can be written as a translated,
rescaled, and possibly rotated copy of $\Gamma_1(0) \cap \Qone$ 
or~$\Gamma_2(0) \cap \Qone$; the same holds for $\Gamma_2(1) \cap Q$.
\end{enumerate}
\end{parag}

\begin{parag}[Simplified geometric conditions]
\label{r:equivalenceC}
In this paragraph we replace some of the conditions in~\S\ref{p:basiccurvesC} 
by purely geometric ones, that is, conditions that
are written in terms of the curves $\Gamma_1$ and $\Gamma_2$ 
and do not involve the parametrizations~$\gamma_1$
and~$\gamma_2$. In \S\ref{s:costruzione-g}, \S\ref{s:costruzione-g1}, 
and~\S\ref{s:costruzione-g2}, we will then be able  to give the curves 
$\Gamma_1$ and $\Gamma_2$ without describing explicitly
the parametrizations $\gamma_1$
and $\gamma_2$.

More precisely, we consider the following alternative conditions,
in which we denote by $\ell_1(t)$ and $\ell_2(t)$
the length of the intersection of the curve $\Gamma_1(t)$ 
and~$\Gamma_2(t)$ with $\Qone$, respectively: 
%%%%%
\begin{enumerate}
[label=(\alph*'), ref=(\alph*'), start=3,
leftmargin=30pt, itemsep=2pt]

\item\label{p:basiccurvesC.3'}
$\ell_1(0) = \ell_2(0)$;
\end{enumerate}
%%%%%
\begin{enumerate}
[label=(\alph*''), ref=(\alph*''), start=3,
leftmargin=30pt, itemsep=2pt]

\item\label{p:basiccurvesC.3''}
the derivatives in $t$ of the functions $\ell_1(t)$
and $\ell_2(t)$ are strictly positive;
\end{enumerate}
%%%%%
\begin{enumerate}
[label=(\alph*'), ref=(\alph*'), start=4,
leftmargin=30pt, itemsep=2pt]

\item\label{p:basiccurvesC.4'}
the area of both connected components of
$\Qone \setminus \Gamma_1(t)$ equals $1/2$, and the same
holds for the area of the two connected components of
$\Qone \setminus \Gamma_2(t)$.
\end{enumerate}
%%%%%
%
We claim that Conditions \ref{p:basiccurvesC.2}, \ref{p:basiccurvesC.3'},
\ref{p:basiccurvesC.3''}, \ref{p:basiccurvesC.4'}, and \ref{p:basiccurvesC.5} 
imply Conditions \ref{p:basiccurvesC.1}-\ref{p:basiccurvesC.5} 
in \S\ref{p:basiccurvesC}.
First of all, \ref{p:basiccurvesC.1} follows from \ref{p:basiccurvesC.2}
by choosing suitable parametrizations~$\gamma_1(t,\cdot)$ and 
$\gamma_2(t,\cdot)$.
Next, we modify such parametrizations in such a way that 
$|\dot \gamma_1(t,\cdot)|$ and $|\dot \gamma_2(t,\cdot)|$ are constant 
for all~$t$. 
This fact, together with~\ref{p:basiccurvesC.1}, entails that 
$|\dot \gamma_1(t,s)| = \ell_1(t)$ and $|\dot \gamma_2(t,s)| = \ell_2(t)$.
Condition~\ref{p:basiccurvesC.3'}, together with condition~\ref{p:basiccurvesC.5},
implies that $\ell_1(1)=\ell_2(1)=5\ell_1(0)$.
Then condition~\ref{p:basiccurvesC.3''} implies that with a change
of variable in $t$ we can achieve $\ell_1(t) = \ell_2(t) =: \ell(t)$
for all $t$, that is, condition~\ref{p:basiccurvesC.3} holds. 
Finally \ref{p:basiccurvesC.4} and \ref{p:basiccurvesC.4'} are equivalent 
by Remark~\ref{s:rembasic}\ref{s:rembasic.3}. 
\end{parag}

\begin{parag}[A preliminary example]
\label{s:combin}
Two curves satisfying some, but unfortunately not all, of the conditions 
in~\S\ref{p:basiccurvesC} are depicted in Figure~\ref{f:comb1} (to be 
compared with Figure~\ref{f:idea}).
In this figure we can see the evolution of $\Gamma_1$ starting from a 
straight segment $\Gamma_1(0)$ inside~$\Qone$, and the evolution of
$\Gamma_2$ starting from a (slightly modified) quarter of circle
$\Gamma_2(0)$  inside~$\Qone$.
Note that both $\Gamma_1(1)$ and $\Gamma_2(1)$ can be written as unions
of $25$ copies of the segment $\Gamma_1(0)$ or of the (modified) quarter
of circle $\Gamma_2(0)$ which are scaled by a factor $1/5$ and suitably
rotated and translated.

\begin{figure}[ht]
\begin{center}
  \includegraphics[scale=1]{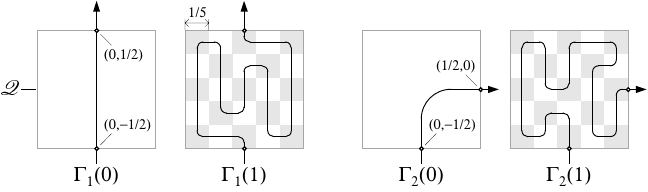}
  \caption{Example of curves satisfying  \ref{p:basiccurvesC.1}, \ref{p:basiccurvesC.2},
  and \ref{p:basiccurvesC.5} in~\S\ref{p:basiccurvesC}.}
  \label{f:comb1}
\end{center}
\end{figure}

From the geometry of $\Gamma_1$ and $\Gamma_2$ as shown in Figure
\ref{f:comb1}, it follows that these curves satisfy
Conditions~\ref{p:basiccurvesC.1}, \ref{p:basiccurvesC.2},
and \ref{p:basiccurvesC.5} (which is the most relevant).
However, Condition~\ref{p:basiccurvesC.3}
fails at $t=0$ and $t=1$, while
Condition~\ref{p:basiccurvesC.4} depends on the parametrization and
cannot be directly verified from the picture.
Moreover, these curves do not satisfy some of the alternative
conditions either, namely Conditions~\ref{p:basiccurvesC.3'}
and \ref{p:basiccurvesC.4'}, while Condition~\ref{p:basiccurvesC.3''}
could be in principle satisfied, at least for some choice of
the curves for $0<t<1$. 

In~\S\ref{s:costruzione-g}, \S\ref{s:costruzione-g1}, and 
\S\ref{s:costruzione-g2} we construct an example of basic curves 
satisfying the conditions in~\S\ref{p:basiccurvesC}
and~\S\ref{r:equivalenceC}. 
That construction is a modification of the present example 
(cf.~Figure~\ref{f:gamma-i+f}) and is significantly more complex.
\end{parag}

In the next subsections, we assume the existence of two basic curves
$\Gamma_1$ and~$\Gamma_2$ as in~\S\ref{p:basiccurvesC}
and~\S\ref{r:equivalenceC}. 
We then use the results in Section~\ref{geo2} to construct some 
associated velocity fields $u_1$ and $u_2$, and solutions $\rho_1$ 
and $\rho_2$ that satisfy Assumption~\ref{ass2}. 
Next, we obtain the velocity field $u$ and the solution $\rho$ by 
implementing the iterative procedure in~\S\ref{ss:ass}, according
to the combinatorial structure in~\S\ref{ss:keycomb}.
Afterwards, we  verify that $u$ satisfies Assumption~\ref{ass3}.
Lastly, we establish additional regularity properties
of $u$ and $\rho$.

\begin{parag}[Construction of $\boldsymbol{u_i}$ and $\boldsymbol{\rho_i}$]
\label{ss:contruC}
We choose a small $r>0$ such that:
%%%%%
\begin{itemize}
[leftmargin=30pt, itemsep=2pt]

\item 
$r < \min\{ \frac{1}{2} r_1(t) \ell(t) ,\, \frac{1}{2} r_2(t) \ell(t) \}$ 
for every $t \in [0,1]$,
where $r_1(t)$ and~$r_2(t)$ are the tubular radii of $\Gamma_1(t)$ and~$\Gamma_2(t)$,
respectively, and $\ell(t)$ is as in \S\ref{p:basiccurvesC}\ref{p:basiccurvesC.3};

\item 
$B(\Gamma_1(t),2r) \subset \Qone$ and $B(\Gamma_2(t),2r) \subset \Qone$
for every $t \in [0,1]$.
\end{itemize}
Such an $r$ exists due to the smoothness of $\Gamma_1$ and
$\Gamma_2$, and Condition~\ref{p:basiccurvesC.2} in \S\ref{p:basiccurvesC}.

Then we follow the steps described  in \S\ref{s:u} and \S\ref{s:rho} to
obtain the associated  time-dependent divergence-free velocity fields
$u_1$ and $u_2$ of class $C^\infty$, and the corresponding smooth 
solutions $\rho_1$ and $\rho_2$. 
One readily checks that:
%%%%%
\begin{enumerate}
[label=(\alph*), ref=(\alph*),
leftmargin=30pt, itemsep=2pt]

\item\label{ss:contruC.1}
$u_1(t,\cdot)$ and $\rho_1(t,\cdot)$ are supported in 
$B(\Gamma_1(t),2r/\ell(t))$, while $u_2(t,\cdot)$ and $\rho_2(t,\cdot)$ 
are supported in $B(\Gamma_2(t),2r/\ell(t))$;

\item\label{ss:contruC.2}
in particular, the supports of $u_1(t,\cdot)$ and $\rho_1(t,\cdot)$ intersect
$\bd \Qone$ inside the segments 
$(-\frac{2r}{\ell(t)} , \frac{2r}{\ell(t)} ) \times \{ -\frac{1}{2} \}$
and $( -\frac{2r}{\ell(t)} , \frac{2r}{\ell(t)} ) \times \{ \frac{1}{2} \}$,
while the supports of~$u_2(t,\cdot)$ and $\rho_2(t,\cdot)$ intersect
$\bd \Qone$ inside the segments 
$( -\frac{2r}{\ell(t)} , \frac{2r}{\ell(t)} ) \times \{ -\frac{1}{2} \}$ 
and $\{ \frac{1}{2} \} \times (-\frac{2r}{\ell(t)},\frac{2r}{\ell(t)} )$;

\item\label{ss:contruC.3}
$u_1(t,\cdot)$ and $u_2(t,\cdot)$ are tangent to the boundary $\bd\Qone$.
\end{enumerate}
As a matter of fact, Properties~\ref{ss:contruC.1} and \ref{ss:contruC.2} 
above follow at once from Remark~\ref{r:boh}\ref{r:boh.3}.

Next we show that $u_1$ is tangent to the segment 
$( -\frac{2r}{\ell(t)} , \frac{2r}{\ell(t)} ) \times \{ \frac{1}{2} \}$.

(The rest of Property~\ref{ss:contruC.3} above  can be proved in a similar way.)

We consider the vertical line $\tilde\Gamma(t)$ with parametrization
$\tilde\gamma(t,s) = (0,\frac{1}{2}+ s \ell(t))$ and let $\tilde u$ be the velocity
field associated to~$\tilde \Gamma$ as in \S\ref{s:u}. 
We now apply Lemma~\ref{s:locality} to~$\tilde\Gamma$ and $\Gamma_1$
with $s_0=1$ and $x_0(t)=(0,\frac{1}{2})$,%
\footnoteb{We observe that Assumption~\ref{s:locality.2} of that
lemma is satisfied with the same $\delta>0$  as in Condition~\ref{p:basiccurvesC.2}
in~\S\ref{p:basiccurvesC}, while assumption~\ref{s:locality.3} in that lemma
follows from Condition~\ref{p:basiccurvesC.4} in \S\ref{p:basiccurvesC};
all other assumptions in the lemma
are clearly satisfied by our choice of~$\Tilde\Gamma$.}
and we obtain that, for any $t\in [0,1]$, the velocity field $\tilde u(t,\cdot)$
coincides with $u_1(t,\cdot)$ in the  rectangle
\[
\textstyle R \big( \Gamma_1(t),(0,\frac{1}{2}),\delta,\frac{2r}{\ell(t)} \big)
=
\big( -\frac{2r}{\ell(t)} , \frac{2r}{\ell(t)} \big)
\times \big( \frac{1}{2} - \delta, \frac{1}{2}+\delta \big) 
\, .
\]
It is, therefore, sufficient to show that $\tilde u(t,\cdot)$ is tangent to
$\big( -\frac{2r}{\ell(t)} , \frac{2r}{\ell(t)} \big) \times \{\frac{1}{2} \}$
for any~$t\in [0,1]$. This property follows
from the fact that the diffeomorphism $\tilde \Phi$ in~\eqref{e:param2},
associated to $\tilde \Gamma$, has the form
\[
\tilde \Phi(t,y) 
= \left( \frac{y}{\ell(t)} \, , \, \frac{1}{2} + s \ell(t) \right)
\, ,
\]
and from the construction in \S\ref{s:u}.
\end{parag}

\begin{parag}[Verification of Assumption~\ref{ass2} 
for $\boldsymbol{u_i}$ and $\boldsymbol{\rho_i}$]
\label{s:solse}
First, we recall that the velocity fields $u_1$ and $u_2$
are smooth and tangent to the boundary $\bd\Qone$, and in particular satisfy
Assumption~\ref{ass2}\ref{ass2.1}, while Assumption~\ref{ass2}\ref{ass2.2}
follows from Remark~\ref{r:boh}\ref{r:boh.4}.
Assumption~\ref{ass2}\ref{ass2.3} can be obtained by combining the fact
that the solutions $\rho_1$ and $\rho_2$ can be described in
purely geometric terms (see Remark~\ref{r:boh}\ref{r:boh.2})
and the quasi-self-similarity of the curves~$\Gamma_1$ and~$\Gamma_2$
in Condition~\ref{p:basiccurvesC.5} in \S\ref{p:basiccurvesC}.
\end{parag}

\begin{parag}[Verification of Assumption~\ref{ass3} for $\boldsymbol{u}$]
\label{s:regfin}
We first observe that,  according to the general scheme for a quasi-self
similar construction (see \S\ref{ss:ass}) and to the
specific combinatorial structure of our example (see \S\ref{ss:keycomb}),
for $T_n < t \leq T_{n+1}$ and on each of the tiles in $\Til_{1/(2 \cdot 5^n)}$,
the velocity field $u$ agrees with one of the velocity fields $u_1$ and $u_2$
constructed in~\S\ref{ss:contruC} after a rescaling, a translation and 
possible rotation.

We next check that $u$ is smooth on the entire plane $\R^2$.
This result will imply Assumption~\ref{ass3} for any $s$ and $p$.
The regularity in the interior of each square of the tiling
$\Til_{1/(2 \cdot 5^n)}$ follows from the regularity of $u_1$ and $u_2$.
We are, therefore, left with checking the regularity across the boundary
of $\Qone$ and at the interface between each
pair of squares in the tiling.

The combinatorial structure in \S\ref{ss:keycomb} guarantees that, 
at every time, the rotated and rescaled translations of the curves 
$\Gamma_1$ and $\Gamma_2$ never intersect the boundary of~$\Qone$. 
Indeed, they always intersect the boundary of a tile that is 
adjacent to another tile, but  never at the boundary of $\Qone$. 
Together with Property~\ref{ss:contruC.2} in \S\ref{ss:contruC}, 
this observation implies that the velocity field $u$ vanishes 
in a neighborhood of the boundary of $\Qone$, and therefore 
is regular there. 

It remains to prove the regularity of $u$ at the interface $\Sigma$ 
between two adjacent squares $Q'$ and $Q''$ of $\Til_{1/(2 \cdot 5^n)}$.
We call $\Gamma'$ and $\Gamma''$ the rotated and rescaled translations 
of the curves $\Gamma_1$ and/or $\Gamma_2$ which lie inside $Q'$ and 
$Q''$, respectively (see Figure~\ref{f:gluing}). 
We have two possibilities: either the curves $\Gamma'$ and $\Gamma''$ 
do not intersect the interface $\Sigma$, or at least one of them does. 
In the first case $u$ is zero in a neighborhood of $\Sigma$ (again 
from Property~\ref{ss:contruC.2} in \S\ref{ss:contruC}), 
and therefore there is nothing to prove. 
In the second case, the combinatorial structure in \S\ref{ss:keycomb}
guarantees that both $\Gamma'$ and~$\Gamma''$ intersect $\Sigma$ at 
its mid point, which we denote by $\tilde x$.

\begin{figure}[ht]
\begin{center}
  \includegraphics[scale=1]{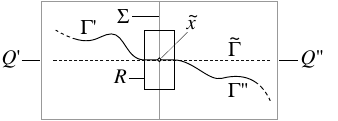}
  \caption{~}
  \label{f:gluing}
\end{center}
\end{figure}

Without loss of generality, we can always assume that the segment 
$\Sigma$ is vertical, as in Figure~\ref{f:gluing}.
Then we denote by $\widetilde\Gamma$ the horizontal straight line
passing through the mid point $\tilde x$ of $\Sigma$, parametrized by
$\tilde\gamma(s):=\tilde x+(s,0)$, and  by $\tilde u$ the
(autonomous) velocity field associated to the parametrization
$\tilde\gamma$ and the positive number $r/5^n$ as in \S\ref{s:u},
where~$r$ is chosen in \S\ref{ss:contruC}.
Moreover, we denote by $R$ the open rectangle centered at $\tilde x$
with width~$2\delta / 5^n$ and height $4r$
(see Figure~\ref{f:gluing} again).

We note that, by construction, the velocity field $u$ agrees in $Q'$
(up to a rotation, a translations and rescaling) with one of the basic 
velocity fields $u_1$ and $u_2$, and therefore it agrees with the 
velocity field associated to the parametrization of~$\Gamma'$ and 
the number $r/5^n$ as in \S\ref{s:u}.
Then Condition~\ref{p:basiccurvesC.4} in \S\ref{p:basiccurvesC} shows 
that we can apply Lemma~\ref{s:locality} and obtain that $u$ and 
$\tilde u$ agree in  $R\cap \Bar Q'$.
Since the same argument applies to the square $Q''$, we have that 
$u$ and $\tilde u$ agree on the whole rectangle $R$. 
Hence, $u$ is smooth as the restriction of $\tilde u$ to $R$.

We conclude the proof of the smoothness of $u$ (across the interface 
$\Sigma$) by noticing that, again by construction, $u$ vanishes in a 
neighborhood of the complement in $\Sigma$ of the vertical segment 
centered at $\tilde x$ with height $2r/5^n$. 
\end{parag}

\begin{parag}[Regularity in space of $\boldsymbol{\rho}$]
\label{s:regsolfin}
The same procedure just described in \S\ref{s:regfin} can be applied to 
the solution $\rho$, which is therefore smooth on the whole $\R^2$ for 
each time $t$.
\end{parag}

\begin{parag}[Regularity in time of $\boldsymbol{u}$ and $\boldsymbol{\rho}$]
\label{r:smoothC}
In \S\ref{s:regfin} and \S\ref{s:regsolfin} we have shown that $u$ and $\rho$
are smooth with respect to space. 
Moreover, they are also smooth in time in each time interval $[T_n,T_{n+1}]$.
This property follows from the regularity with respect to time of the basic 
curves $\Gamma_1(t)$ and $\Gamma_2(t)$.
We observe that $u$ and $\rho$ may fail to be more regular than continuous 
in time at $t=T_n$.
However,  we can apply the procedure described in~\S\ref{ss:timereg} to 
obtain a new velocity field and a new solution that are smooth on 
$\R^2\times [0,T_\infty)$.
\end{parag}

\medskip

What remains to be done is constructing two time-dependent curves 
$\Gamma_1, \Gamma_2$ on the time interval $I=[0,1]$ that satisfy 
Conditions~\ref{p:basiccurvesC.2}, \ref{p:basiccurvesC.3'}, 
\ref{p:basiccurvesC.3''}, \ref{p:basiccurvesC.4'}, and \ref{p:basiccurvesC.5}
in~\S\ref{p:basiccurvesC} and~\S\ref{r:equivalenceC} (and, therefore, 
also all~Conditions \ref{p:basiccurvesC.1}-\ref{p:basiccurvesC.5}). 

Due to the complexity that a rigorous construction would entail, we only 
give a precise description of the initial states $\Gamma_1(0)$, $\Gamma_2(0)$, 
and of the final states~$\Gamma_1(1)$,~$\Gamma_2(1)$ 
(see Figure~\ref{f:gamma-i+f}) and sketch some of the intermediate states 
(see Figures~\ref{f:gammauno}, \ref{f:finesteps}, and~\ref{f:gammadue}). 

\begin{figure}[ht]
\begin{center}
  \includegraphics[scale=1]{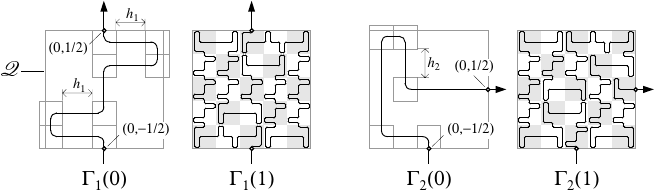}
  \caption{The curves $\Gamma_1$ and $\Gamma_2$ for $t=0$ and $t=1$.}
  \label{f:gamma-i+f}
\end{center}
\end{figure}

\begin{parag}[Construction of $\boldsymbol{\Gamma_1}$
and $\boldsymbol{\Gamma_2}$ for $\boldsymbol{t=0, \, t=1}$]
\label{s:costruzione-g}
We take $\Gamma_1$ and $\Gamma_2$ at the initial time $t=0$
and at the final time $t=1$ as in Figure~\ref{f:gamma-i+f}.
Note that the curves $\Gamma_1(0)$ and $\Gamma_2(0)$
have been obtained by modifying the curves
described in Figure~\ref{f:comb1} 
(which, we recall,  do not satisfy~\ref{p:basiccurvesC.3'} and 
\ref{p:basiccurvesC.4'} in~\S\ref{r:equivalenceC}).
We specify that:
%%%%%
\begin{itemize}
[leftmargin=30pt, itemsep=2pt]

\item
all the small squares drawn in the first and third picture
in Figure~\ref{f:gamma-i+f} have side-length $1/5$;

\item
outside these small squares the curves $\Gamma_1(0)$ and $\Gamma_2(0)$
consists of segments and half-lines;

\item
the intersections of $\Gamma_1(0)$ and $\Gamma_2(0)$
with the small squares agree up to reflections, rotations, 
translations, and scaling by a factor $1/5$ 
with a fixed curve~$\Gamma_0$ in the unit square $\Qone$; the curve $\Gamma_0$ is  
a ``modified quarter of circle'' such as, for example, 
the intersection of the curve $\Gamma_2(0)$ in Figure~\ref{f:comb1} 
with the unit square~$\Qone$;

\item
the parameters $h_1$ and $h_2$ used in the construction are chosen so that
$\Gamma_1(0)$ and $\Gamma_2(0)$ satisfy
~\ref{p:basiccurvesC.3'} and \ref{p:basiccurvesC.4'} in~\S\ref{r:equivalenceC};
in fact, we first choose $h_2$ so that~$\Gamma_2(0)$
satisfies the equal-area requirement of Condition~\ref{p:basiccurvesC.4'}
(we note that $\Gamma_1(0)$ will satisfy this last condition independently
of the choice of $h_1$ for symmetry reasons), and then
we choose $h_1$ so that $\Gamma_1(0)$ and $\Gamma_2(0)$ have
the same length in $\Qone$, that is,  \ref{p:basiccurvesC.3'} holds;%
\footnoteb
{For the consistency of this construction, we need for
the curves $\Gamma_1$ and $\Gamma_2$ to be  contained in 
$\Qone$ at time~$t=0$; that is, we need $h_1,h_2\le 0.3$. 
The curve $\Gamma_0$ mentioned above divides the unit square 
$\Qone$ in two connected components (see~Figure~\ref{f:comb1}). 
We denote by $a$ the area of the smallest one, and by $\ell$ 
the length of $\Gamma_0$. 
We can assume that $\pi/16 \le a \le 1/4$ and 
$\pi/4 \le \ell \le 1$. 
Under these assumptions, one can the check that
$0.225 \le h_1\le 0.264$ and $0.250 \le h_2 \le 0.261$. 
}

\item
finally, both $\Gamma_1(1)$ and $\Gamma_2(1)$ consist of $25$
copies of $\Gamma_1(0)$ and $\Gamma_2(0)$,
rotated, reflected, translated, and scaled by a factor $1/5$;
thus Condition~\ref{p:basiccurvesC.5} is met.
\end{itemize}
\end{parag}

\begin{parag}[Construction of $\boldsymbol{\Gamma_1(t)}$ for
$\boldsymbol{0 < t <1}$]
\label{s:costruzione-g1}
The transition from the curve $\Gamma_1(0)$ to the curve $\Gamma_1(1)$
is given in Figure~\ref{f:gammauno}.
\begin{figure}[h]
\begin{center}
  \includegraphics[scale=1]{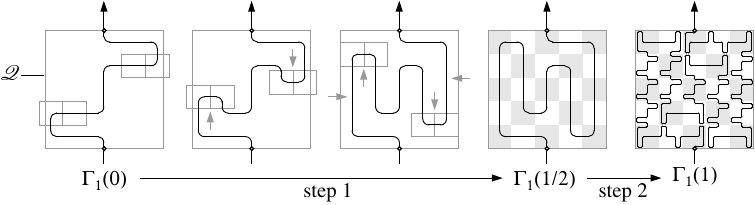}
  \caption{The evolution of $\Gamma_1$ from $t=0$ to $t=1$ in two steps.}
  \label{f:gammauno}
\end{center}
\end{figure}

More precisely, we specify that:
%%%%%
\begin{itemize}
[leftmargin=30pt, itemsep=2pt]

\item
the small squares drawn in the fourth picture have side-length $1/5$, 
and within each of these
squares, $\Gamma_1(1/2)$ agrees with one of the modified quarter
circles used in~\S\ref{s:costruzione-g};

\item
the first step is actually divided in two parts: first, we only move 
the small squares vertically and create the ``kinks'' that appear
in the second square in Figure~\ref{f:gammauno}, then
we keep elongating these kinks with a vertical motion,
but we also add an horizontal motion
(as indicated by the arrows) so that, by time $t=1/2$, 
the small squares are inside the big one;

\item
in the second step, we modify $\Gamma_1(1/2)$
within each square of $\Til_{1/5}$ using one of ``moves'' described
in Figure~\ref{f:finesteps};
\begin{figure}[h]
\begin{center}
  \includegraphics[scale=1]{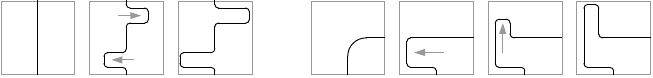}
  \caption{Details of the moves used in the second step
           of the evolution of $\Gamma_1$.}
  \label{f:finesteps}
\end{center}
\end{figure}

\item
it is clear that $\Gamma_1$ satisfies~\ref{p:basiccurvesC.3''}, that is,
$\ell_1(t)$ is increasing in $t$ for $0 \le t \le 1$, 
provided that the horizontal and vertical motions employed in 
the first step are suitably synchronized;

\item
$\Gamma_1$ satisfies the equal-area requirement, Condition ~\ref{p:basiccurvesC.4'},
during the first step because of the symmetry of the evolution;

\item
$\Gamma_1$ satisfies the equal-area requirement, Condition~\ref{p:basiccurvesC.4'},
also during the second step: we remark indeed that
every move of the first type in Figure~\ref{f:finesteps} preserves
the equal-area condition, while the moves of the second type
can be coupled so that to each move that increases the area of
one connected component of~$\Qone\setminus\Gamma_1(t)$
(in some square of $\Til_{1/5}$) corresponds a move that decreases
the area of that component (in another square) by the same amount.
\end{itemize}
\end{parag}

\begin{parag}[Construction of $\boldsymbol{\Gamma_2(t)}$
for $\boldsymbol{0 < t <1}$]
\label{s:costruzione-g2}
The transition from the curve $\Gamma_2(0)$ to the curve $\Gamma_2(1)$
is described in Figure~\ref{f:gammadue}.

\begin{figure}[h]
\begin{center}
  \includegraphics[scale=1]{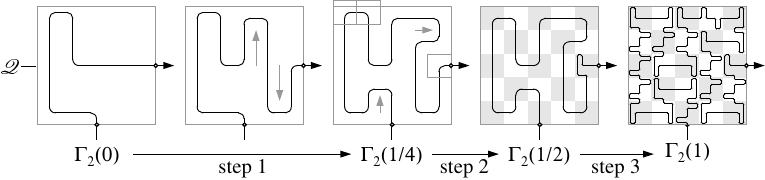}
  \caption{The evolution of $\Gamma_2$ from $t=0$ to $t=1$ in three steps.}
  \label{f:gammadue}
\end{center}
\end{figure}

More precisely, we specify that:
%%%%%
\begin{itemize}
[leftmargin=30pt, itemsep=2pt]

\item
the small squares drawn in the third picture have side-length
$1/5$, and within each of these squares $\Gamma_2(1/4)$ agrees
with one of the ``modified quarter circles'' used
in~\S\ref{s:costruzione-g};

\item
it is clear that ~\ref{p:basiccurvesC.3''} is satisfied
during the entire evolution;

\item
the equal-area requirement~\ref{p:basiccurvesC.4'} is
satisfied during the first step of this evolution
for reasons of symmetry;

\item
most of $\Gamma_2$ is kept fixed by the evolution in the second step,
except the part contained in the two squares in the top-left corner
of $\Qone$, which moves downward, and the part contained in
the middle square on the right side, which evolves according to
the second move in Figure~\ref{f:finesteps};
the equal-area requirement~\ref{p:basiccurvesC.4'} is satisfied at time $t=1/2$,
and is satisfied also for the intermediate times $1/4 < t < 1/2$,
provided that these two moves are suitably synchronized by a change of
time for one of the two basic moves;

\item
in the third step, we modify $\Gamma_2(1/2)$
within each square in $\Til_{1/5}$ (except the middle square on
in the right side of $\Qone$) using one of the moves described
in Figure~\ref{f:finesteps}; note that this step satisfies the
equal-area requirement~\ref{p:basiccurvesC.4'} for the same 
reasons as in  the last step of the evolution of $\Gamma_1$.
\end{itemize}
\end{parag}

%
%	BIBLIOGRAPHY
%
%\bibliography{ACMmixing}

\begin{thebibliography}{10}

\bibitem{ABC1}
G.~Alberti, S.~Bianchini, and G.~Crippa.
\newblock Structure of level sets and {S}ard-type properties of {L}ipschitz
  maps.
\newblock {\em Ann. Sc. Norm. Super. Pisa Cl. Sci. (5)}, 12(4):863--902, 2013.

\bibitem{ABC2}
G.~Alberti, S.~Bianchini, and G.~Crippa.
\newblock A uniqueness result for the continuity equation in two dimensions.
\newblock {\em J.~Eur. Math. Soc. (JEMS)}, 16(2):201--234, 2014.

\bibitem{ACM}
G.~Alberti, G.~Crippa, and A.~L. Mazzucato.
\newblock Exponential self-similar mixing and loss of regularity for continuity
  equations.
\newblock {\em C. R. Math. Acad. Sci. Paris}, 352(11):901--906, 2014.

\bibitem{loss}
G.~Alberti, G.~Crippa, and A.~L. Mazzucato.
\newblock Loss of regularity for continuity equations with non-{L}ipschitz
  velocity.
\newblock 2018.
\newblock Preprint. arXiv:1802.02081.

\bibitem{Amb}
L.~Ambrosio.
\newblock Transport equation and {C}auchy problem for {BV} vector fields.
\newblock {\em Invent. Math.}, 158(2):227--260, 2004.

\bibitem{HW}
L.~Ambrosio and G.~Crippa.
\newblock Continuity equations and {ODE} flows with non-smooth velocity.
\newblock {\em Proc. Roy. Soc. Edinburgh Sect. A}, 144(6):1191--1244, 2014.

\bibitem{Aref}
H.~Aref.
\newblock Stirring by chaotic advection.
\newblock {\em J.~Fluid Mech.}, 143:1--21, 1984.

\bibitem{chemin}
H.~Bahouri, J.-Y. Chemin, and R.~Danchin.
\newblock {\em Fourier analysis and nonlinear partial differential equations}.
\newblock Grundlehren der Mathematischen Wissenschaften, 343. Springer-Verlag,
  Berlin-Heidelberg, 2011.

\bibitem{BMV}
J.~Bedrossian, N.~Masmoudi, and V.~Vicol.
\newblock Enhanced dissipation and inviscid damping in the inviscid limit of
  the {N}avier-{S}tokes equations near the two dimensional {C}ouette flow.
\newblock {\em Arch. Ration. Mech. Anal.}, 219(3):1087--1159, 2016.

\bibitem{bergh}
J.~Bergh and J.~L{\"o}fstr{\"o}m.
\newblock {\em Interpolation spaces. {A}n introduction}.
\newblock Grundlehren der Mathematischen Wissenschaften, 223. Springer-Verlag,
  Berlin-Heidelberg, 1976.

\bibitem{bonicatto}
S.~Bianchini and P.~Bonicatto.
\newblock A uniqueness result for the decomposition of vector fields
  in~${{\mathbf {R}}^d}$.
\newblock 2017.
\newblock Preprint SISSA.

\bibitem{BCCLV00}
G.~Boffetta, A.~Celani, M.~Cencini, G.~Lacorata, and A.~Vulpiani.
\newblock Nonasymptotic properties of transport and mixing.
\newblock {\em Chaos}, 10(1):50--60, 2000.

\bibitem{BC}
F.~Bouchut and G.~Crippa.
\newblock Lagrangian flows for vector fields with gradient given by a singular
  integral.
\newblock {\em J.~Hyperbolic Differ. Equ.}, 10(2):235--282, 2013.

\bibitem{bresch}
D.~Bresch and P.-E. Jabin.
\newblock Global existence of weak solutions for compresssible
  {N}avier--{S}tokes equations: Thermodynamically unstable pressure and
  anisotropic viscous stress tensor.
\newblock {\em Ann. of Math.}, 188:577--684, 2018.

\bibitem{bressan}
A.~Bressan.
\newblock A lemma and a conjecture on the cost of rearrangements.
\newblock {\em Rend. Sem. Mat. Univ. Padova}, 110:97--102, 2003.

\bibitem{sharploss}
E.~Bru\'e and Q.-H. Nguyen.
\newblock Sharp regularity estimates for solutions of the continuity equation
  drifted by {S}obolev vector fields.
\newblock 2018.
\newblock Preprint. arXiv:1806.03466.

\bibitem{ferruccio}
F.~Colombini, T.~Luo, and J.~Rauch.
\newblock Nearly {L}ipschitzean divergence-free transport propagates neither
  continuity nor {BV} regularity.
\newblock {\em Commun. Math. Sci.}, 2(2):207--212, 2004.

\bibitem{CKRZ}
P.~Constantin, A.~Kiselev, L.~Ryzhik, and A.~Zlato{\v{s}}.
\newblock Diffusion and mixing in fluid flow.
\newblock {\em Ann. of Math. (2)}, 168(2):643--674, 2008.

\bibitem{CDL}
G.~Crippa and C.~De~Lellis.
\newblock Estimates and regularity results for the {D}i{P}erna-{L}ions flow.
\newblock {\em J.~Reine Angew. Math.}, 616:15--46, 2008.

\bibitem{schulze}
G.~Crippa and C.~Schulze.
\newblock Cellular mixing with bounded palenstrophy.
\newblock {\em Math. Models Methods Appl. Sci.}, 27(12):2297--2320, 2017.

\bibitem{depauw}
N.~Depauw.
\newblock Non unicit\'e des solutions born\'ees pour un champ de vecteurs {BV}
  en dehors d'un hyperplan.
\newblock {\em C. R. Math. Acad. Sci. Paris}, 337(4):249--252, 2003.

\bibitem{valdinoci}
E.~Di~Nezza, G.~Palatucci, and E.~Valdinoci.
\newblock Hitchhiker's guide to the fractional {S}obolev spaces.
\newblock {\em Bull. Sci. Math.}, 136(5):521--573, 2012.

\bibitem{DPL}
R.~J. DiPerna and P.-L. Lions.
\newblock Ordinary differential equations, transport theory and {S}obolev
  spaces.
\newblock {\em Invent. Math.}, 98(3):511--547, 1989.

\bibitem{FCS14}
D.~P.~G. Foures, C.~P. Caulfield, and P.~J. Schmid.
\newblock Optimal mixing in two-dimensional plane {P}oiseuille flow at finite
  {P}\'eclet number.
\newblock {\em J.~Fluid Mech.}, 748:241--277, 2014.

\bibitem{GW12}
T.~Gotoh and T.~Watanabe.
\newblock Scalar flux in a uniform mean scalar gradient in homogeneous
  isotropic steady turbulence.
\newblock {\em Phys.~D}, 241(3):141--148, 2012.

\bibitem{GDTR09}
E.~Gouillart, O.~Dauchot, J.-L. Thiffeault, and S.~Roux.
\newblock Open-flow mixing: Experimental evidence for strange eigenmodes.
\newblock {\em Phys. Fluids}, 21(2):023603, 2009.

\bibitem{grafakos}
L.~Grafakos.
\newblock {\em Modern {F}ourier analysis}.
\newblock Graduate Texts in Mathematics, 250. Springer-Verlag, New York, third
  edition, 2014.

\bibitem{ikx}
G.~Iyer, A.~Kiselev, and X.~Xu.
\newblock Lower bounds on the mix norm of passive scalars advected by
  incompressible enstrophy-constrained flows.
\newblock {\em Nonlinearity}, 27(5):973--985, 2014.

\bibitem{Jab15}
P.-E. Jabin.
\newblock Critical non-{S}obolev regularity for continuity equations with rough
  velocity fields.
\newblock {\em J.~Differential Equations}, 260(5):4739--4757, 2016.

\bibitem{Jul03}
M.-C. Jullien.
\newblock Dispersion of passive tracers in the direct enstrophy cascade:
  Experimental observations.
\newblock {\em Phys. Fluids}, 15(8):2228--2237, 2003.

\bibitem{JCT00}
M.-C. Jullien, P.~Castiglione, and P.~Tabeling.
\newblock Experimental observation of {B}atchelor dispersion of passive
  tracers.
\newblock {\em Phys. Rev. Lett.}, 85(17):3636--3639, 2000.

\bibitem{KXchemo}
A.~{Kiselev} and X.~{Xu}.
\newblock Suppression of chemotactic explosion by mixing.
\newblock {\em Arch. Ration. Mech. Anal.}, 222(2):1077--1112, 2016.

\bibitem{leger}
F.~L\'eger.
\newblock A new approach to bounds on mixing.
\newblock {\em Math. Models Methods Appl. Sci.}, 2018.
\newblock In press.

\bibitem{doering}
Z.~Lin, J.-L. Thiffeault, and C.~R. Doering.
\newblock Optimal stirring strategies for passive scalar mixing.
\newblock {\em J.~Fluid Mech.}, 675:465--476, 2011.

\bibitem{Liu08}
W.~Liu.
\newblock Mixing enhancement by optimal flow advection.
\newblock {\em SIAM J.~Control Optim.}, 47(2):624--638, 2008.

\bibitem{liverani}
C.~Liverani.
\newblock On contact {A}nosov flows.
\newblock {\em Ann. of Math. (2)}, 159(3):1275--1312, 2004.

\bibitem{llnmd}
E.~Lunasin, Z.~Lin, A.~Novikov, A.~L. Mazzucato, and C.~R. Doering.
\newblock Optimal mixing and optimal stirring for fixed energy, fixed power, or
  fixed palenstrophy flows.
\newblock {\em J.~Math. Phys.}, 53(11):115611, 2012.

\bibitem{MMGVP}
G.~Mathew, I.~Mezi{\'c}, S.~Grivopoulos, U.~Vaidya, and L.~Petzold.
\newblock Optimal control of mixing in {S}tokes fluid flows.
\newblock {\em J.~Fluid Mech.}, 580:261--281, 2007.

\bibitem{MMP}
G.~Mathew, I.~Mezic, and L.~Petzold.
\newblock A multiscale measure for mixing.
\newblock {\em Phys.~D}, 211(1-2):23--46, 2005.

\bibitem{Ottino}
J.~M. Ottino.
\newblock {\em The kinematics of mixing: stretching, chaos, and transport}.
\newblock Cambridge Texts in Applied Mathematics. Cambridge University Press,
  Cambridge, 1989.

\bibitem{RHG99}
D.~Rothstein, E.~Henry, and J.~P. Gollub.
\newblock Persistent patterns in transient chaotic fluid mixing.
\newblock {\em Nature}, 401(6755):770--772, 1999.

\bibitem{seis}
C.~Seis.
\newblock Maximal mixing by incompressible fluid flows.
\newblock {\em Nonlinearity}, 26(12):3279--3289, 2013.

\bibitem{taylor}
M.~Taylor.
\newblock Equivalence of {E}uclidean and toral {S}obolev norms.
\newblock Private communication, 2016.

\bibitem{triebel}
H.~Triebel.
\newblock {\em Theory of function spaces}.
\newblock Monographs in Mathematics, 78. Birkh\"auser Verlag, Basel, 1983.

\bibitem{zlatos}
Y.~Yao and A.~Zlato\v{s}.
\newblock Mixing and un-mixing by incompressible flows.
\newblock {\em J. Eur. Math. Soc. (JEMS)}, 19(7):1911--1948, 2017.

\bibitem{zillinger}
C.~Zillinger.
\newblock On geometric and analytic mixing scales: comparability and
  convergence rates for transport problems.
\newblock 2018.
\newblock Preprint. arXiv:1804.11299.

\end{thebibliography}
\bibliographystyle{plain}

\end{document}